\documentclass[reqno,12pt]{amsart}
\usepackage{bookmark}
 \oddsidemargin 3mm
 \evensidemargin 3mm
 \headheight=18pt
 \setlength{\topmargin}{-0.3cm}
\textheight 22cm
 \textwidth 16cm

\begin{document}

\bibliographystyle{amsplain}

\newtheorem{theorem}{Theorem}[section]
\newtheorem{prop}[theorem]{Proposition}
\newtheorem{lemma}[theorem]{Lemma}
\newtheorem{definition}[theorem]{Definition}
\newtheorem{corollary}[theorem]{Corollary}
\newtheorem{example}[theorem]{Example}
\newtheorem{remark}[theorem]{Remark}
\newcommand{\ra}{\rightarrow}
\numberwithin{equation}{section}

\newcommand{\norm}[1]{\lVert#1\rVert}
\newcommand{\Ag}[1]{\langle#1\rangle}
\setlength{\parskip}{.5em}

\def\xint#1{\mathchoice
  {\xxint\displaystyle\textstyle{#1}}%
  {\xxint\textstyle\scriptstyle{#1}}%
  {\xxint\scriptstyle\scriptscriptstyle{#1}}%
  {\xxint\scriptscriptstyle\scriptscriptstyle{#1}}%
  \!\int}
\def\xxint#1#2#3{{\setbox0=\hbox{$#1{#2#3}{\int}$}
  \vcenter{\hbox{$#2#3$}}\kern-.5\wd0}}
\def\ddashint{\xint=}
\def\dashint{\xint-}

\def \RD {{\mathbb R}^d}
\def \RM {{\mathbb R}^m}
\def \HS {H^1_{w}({\mathbb R}^n)}
\def\HSL  {H^1_{L,w}({\mathbb R}^n)}
\def \HL {H^1_{L,w}({\mathbb R}^n)}
\def \HHL {\mathbb{H}^1_{L,w}({\mathbb R}^n)}
\def \LP {L^p_w}
\def\ls{\lesssim}
\def\e{\varepsilon}
\def\brd{\mathbb{R}^d}

\title[Oscillatory integrals and homogenization]{Oscillatory integrals and periodic homogenization of Robin boundary value problems}

\thanks{{\it Key words and phrase:} Oscillatory integrals; Robin boundary condition; Homogenization.}

\author[J. Geng, and J. Zhuge]
{Jun Geng \qquad Jinping Zhuge}

\subjclass[2010]{35B27, 42B20}

\begin{abstract}
In this paper, we consider a family of second-order elliptic systems subject to a periodically oscillating Robin boundary condition. We establish the qualitative homogenization theorem on any Lipschitz domains satisfying a non-resonance condition. We also use the quantitative estimates of oscillatory integrals to obtain the dimension-dependent convergence rates in $L^2$, assuming that the domain is smooth and strictly convex.
 \end{abstract}

 \medskip

\maketitle

\tableofcontents

\section {Introduction}
\setcounter{equation}{0}

The primary purpose of this paper is to study a family of elliptic systems with oscillating Robin boundary condition in a bounded Lipschitz domain $\Omega\subset \RD, d\ge 3$, 
\begin{equation}\label{Robin}
\left\{
\aligned
\mathcal{L}_\e (u_\e) &= F \quad &\text{ in }& \Omega,\\
\frac{\partial u_\varepsilon}{\partial \nu_\varepsilon}+b(x/\varepsilon)u_\e  &=g \quad &\text{ on }& \partial\Omega,
\endaligned
\right.
\end{equation}
where 
\begin{align}\label{e1.1}
\mathcal{L}_\varepsilon =-{\rm div}\bigg[A\Big(\frac{x}{\varepsilon}\Big)\nabla\bigg] =-\frac{\partial}{\partial x_i}
\bigg[a_{ij}^{\alpha\beta}\Big(\frac{x}{\varepsilon}\Big)
\frac{\partial}{\partial x_j}\bigg], \qquad \varepsilon>0,
\end{align}
and $u_\e = (u_\e^1,u_\e^2, \cdots, u_\e^m)$ and $1\le \alpha, \beta \le m, 1\le i,j \le d$. The summation convention is used throughout this paper. We assume that the coefficients $A = (a^{\alpha\beta}_{ij})$ and $b = (b^{\alpha\beta})$ satisfy the following conditions
\begin{itemize}
	\item Ellipticity condition: there exists $\mu>0$ such that
	\begin{equation}\label{cond.ellipticity}
	\mu|\xi|^2\leq a_{ij}^{\alpha\beta}(y)\xi_i^{\alpha}\xi_j^\beta\leq \mu^{-1} |\xi|^2,
	\qquad  {\rm for}  \ y\in \RD, \xi=(\xi_i^\alpha)\in \mathbb{R}^{m\times d},
	\end{equation}
	and
	\begin{equation}\label{cond.elliptic.b}
	\mu|\eta|^2\leq b^{\alpha\beta}(y)\eta^{\alpha}\eta^\beta\leq \mu^{-1} |\eta|^2,
	\qquad  {\rm for}  \ y\in \RD, \eta=(\eta^\alpha)\in \RM.
	\end{equation}
	\item Periodicity condition: for any $y\in \RD, z\in {\mathbb Z}^d$,
	\begin{equation}\label{cond.periodicity}
	A(y+z)=A(y),
	\end{equation}
	and
	\begin{equation}\label{cond.periodicity.b}
	b(y+z) = b(y).
	\end{equation}
	\item Continuity: 
	\begin{equation}\label{cond.continuity}
	\text{The coefficient matrix } b \text{ is continuous in } \RD.
	\end{equation}
\end{itemize}

The equation (\ref{Robin}) with $m = 1$ may be used to model, for example, the heat conductivity problems of heterogeneous materials surrounded by a certain fluid. On the surface of the material, a Robin boundary condition is needed due to the convection or phase transition. Actually, a general convective boundary condition is described as
\begin{equation}\label{cond.convective}
\text{heat flux} = b(T - T_{\text{liquid}}), \qquad \text{on } \partial\Omega,
\end{equation}
where $T$ is the temperature of the material surface, $T_{\text{liquid}}$ is the temperature of the fluid and $b$ is known as the heat transfer coefficient established by measurements (depending both on the material and liquid). Under the conditions (\ref{cond.ellipticity}) -- (\ref{cond.continuity}) and assuming $T_{\text{liquid}}$ is a constant and $u_\e = T -  T_{\text{liquid}}$, we know the heat flux is given by $-\frac{\partial u_\e}{\partial \nu_\e}$ and $b$ takes a form of $b(x/\e)$, which is periodic. Therefore, the boundary condition (\ref{cond.convective}) may be rewritten as
\begin{equation*}
\frac{\partial u_\e}{\partial \nu_\e} + b(x/\e) u_\e = 0.
\end{equation*}
If $T_{\text{liquid}}$ is not a constant and could be measured near the material surface, then $(\ref{cond.convective})$ may be reduced to a nontrivial oscillating Robin boundary data that takes a form of $g(x,x/\e)$ and $g(x,y)$ is 1-periodic with respect to $y$. This case may also be handled by the method in this paper.

Recall that the variational form of equation (\ref{Robin}) may be written as
\begin{equation}\label{eq.variation}
\int_{\Omega} A(x/\e) \nabla u_\e \cdot \nabla \phi + \int_{\partial\Omega} b(x/\e) u_\e \cdot \phi = \int_{\partial\Omega} g \cdot \phi + \int_{\Omega} F \cdot \phi,
\end{equation}
for any $\phi \in H^1(\Omega;\RM)$. Then, the Lax-Milgram Theorem implies the existence and uniqueness of the weak solution of (\ref{Robin}), provided $F\in H^{-1}(\Omega;\RM)$ and $g\in H^{-1/2}(\partial\Omega;\RM)$. Moreover, the sequence $\{ u_\e: \e>0 \}$ is uniformly bounded in $H^1(\Omega;\RM)$. Now, we may ask a standard question in homogenization theory: as $\e \to 0$, do the solutions $u_\e$ converge to some function $u_0$ in $H^1(\Omega;\RM)$, while $u_0$ is the weak solution of the effective equation with a certain effective Robin boundary condition? It turns out that the answer to this question is quite different from the usual Dirichlet or Neumann boundary value problems, due to the oscillating factor of $b(x/\e)$ on the boundary. We mention that the Dirichlet and Neumann problems have been well-studied in many literatures and we refer to a recent excellent monograph \cite{ShenBook} by Z. Shen.

The purpose of this paper is two-fold: (i) prove the qualitative homogenization theorem under very weak assumptions on the coefficients and domain; (ii) obtain the $L^2$ convergence rates under sufficiently strong assumptions.

\subsection{Qualitative homogenization}
Our method of proving the qualitative homogenization theorem is based on the direct asymptotic analysis of the variational equation (\ref{eq.variation}). In view of the second integral on the left-hand side of (\ref{eq.variation}), the key for homogenization to take place is the asymptotic behavior of the following oscillatory integral
\begin{equation}\label{eq.OscInt}
\int_S f(x, \lambda x) d\sigma(x), \qquad \text{as }\lambda \to \infty, 
\end{equation}
where $S = \partial\Omega$, $d\sigma$ is the surface measure on $S$ (namely, $(d-1)$-dimensional Hausdoff measure) and $f(x,y)$ is a 1-periodic function in $y\in \RD$. The asymptotic behavior of the integral (\ref{eq.OscInt}) may be derived through the classical oscillatory integral theory if $S$ is smooth and possesses certain geometric conditions. For example, if $S$ is smooth and strictly convex and $f$ is smooth, then (\ref{eq.OscInt}) converges with error $O(\lambda^{\frac{1-d}{2}})$ as $\lambda \to \infty$. Recently, S. Kim and K.-A. Lee showed in \cite{KLS17} that if $S$ is a $C^1$ surface and satisfies a non-resonance condition (which is called irrational direction dense condition in \cite{KLS17}), then the integral (\ref{eq.OscInt}) converges qualitatively as $\lambda \to 0$. In this paper, we further weaken the regularity of $S$ to any Lipschitz surfaces, which is sharp in the sense that $d\sigma$ is well-defined and the following non-resonance condition makes sense.

\begin{definition}\label{def.non-resonance}
	Let $S$ be a Lipschitz surface. We say $S$ satisfies the non-resonance condition with respect to $\mathbb{Z}^d$ (or simply non-resonance condition), if
	\begin{equation*}
	\sigma( \{x\in S: n(x) \text{ is well-defined and } n(x) \in \mathbb{RZ}^d \} ) = 0.
	\end{equation*}
	Here $n(x)$ is the unit normal to $S$ at $x$. (A vector $a\in \RD$ is called rational if $a\in \mathbb{R} \mathbb{Z}^d$.)
\end{definition}

The non-resonance condition in Definition \ref{def.non-resonance}, generalizing the notion in \cite{KLS17}, covers all the classes of domains people innovated to deal with PDEs subject to periodically oscillating boundary value, including strictly convex smooth domains \cite{GerMas12,AKMP16,ShenZhuge16,ASS15}, finite-type domains \cite{Zhuge18}, polygonal domains \cite{GerMas11,ASS14} and more \cite{Fe14,CK14} (also see Remark \ref{rmk.layered} for a possible extension for layered or directional materials). Under this non-resonance condition, we are able to prove Theorem \ref{thm.mainOscInt}, a (qualitative) version of the Weyl's equidistribution theorem, based on the fundamental ideas from \cite{GerMas12} and \cite{KLS17}. Among many potential applications of the Weyl's equidistribution theorem in analysis and PDEs (see, e.g., Theorem \ref{thm.Laplace.L2}), we prove the qualitative homogenization theorem for (\ref{Robin}).
\begin{theorem}\label{thm.main}
	Let $(A,b)$ satisfy (\ref{cond.ellipticity}) - (\ref{cond.continuity}). Assume $\Omega$ is a bounded Lipschitz domain satisfying the non-resonance condition with respect to $\mathbb{Z}^d$. Suppose that $u_\e$ is the weak solution of (\ref{Robin})
	with $F\in H^{-1}(\Omega;\RM)$ and $g\in H^{-1/2}(\partial\Omega;\RM)$. Then, there exists some function $u_0\in H^1(\Omega;\RM)$ such that, as $\e \to 0$,
	\begin{equation}\label{eq.ueTOu0}
	\left\{
	\aligned
	u_\e &\rightarrow u_0  &\quad &\text{weakly in } H^1(\Omega;\RM) \text{ and strongly in } L^2(\Omega;\RM), \\
	A_\e(x/\e)  \nabla u_\e & \rightarrow \widehat{A} \nabla u_0 & \quad & \text{weakly in } L^2(\Omega;\mathbb{R}^{m\times d}),
	\endaligned
	\right.
	\end{equation} 
	where $\widehat{A}$ is the homogenized coefficient matrix. Moreover, $u_0$ is the weak solution of
	\begin{equation}\label{eq.A0u}
	\left\{
	\aligned
	-{\rm div}(\widehat{A} \nabla u_0) &= F &\quad& \text{ in } \Omega,\\
	\frac{\partial u_0}{\partial \nu_0}+\bar{b} u_0 & =g &\quad &\text{ on } \partial\Omega,
	\endaligned
	\right.
	\end{equation}
	where $\frac{\partial}{\partial \nu_0} = n\cdot \widehat{A} \nabla $ and $\bar{b} = \int_{\mathbb{T}^d} b(y)dy$.
\end{theorem}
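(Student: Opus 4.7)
The plan is to apply a compactness argument to the weak formulation (\ref{eq.variation}), use classical periodic-homogenization machinery for the interior flux, and invoke the Weyl-type equidistribution result (Theorem \ref{thm.mainOscInt}) to pass to the limit in the boundary integral. The only genuinely new ingredient compared with standard Dirichlet/Neumann homogenization is the surface term $\int_{\partial\Omega} b(x/\varepsilon) u_\varepsilon \cdot \phi\, d\sigma$, which must be handled by tools from oscillatory-integral theory rather than by compensated compactness.

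First I would establish compactness: testing (\ref{eq.variation}) with $\phi=u_\varepsilon$ and using the coercivity hypotheses (\ref{cond.ellipticity})--(\ref{cond.elliptic.b}) gives a uniform bound $\|u_\varepsilon\|_{H^1(\Omega)}\le C$. Extracting a subsequence (still indexed by $\varepsilon$), I obtain $u_\varepsilon\rightharpoonup u_0$ weakly in $H^1(\Omega;\RM)$, strongly in $L^2(\Omega;\RM)$ by Rellich, and strongly in $L^2(\partial\Omega;\RM)$ by compactness of the trace map on Lipschitz domains. Next, the classical Tartar oscillating-test-function construction, which depends only on the interior equation and is untouched by the Robin datum, delivers $A(x/\varepsilon)\nabla u_\varepsilon\rightharpoonup \widehat{A}\nabla u_0$ weakly in $L^2(\Omega;\mathbb{R}^{m\times d})$, with $\widehat{A}$ the usual homogenized matrix defined through the standard cell problems.

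The crux is the passage to the limit in the boundary integral. I would split
\begin{equation*}
\int_{\partial\Omega} b(x/\varepsilon)\,u_\varepsilon\!\cdot\!\phi\, d\sigma
= \int_{\partial\Omega} b(x/\varepsilon)(u_\varepsilon-u_0)\!\cdot\!\phi\, d\sigma
+ \int_{\partial\Omega} b(x/\varepsilon)\,u_0\!\cdot\!\phi\, d\sigma.
\end{equation*}
The first term is bounded by $\|b\|_\infty\|u_\varepsilon-u_0\|_{L^2(\partial\Omega)}\|\phi\|_{L^2(\partial\Omega)}$ and vanishes by the compact trace above. For the second, Theorem \ref{thm.mainOscInt} applied with $f(x,y)=b(y)$ gives
\begin{equation*}
\int_{\partial\Omega} b(x/\varepsilon)\,\psi(x)\,d\sigma \longrightarrow \bar b\int_{\partial\Omega} \psi(x)\,d\sigma
\end{equation*}
for every $\psi\in C(\partial\Omega)$; by density of $C(\partial\Omega)$ in $L^1(\partial\Omega)$ together with the uniform bound $\|b(\cdot/\varepsilon)\|_\infty \le \|b\|_\infty$, this extends to every $\psi\in L^1(\partial\Omega)$. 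Taking $\psi=u_0\cdot\phi\in L^1(\partial\Omega)$ (as the product of two $L^2(\partial\Omega)$ traces) yields the desired limit $\bar b\int_{\partial\Omega} u_0\cdot\phi\, d\sigma$.

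Collecting these limits in (\ref{eq.variation}) produces exactly the weak formulation of the effective problem (\ref{eq.A0u}), and unique solvability of that problem by Lax--Milgram promotes the subsequential limit to full convergence of $\{u_\varepsilon\}_{\varepsilon>0}$. The main obstacle throughout is Step~3: because the oscillating coefficient $b(x/\varepsilon)$ lives only on a $(d-1)$-dimensional Lipschitz surface, the usual volume-based two-scale and div-curl arguments are powerless there, and the non-resonance hypothesis on $\partial\Omega$---funneled through the surface Weyl equidistribution theorem---is precisely what compensates for the missing transverse averaging.
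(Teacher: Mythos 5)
Your proof is correct and follows the same overall strategy as the paper: standard periodic homogenization (Tartar's oscillating test functions, compensated compactness) for the interior, compact trace to kill the $b(x/\varepsilon)(u_\varepsilon-u_0)$ term, and the equidistribution result to handle $\int_{\partial\Omega}b(x/\varepsilon)u_0\cdot\phi\,d\sigma$. The paper actually derives Theorem~\ref{thm.main} as a corollary of the more general Theorem~\ref{thm.Quali.Homo}, which treats a sequence $\{A_\ell,b_\ell\}$ of coefficients and therefore needs the uniform rate in Theorem~\ref{thm.rateXY} (obtained from compactness of $\{b_\ell u\cdot\phi\}$ in $L^1(\partial\Omega;C(\mathbb{T}^d))$ via Arzel\`a--Ascoli); for a single fixed $b$ your direct appeal to Theorem~\ref{thm.mainOscInt} (indeed, Theorem~\ref{thm.Ergodic.S} with $g=b^{\alpha\beta}$, $f=u_0^\beta\phi^\alpha\in L^1(\partial\Omega)$ already suffices, making your density step unnecessary but harmless) is simpler and perfectly adequate. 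Both routes hinge on the same two ingredients---the compactness of the trace $H^1(\Omega)\hookrightarrow L^2(\partial\Omega)$ and the Weyl-type equidistribution on $\partial\Omega$ under the non-resonance condition---so this is essentially the paper's argument specialized to the non-sequential setting.
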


Theorem \ref{thm.main} is a straightforward corollary of Theorem \ref{thm.Quali.Homo} which is proved in a more general setting. Note that the effective heat transfer coefficient in (\ref{eq.A0u}) is the average of $b$. This fact may be extended without any real difficulty to more general Robin boundary conditions, such as
\begin{equation*}
\frac{\partial u_\e}{\partial \nu_\e} + b(x,x/\e) u_\e = g(x,x/\e),
\end{equation*}
where $b(x,y)$ and $g(x,y)$ are $1$-periodic with respect to $y$. In this case, the homogenized boundary condition reads
\begin{equation*}
\frac{\partial u_0}{\partial \nu_0} + \bar{b}(x) u_0 = \bar{g}(x),
\end{equation*}
where $\bar{b}(x) = \int_{\mathbb{T}^d} b(x,y)dy$ and $\bar{g}(x) = \int_{\mathbb{T}^d} g(x,y)dy$.


\subsection{Convergence rates}
The second part of the paper is concerned with the convergence rates of $u_\e$ to $u_0$ in $L^2(\Omega;\RM)$, which seems to be of more interest. The sharp convergence rates for the Dirichlet and Neumann problems of elliptic systems 
have been well-studied by a standard framework; see, e.g., \cite{CDG08,Sus1,Sus2,SZ17}. In particular, it has been established for both the Dirichlet and Neumann problems that
\begin{equation*}
\norm{u_\e - u_0}_{L^2(\Omega)} \le C\e \norm{u_0}_{H^2(\Omega)},
\end{equation*}
for any dimension $d\ge 2$ and any bounded $C^{1,1}$ domains. However, the convergence rates for the Robin problem (\ref{Robin}) is quite different due to the oscillating factor $b(x/\e)$ on the boundary which essentially requires an additional strong geometric assumption on $\partial\Omega$ in order to carry out a quantitative analysis. In this paper, we will try to obtain the best possible convergence rates under the strongest geometric assumption that is commonly used in the quantitative analysis of the oscillating boundary layers \cite{GerMas12,AKMP16,ShenZhuge16,ASS15}, namely, $\Omega$ is smooth and strictly convex (in the sense that all the principle curvatures of $\partial\Omega$ are positive).

The following is our main result.
\begin{theorem}\label{thm.L2Rate}
	Let $(A,b)$ satisfy (\ref{cond.ellipticity}) -- (\ref{cond.periodicity.b}). In addition, assume $b \in C^\infty(\mathbb{T}^d;\mathbb{R}^{m\times m})$ and $\Omega$ is a smooth and strictly convex domain. Let $u_\e$ and $u_0$ be the same as before and $u_0$ be sufficiently smooth. Then, for any $\sigma \in (0,1)$, there exists $C>0$ such that
	\begin{equation}\label{est.L2Rates}
	\norm{u_\e - u_0}_{L^2(\Omega)} \le \left\{
	\aligned
	&C\e^{\frac{7}{8} - \sigma } \big( \norm{u_0}_{W^{1,\infty}(\Omega)} + \norm{u_0}_{H^2(\Omega)} \big) \qquad &\text{if } d =3, \\
	&C\e^{1-\sigma}\big( \norm{u_0}_{W^{1,\infty}(\Omega)} + \norm{u_0}_{H^2(\Omega)} \big)  \qquad &\text{if } d = 4,\\
	&C\e^{\frac{3}{4} + \frac{3}{4(d-1)} - \sigma } \norm{u_0}_{W^{2,d/2}(\Omega)}  \qquad &\text{if } d \ge 5.
	\endaligned
	\right.
	\end{equation}
\end{theorem}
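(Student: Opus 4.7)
\medskip

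\noindent\textbf{Proof proposal.} The plan is to combine the standard two-scale expansion for the interior (handling the oscillation of $A(x/\varepsilon)$) with a careful duality argument that isolates a single ``new'' boundary functional, and then to bound that functional by quantitative oscillatory-integral estimates on the strictly convex surface $\partial\Omega$. More precisely, I would introduce the smoothed first-order corrector $w_\varepsilon = u_\varepsilon - u_0 - \varepsilon \chi(x/\varepsilon) S_\varepsilon(\eta_\varepsilon\nabla u_0)$, where $\chi$ is the usual matrix of correctors, $\eta_\varepsilon$ is a cutoff that removes an $O(\varepsilon)$-neighborhood of $\partial\Omega$, and $S_\varepsilon$ is a Steklov-type mollifier at scale $\varepsilon$. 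Testing the equation for $w_\varepsilon$ against itself and using the flux correctors together with the standard bounds $\|u_0 - S_\varepsilon(\eta_\varepsilon u_0)\|_{L^2(\Omega)} \lesssim \varepsilon^{1/2}\|u_0\|_{H^1}$ produces an $H^1$ bound on $w_\varepsilon$ of the usual order, up to a new boundary contribution coming from $b(x/\varepsilon)u_\varepsilon - \bar b\, u_0$ on $\partial\Omega$.

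To extract the $L^2$ rate, I would proceed by duality: given $G\in L^2(\Omega;\mathbb{R}^m)$, let $v_\varepsilon$ solve the adjoint Robin problem with data $G$, and $v_0$ its homogenized limit. Writing $\int_\Omega G\cdot(u_\varepsilon-u_0)$ using both weak formulations and inserting the two-scale expansions for $u_\varepsilon$ and $v_\varepsilon$, all bulk contributions can be absorbed into the standard $O(\varepsilon)$ interior error plus the $H^1$-norm of the correctors. What remains after this algebra is (up to controllable remainders) a single oscillating boundary functional of the form
\begin{equation*}
I_\varepsilon(\Phi) := \int_{\partial\Omega}\bigl(b(x/\varepsilon) - \bar b\bigr)\,\Phi(x)\,d\sigma(x),\qquad \Phi = u_0 \otimes v_0 + \text{lower-order terms},
\end{equation*}
where $\Phi$ inherits the regularity of $u_0$ and $v_0$ and in particular is in $W^{1,\infty}$ or $H^2$ only through the given Sobolev data.

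The core of the argument is then a quantitative estimate for $I_\varepsilon$. Expand $b-\bar b$ in a Fourier series on $\mathbb{T}^d$; since $b\in C^\infty$, its coefficients $\hat b(\xi)$ decay faster than any polynomial. For each $\xi\neq 0$, the strict convexity and smoothness of $\partial\Omega$ let us apply the classical oscillatory integral bound of Stein, giving
\begin{equation*}
\Bigl|\int_{\partial\Omega} e^{2\pi i\xi\cdot x/\varepsilon}\Psi(x)\,d\sigma(x)\Bigr| \lesssim \Bigl(\frac{\varepsilon}{|\xi|}\Bigr)^{(d-1)/2}\|\Psi\|_{C^N}
\end{equation*}
for smooth $\Psi$, summable against the Fourier tail of $b$. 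To apply this when $\Psi$ is merely in an $H^s$/$W^{1,\infty}$ space, I would mollify $\Psi$ on $\partial\Omega$ at scale $\delta$: the smoothed part $\Psi_\delta$ obeys the sharp oscillatory bound with loss $\delta^{-N}$, while the residual $\Psi-\Psi_\delta$ is controlled via $\|b\|_\infty$ and Sobolev-type approximation estimates such as $\|\Psi-\Psi_\delta\|_{L^1(\partial\Omega)} \lesssim \delta^{\alpha}\|\Psi\|_{X}$. Optimizing $\delta$ against $\varepsilon$ yields a bound of the form $|I_\varepsilon(\Phi)| \lesssim \varepsilon^{\kappa(d)-\sigma}\|\Phi\|_{X}$, and the three cases $d=3$, $d=4$, $d\ge 5$ correspond to which Sobolev norm of $\Phi$ (dictated by the available regularity of $u_0, v_0$ and trace inequalities) appears on the right-hand side.

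The main obstacle is exactly this last step: balancing the dimension-dependent stationary-phase gain $\varepsilon^{(d-1)/2}$ against the price of smoothing a trace that is at best in $W^{1,\infty}(\partial\Omega)\cap H^{3/2}(\partial\Omega)$, and doing so while keeping track of the duality loss $\|v_0\|$. The strictly-convex geometry is used crucially only here; everything else is soft. I expect the bookkeeping for the three regimes to produce exactly the exponents $\tfrac78$, $1$, and $\tfrac34 + \tfrac{3}{4(d-1)}$ stated in \eqref{est.L2Rates}, with the $\sigma$-loss absorbing logarithmic and endpoint issues in the interpolation.
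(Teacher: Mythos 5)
Your high-level architecture matches the paper's: the same smoothed first-order corrector $w_\varepsilon = u_\varepsilon - u_0 - \varepsilon\chi(x/\varepsilon)S_\varepsilon(\eta_\varepsilon\nabla u_0)$, the same reduction to a boundary functional involving $b(x/\varepsilon)-\bar b$, and the same $L^2$ duality against an adjoint Robin problem with data $G$. But there is a genuine gap at the step you yourself flag as ``the core of the argument.''

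You propose to estimate $I_\varepsilon(\Phi) = \int_{\partial\Omega}(b(x/\varepsilon)-\bar b)\,\Phi\,d\sigma$ by Fourier-expanding $b$, applying Stein's stationary-phase bound to each mode, mollifying the non-smooth amplitude $\Phi$ at a scale $\delta$, and optimizing $\delta$ against $\varepsilon$. This is precisely the ``direct'' use of oscillatory-integral theory that the paper explicitly rejects in its introduction as giving only rough estimates when $\Phi$ lies merely in a Sobolev space. Concretely, for $d=3$ the stationary-phase gain per mode is $\varepsilon$ but requires roughly two derivatives of the amplitude; mollifying $\Phi\in W^{1,\infty}(\partial\Omega)$ at scale $\delta$ costs $\delta^{-1}$ on the smooth piece and leaves an $O(\delta)$ residual in $L^1(\partial\Omega)$, so optimizing $\varepsilon\delta^{-1}\simeq\delta$ yields $\delta=\varepsilon^{1/2}$ and a rate of order $\varepsilon^{1/2}$ — nowhere near the claimed $\varepsilon^{7/8-\sigma}$. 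Similar accounting for $d\ge 4$ also falls short. The missing idea is the auxiliary Neumann problem $-\mathrm{div}(\widehat{A}\nabla v_\varepsilon)=0$ in $\Omega$, $n\cdot\widehat{A}\nabla v_\varepsilon = b(x/\varepsilon)-\bar b-M_\varepsilon$ on $\partial\Omega$ (distinct from your adjoint Robin duality solution), together with quantitative $L^p(\Omega)$ and $L^p(\partial\Omega)$ estimates of $v_\varepsilon$ obtained from the Neumann-function representation and a careful decomposition around both the singularity of $N_0(x,\cdot)$ and the stationary point of the phase. The new and decisive ingredient is the pointwise bound $\|v_\varepsilon\|_{L^\infty(\Omega)}\le C\varepsilon^{1/2}$, which, interpolated against $\|\nabla v_\varepsilon\|_{L^1(\Omega)}\lesssim\varepsilon^{1-\sigma}$, gives $\|v_\varepsilon\|_{L^2(\Omega)}\lesssim\varepsilon^{7/8-\sigma}$ for $d=3$; the corresponding $L^p$ and trace bounds drive all three regimes. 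Once $I_{\mathrm{osc}}$ is rewritten as $\int_{\partial\Omega}(n\cdot\widehat{A}\nabla v_\varepsilon)\,u_0\cdot\varphi$ and integrated by parts, the derivatives land on $u_0\varphi$ (or on the adjoint solution $h_0$) rather than on the oscillation, and the regularity bottleneck you worry about disappears. Without this device your proposal cannot reach the stated exponents, and the final assertion that ``the bookkeeping'' will produce exactly $\tfrac78$, $1$, and $\tfrac34+\tfrac{3}{4(d-1)}$ is not supported by the argument you sketch.
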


It is crucial to point out that, in the above theorem, the convergence rate for $d=4$ is nearly sharp since $\sigma>0$ can be arbitrarily small; but the rates are possibly not nearly sharp for other dimensions, especially for $d\ge 5$. The intrinsic reasons for the appearance of the dimension-dependent exponents in (\ref{est.L2Rates}) will be explained later. Nevertheless, we provide an effective framework in this paper that allows further improvements. On the other hand, if we consider the estimates of $\norm{u_\e - u_0}_{L^p(\Omega)}$ with some $p\in (1,2)$ depending on $d$, then the almost sharp convergence rate $O(\e^{1-\sigma})$ may still be obtained for all dimensions; see Remark \ref{rmk.Lprate}. Finally, we mention that, to be simple and more concentrated, we will not try to optimize the norms of $u_0$ on the right-hand side of (\ref{est.L2Rates}).

The proof of Theorem \ref{thm.L2Rate} follows a line as the usual Dirichlet or Neumann problems (see, e.g., \cite{Shen17,ShenBook}), but relies essentially on the quantitative estimates of certain oscillatory integrals. Let $w_\e = u_\e - u_0 - \e \chi(x/\e) S_\e(  \eta_\e\nabla u_0)$ be the error of the first-order approximation of $u_\e$ (which is the same as Dirichlet or Neumann problems). Then a direct computation shows that
\begin{equation}\label{eq.we.var}
\begin{aligned}
\int_{\Omega} A_\e \nabla w_\e \cdot \nabla \varphi + \int_{\partial\Omega} b_\e w_\e \cdot \varphi &= \int_{\partial\Omega} (\overline{b}^{\alpha\beta} - b^{\alpha\beta}(x/\e)) u_0^\beta \cdot \varphi^\alpha d\sigma \\
& \qquad + \text{other familiar integrals over } \Omega, 
\end{aligned}
\end{equation}
where $\varphi \in H^1(\Omega;\RM)$ is a test function. The key to (\ref{eq.we.var}) is the quantitative estimate of the oscillatory integral on $\partial\Omega$ in the following form
\begin{equation}\label{eq.osc.f}
\int_{\partial \Omega} f(x/\e)\cdot \phi(x) d\sigma,
\end{equation}
where $f(y)$ is a $1$-periodic vector-value function with zero mean. At first sight, one may expect to employ the oscillatory integral theory to analyze the quantitative behavior of (\ref{eq.osc.f}) as $\e \to 0$, which is feasible if $\phi$ is smooth enough. However, in our application, $\phi = u_0 \varphi$ is only supposed to be in certain Sobolev spaces that a direct using of oscillatory integral theory will only give rough estimates. To establish the convergence rates as better as possible, in this paper, we will develop a ``duality approach'' to analyze (\ref{eq.osc.f}) via an auxiliary Neumann problem. Precisely, let $v_\e$ be the solution of the following Neumann problem
\begin{equation}\label{eq.ve.N}
\left\{
\aligned
-\text{div}(\widehat{A} \nabla v_\e) & =0 \quad &\text{ in }& \Omega,\\
n\cdot \widehat{A} \nabla v_\e & =f(x/\e) - M_\e \quad &\text{ on }& \partial\Omega.
\endaligned
\right.
\end{equation}
where $\widehat{A}$ is the (constant) homogenized matrix and $M_\e$ is a constant vector such that the compatibility condition is satisfied. Note that (\ref{eq.ve.N}) may be formally viewed as a simplified version of the Robin problem (\ref{Robin}). By the integration by parts, it is not hard to see that the estimate of (\ref{eq.osc.f}) is reduced to the estimates of $v_\e$; see (\ref{eq.osc.dual}). Unlike using the oscillatory integral theory to (\ref{eq.osc.f}) directly, our method completely get rid of the difficulty caused by the lower regularity of $\phi$.

The Neumann problem (\ref{eq.ve.N}), of independent interest itself, has been studied in \cite{ASS15} by H. Aleksanyan, H. Shahgholian and P. Sj\"{o}lin. In particular, they obtained the estimates of $\norm{v_\e}_{L^1(\Omega)}$ and $\norm{\nabla v_\e}_{L^1(\Omega)}$ for system (\ref{eq.ve.N}); see Theorem \ref{thm.ve.g}. To obtain better estimates of $\norm{v_\e}_{L^p(\Omega)}$ with general $p>1$, which are critical in our application, we establish a new estimate of $\norm{v_\e}_{L^\infty(\Omega)}$ in this paper, namely,
\begin{equation}\label{est.ve.infity}
\norm{v_\e}_{L^\infty(\Omega)} \le C\e^{\frac{1}{2}}.
\end{equation}
This estimate is proved by using the integral representation for (\ref{eq.ve.N}) and taking advantage of the lower singularity of the Neumann function (compared to Poisson kernel). 
Then, combing these estimates, we conclude by interpolation that (see Theorem \ref{thm.ve.improve})
\begin{equation}\label{est.veL2}
	\norm{v_\e }_{L^2(\Omega)} \le \left\{
	\aligned
	&C\e^{\frac{7}{8}- \sigma} \qquad &\text{if } d =3, \\
	&C\e \qquad &\text{if } d= 4,\\
	&C\e^{\frac{5}{4}} |\ln \e|^{\frac{1}{2}} \qquad &\text{if } d\ge 5.
	\endaligned
	\right.
\end{equation}
Note that the estimates above are worse for lower dimensions due to the nature of oscillatory integrals with non-degenerate phases. In particular, we point out that the rate $\e^{\frac{7}{8}-\sigma}$ for $d=3$ follows naturally by an interpolation of $\norm{\nabla v_\e}_{L^1(\Omega)} \le C\e^{1-\sigma}$ and (\ref{est.ve.infity}). This explains why the convergence rate for $d=3$ in Theorem \ref{thm.L2Rate} cannot exceed $\e^{\frac{7}{8}-\sigma}$.

On the other hand, the estimate of the integral on the right-hand side of (\ref{eq.we.var}) definitely depends on the regularity of $u_0$ and $\varphi$. Under the assumption $u_0\in H^2(\Omega;\RM)$, we have
\begin{equation}\label{est.bu0.varphi}
\bigg|\int_{\partial\Omega} (\overline{b} - b(x/\e) -M_\e) u_0 \cdot \varphi d\sigma \bigg| \le \left\{
\aligned
&C\e^{\frac{2}{3}- \sigma}\norm{u_0}_{H^2(\Omega)} \norm{\varphi}_{H^1(\Omega)} \quad &\text{if } d =3, \\
&C\e^{\frac{1}{2}-\sigma}\norm{u_0}_{H^2(\Omega)} \norm{\varphi}_{H^1(\Omega)} \quad &\text{if } d = 4,\\
&C\e^{\frac{1}{4} + \frac{3}{4(d-1)} - \sigma }\norm{u_0}_{H^2(\Omega)} \norm{\varphi}_{H^1(\Omega)} \quad &\text{if } d \ge 5.
\endaligned
\right.
\end{equation}
See Lemma \ref{lem.Iosc.H1} (ii) for details. The above estimates are worse (than $\e^{\frac{1}{2}}$) for $d\ge 5$, because $u_0\in H^2(\Omega;\RM)$ actually implies worse integrability of $u_0$ and $\nabla u_0$ if $d$ is larger; see Remark \ref{rmk.u0H2}. Consequently, by a duality argument, the $L^2$ convergence rates may be raised to $\e^{\frac{3}{4} + \frac{3}{4(d-1)} - \sigma}$ for $d\ge 5$ as shown in Theorem \ref{thm.L2Rate}.

Finally, we would like to emphasize that even though the estimates of $\norm{v_\e}_{L^1(\Omega)}$ and $\norm{\nabla v_\e}_{L^1(\Omega)}$ in Theorem \ref{thm.ve.g}, as well as (\ref{est.ve.infity}), are possibly optimal in view of the singularity of the Neumann function, it is still open whether (\ref{est.veL2}) and (\ref{est.bu0.varphi}) are optimal, which are very basic questions related to singular oscillatory integrals (see (\ref{eq.Sing.Osc})) and of independent interest. Clearly, through our framework, any further improvement of (\ref{est.veL2}) for $d = 3$ or (\ref{est.bu0.varphi}) for $d\ge 5$ will lead to an corresponding improvement for Theorem \ref{thm.L2Rate}.

\subsection{Organization of the paper}
We prove a Weyl's equidistribution theorem and Theorem \ref{thm.main} in Section 2 and 3, respectively. In Section 4, we obtain the quantitative estimates of the auxiliary Neumann system (\ref{eq.ve.N}). In Section 5, we prove Theorem \ref{thm.L2Rate}.

\section{An Equidistribution Theorem}
This section is devoted to the proof of a (qualitative) version of the Weyl's equidistribution theorem under the non-resonance condition.
\begin{theorem}\label{thm.mainOscInt}
	Let $S$ be a closed Lipschitz surface satisfing the non-resonance condition with respect to $\mathbb{Z}^d$. Assume $f(x,y) : S\times \mathbb{T}^d \mapsto \mathbb{R}$ is 1-periodic and continuous in $y$ for each $x\in S$. Moreover, assume
	\begin{equation*}
		\int_{S} \norm{f(x,\cdot)}_{C(\mathbb{T}^d)} d\sigma(x) < \infty.
	\end{equation*}
	Then
	\begin{equation}\label{est.Ergodic.fxy}
		\lim_{\lambda \to \infty} \int_{S} f(x,\lambda x) d\sigma(x) = \int_{S} \int_{\mathbb{T}^d}  f(x,y) dy d\sigma(x).
	\end{equation}
\end{theorem}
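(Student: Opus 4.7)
The approach is to reduce the ergodic-type statement to the classical Riemann--Lebesgue lemma applied to a pushforward measure on $\mathbb{R}$, with the non-resonance hypothesis entering through the coarea formula.

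First I would perform a density reduction. The hypothesis $\int_{S}\|f(x,\cdot)\|_{C(\mathbb{T}^d)}\,d\sigma<\infty$ says that $f\in L^{1}(S;C(\mathbb{T}^d))$. Because elementary tensors $g(x)h(y)$ with $g\in L^{1}(S)$ and $h\in C(\mathbb{T}^d)$ are dense in this Bochner space, and trigonometric polynomials are uniformly dense in $C(\mathbb{T}^d)$, I can approximate $f$ to within $\varepsilon$ (in the $L^{1}(S;C(\mathbb{T}^d))$-norm) by a finite sum
\[
f_N(x,y)=\sum_{|k|\le N}g_k(x)e^{2\pi i k\cdot y}.
\]
This approximation error controls $|\int_{S}(f-f_N)(x,\lambda x)\,d\sigma|$ uniformly in $\lambda$, and likewise for the double integral on the right of \eqref{est.Ergodic.fxy}. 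The $k=0$ term already produces $\int_{S}\int_{\mathbb{T}^d}f_N\,dy\,d\sigma$, so by a standard three-epsilon argument the theorem reduces to showing that for each fixed $k\in\mathbb{Z}^d\setminus\{0\}$ and each $g\in L^{1}(S)$,
\[
I(\lambda):=\int_{S}g(x)\,e^{2\pi i\lambda k\cdot x}\,d\sigma(x)\longrightarrow 0\qquad(\lambda\to\infty).
\]

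Next I would recast $I(\lambda)$ as a Fourier transform. Let $F:S\to\mathbb{R}$ be the Lipschitz function $F(x)=k\cdot x$ and let $\mu_g:=F_{*}(g\,d\sigma)$ denote its pushforward, a finite complex Borel measure on $\mathbb{R}$ with total variation at most $\|g\|_{L^{1}(S)}$. Then $I(\lambda)=\widehat{\mu_g}(-\lambda)$, and the classical Riemann--Lebesgue lemma reduces the problem to verifying that $\mu_g$ is absolutely continuous with respect to Lebesgue measure on $\mathbb{R}$.

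The main step, and the step I expect to be the main obstacle, is this absolute continuity of $\mu_g$, as it is the only place where the Lipschitz regularity of $S$ and the non-resonance condition are actually exploited. At every point $x\in S$ where the tangent plane exists (a full $\sigma$-measure set by Rademacher applied to local graphs), the tangential gradient of $F$ is
\[
\nabla_{S}F(x)=k-(k\cdot n(x))\,n(x),
\]
which vanishes exactly when $n(x)\in\mathbb{R}k\subset\mathbb{R}\mathbb{Z}^{d}$. By Definition~\ref{def.non-resonance} this locus has zero surface measure, so $|\nabla_{S}F|>0$ $\sigma$-a.e. For any Lebesgue null set $E\subset\mathbb{R}$, the coarea formula for Lipschitz maps on a Lipschitz (rectifiable) surface yields
\[
\int_{F^{-1}(E)}|\nabla_{S}F|\,d\sigma=\int_{E}H^{d-2}\bigl(F^{-1}(t)\bigr)\,dt=0,
\]
and since $|\nabla_{S}F|>0$ $\sigma$-a.e., this forces $\sigma(F^{-1}(E))=0$, whence $\mu_g(E)=0$. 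Thus $\mu_g$ admits an $L^{1}(\mathbb{R})$ density, so $\widehat{\mu_g}(-\lambda)\to 0$ by Riemann--Lebesgue, completing the proof. The subtle technical point is justifying the application of Federer's coarea theorem in this Lipschitz setting and carefully identifying the exceptional set with the resonance set; once this is done, the rest is soft analysis that combines cleanly.
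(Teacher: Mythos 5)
Your proposal is correct, and it takes a genuinely different route than the paper's. Both proofs begin by reducing to a single Fourier mode via density of trigonometric polynomials in $C(\mathbb{T}^d)$ (the paper mollifies in $y$ and then expands $f_\delta$ in Fourier series; you approximate directly in the Bochner space $L^1(S;C(\mathbb{T}^d))$). From that point on the arguments diverge. The paper establishes an intermediate statement, Theorem~\ref{thm.Ergodic.S}, via a geometric/measure-theoretic machine: local graph coordinates, Egoroff to extract a set on which the Lipschitz graph is uniformly differentiable, a dyadic cube decomposition at a carefully tuned scale $\rho(\lambda)$, local linearization through Taylor's theorem, and then the hyperplane equidistribution lemma (Lemma~\ref{lem.Ergodic.Plane}) on each little cube, followed by a second application of Egoroff to pass back to the integral over $S$. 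Your argument bypasses all of that: you observe that $\int_S g(x)e^{2\pi i\lambda k\cdot x}\,d\sigma = \widehat{\mu_g}(-\lambda)$ where $\mu_g$ is the pushforward of $g\,d\sigma$ under $x\mapsto k\cdot x$, and you show $\mu_g$ is absolutely continuous via Federer's coarea formula for Lipschitz functions on rectifiable sets, after which the Riemann--Lebesgue lemma finishes. The non-resonance condition is used at essentially the same geometric point in both proofs --- it guarantees the tangential gradient $k-(k\cdot n)n$ of the linear phase is nonvanishing $\sigma$-a.e.~on $S$ --- but in your version it enters cleanly through the positivity of the coarea factor, whereas in the paper it enters through the irrationality hypothesis in Lemma~\ref{lem.Ergodic.Plane}. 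Your route is shorter and more conceptual, at the cost of invoking the coarea theorem for Lipschitz maps on $(d-1)$-rectifiable sets (which is a genuine, though classical, piece of geometric measure theory); the paper's route is self-contained modulo the classical ergodic theorem on hyperplanes, and the explicit dyadic/Egoroff machinery there also feeds into the uniform-rate statement Theorem~\ref{thm.rateXY}, though that corollary follows equally well from your version by the same compactness argument. The only point worth making fully explicit in a write-up is the application of the coarea formula in the Lipschitz-surface setting and the identification $\nabla_S F(x)=k-(k\cdot n(x))\,n(x)$ at $\sigma$-a.e.~$x$, which you correctly flag as the crux; both are standard and your sketch handles them soundly.
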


For a given unit vector $\xi \in \mathbb{S}^{d-1}$, denote by $H_\xi : = \{x\in \RM: \xi\cdot x = 0 \}$ the hyperplane perpendicular to $\xi$.

\begin{lemma}\label{lem.Ergodic.Plane}
	Let $\xi \in \mathbb{S}^{d-1}$ be irrational. If $g \in C(\mathbb{T}^d)$ is a 1-periodic continuous function and $f \in L^1(H_\xi)$. Then
	\begin{equation}\label{eq.Irr.Ergodic}
	\lim_{\lambda \to \infty} \sup_{y\in \RD} \bigg| \int_{H_\xi} g(y+ \lambda x) f(x) d\sigma(x) - \Ag{g} \int_{H_\xi} f(x) d\sigma(x) \bigg| = 0.
	\end{equation}
\end{lemma}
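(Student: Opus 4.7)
The approach is to expand $g$ in Fourier series on $\mathbb{T}^d$, reduce the integral on $H_\xi$ to a Fourier transform on $H_\xi\cong\mathbb{R}^{d-1}$, and invoke the Riemann--Lebesgue lemma; the irrationality of $\xi$ enters exactly to guarantee that no nonzero integer frequency projects to $0$. A preliminary approximation by trigonometric polynomials is needed because $g$ is only continuous.

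Given $\varepsilon>0$, first choose a trigonometric polynomial $P_N(z)=\sum_{|k|\le N}c_k\,e^{2\pi i k\cdot z}$ on $\mathbb{T}^d$ with $\|g-P_N\|_{C(\mathbb{T}^d)}<\varepsilon$ (Stone--Weierstrass, or Fej\'er kernels). Since $|\langle g\rangle - c_0|<\varepsilon$ and $|g(y+\lambda x)-P_N(y+\lambda x)|<\varepsilon$ pointwise, replacing $g$ by $P_N$ alters the quantity inside the supremum in (\ref{eq.Irr.Ergodic}) by at most $2\varepsilon\|f\|_{L^1(H_\xi)}$, uniformly in $y$ and $\lambda$. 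The problem is thereby reduced to showing
\[
\lim_{\lambda\to\infty}\sup_{y\in\mathbb{R}^d}\Bigl|\sum_{0<|k|\le N} c_k\, e^{2\pi i k\cdot y}\int_{H_\xi} e^{2\pi i \lambda(P_\xi k)\cdot x}\, f(x)\,d\sigma(x)\Bigr| = 0,
\]
where $P_\xi k := k-(k\cdot\xi)\xi$ is the orthogonal projection of $k$ onto $H_\xi$; I have used that $\xi\cdot x=0$ on $H_\xi$ to write $e^{2\pi i k\cdot(y+\lambda x)} = e^{2\pi i k\cdot y}\, e^{2\pi i \lambda (P_\xi k)\cdot x}$.

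The key observation is that the irrationality of $\xi$ forces $P_\xi k\ne 0$ for every $k\in\mathbb{Z}^d\setminus\{0\}$: if $P_\xi k=0$, then $k=(k\cdot\xi)\xi$, which would exhibit $\xi$ as a real multiple of the nonzero integer vector $k$, contradicting $\xi\notin\mathbb{R}\mathbb{Z}^d$. Hence for each such $k$ the inner integral equals $\widehat{f}(\lambda P_\xi k)$, the Fourier transform on $H_\xi\cong\mathbb{R}^{d-1}$ of $f\in L^1(H_\xi)$ evaluated at a point going to $\infty$. By Riemann--Lebesgue, this integral tends to $0$ as $\lambda\to\infty$. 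The factor $e^{2\pi i k\cdot y}$ is unimodular, so the decay is uniform in $y$; and since only finitely many $k$ with $0<|k|\le N$ appear, one can choose $\lambda_0$ large enough that the full finite sum is bounded by $\varepsilon$ for all $\lambda\ge \lambda_0$. Combining with the approximation step yields a uniform bound of order $(1+2\|f\|_{L^1(H_\xi)})\varepsilon$ in (\ref{eq.Irr.Ergodic}) for $\lambda\ge \lambda_0$; letting $\varepsilon\to 0$ completes the proof.

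The main obstacle is reconciling two competing requirements: $g$ is only continuous (so its Fourier series does not converge absolutely in general), yet the decay must be uniform in $y$. Both difficulties are resolved simultaneously by truncating to a trigonometric polynomial, after which $y$ enters only through unit-modulus phases and only finitely many modes require Riemann--Lebesgue decay. The irrationality of $\xi$ is used exactly once, but it is indispensable: a single rational direction would produce a nonzero mode $k$ with $P_\xi k=0$, contributing a constant in $\lambda$ and killing the limit.
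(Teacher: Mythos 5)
Your proof is correct, and it takes a genuinely different — and in fact cleaner — route than the paper's. The paper proves the lemma in two stages: first a non-uniform version (equation (2.3)) for smooth $g$ and compactly supported $f$, invoked as a "classical ergodic property" and then extended by density; and second a compactness/contradiction argument using the fact that the orbit $\{g(y+\cdot):y\in\mathbb{T}^d\}$ is precompact in $C(\mathbb{T}^d)$ (uniform continuity of translation on the torus), so that a hypothetical failure of uniformity along $(y_k,\lambda_k)$ can be upgraded, after passing to a subsequence $y_{k_\ell}\to z$, to a failure of the pointwise result at $z$. You instead truncate $g$ to a trigonometric polynomial once and for all; after that, the translate by $y$ appears only through unimodular prefactors $e^{2\pi i k\cdot y}$, so uniformity in $y$ is automatic, and the decay in $\lambda$ follows for each of the finitely many nonzero modes from the Riemann--Lebesgue lemma on $H_\xi\cong\mathbb{R}^{d-1}$, with irrationality of $\xi$ guaranteeing $P_\xi k\neq 0$. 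What your approach buys is that it is self-contained (no appeal to an unproven "classical ergodic property") and proves uniformity in $y$ in a single step, replacing the paper's Arzelà--Ascoli argument by the trivial observation that $|e^{2\pi i k\cdot y}|=1$. The paper's proof separates the two difficulties and is perhaps more flexible if one wanted to generalize beyond the Fourier-analytic setting, but for this specific lemma your argument is shorter and exposes the mechanism more transparently.
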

\begin{proof}
	First of all, it follows from the classical ergodic property of quasi-periodic functions that
	\begin{equation}\label{eq.ergo1}
	\lim_{\lambda \to \infty} \int_{H_\xi} g(\lambda x) f(x) d\sigma(x) = \Ag{g} \int_{H_\xi} f(x) d\sigma(x),
	\end{equation}
	for 1-periodic $g\in C^\infty(\mathbb{T}^d)$ and $f \in C_0^\infty(H_\xi)$, where $\Ag{g} = \int_{\mathbb{T}^d} g(y)dy$. Then the general case with $g \in C(\mathbb{T}^d)$ and $f \in L^1(H_\xi)$ follows by an approximation argument. Now, we prove (\ref{eq.Irr.Ergodic}) by contradiction. If (\ref{eq.Irr.Ergodic}) is not true, since $g$ is 1-periodic, there exist $\delta>0$, and sequences of $y_k\in \mathbb{T}^d$ and $\lambda_k \in \mathbb{R}$ such that $\lim_{k\to \infty}\lambda_k = \infty$ and
	\begin{equation}\label{eq.ergo.contra}
	\bigg| \int_{H_\xi} g(y_k+ \lambda_k x) f(x) d\sigma(x) - \Ag{g} \int_{H_\xi} f(x) d\sigma(x) \bigg| > \delta, \quad \text{for any } k\in \mathbb{N}.
	\end{equation}
	Now, by the compactness of $\{ g(y+\cdot): y\in \mathbb{T}^d \}$ in $C(\mathbb{T}^d)$, we may choose a subsequence of $\{k_\ell \} \subset \mathbb{N}$ such that $y_{k_\ell} \to z \in \mathbb{T}^d$ and $g(y_{k_\ell}+\cdot) \to g(z+\cdot)$ in $C(\mathbb{T}^d)$, as $k_\ell \to \infty$. Observe that
	\begin{equation}\label{est.ergo2}
	\begin{aligned}
	&\bigg| \int_{H_\xi} g(y_{k_\ell}+ \lambda_{k_\ell} x) f(x) d\sigma(x) - \Ag{g} \int_{H_\xi} f(x) d\sigma(x) \bigg| \\
	& \quad \le \bigg| \int_{H_\xi} [ g(y_{k_\ell}+ \lambda_{k_\ell} x) - g(z+\lambda_{k_\ell} x) ] f(x) d\sigma(x) \bigg| \\
	& \qquad + \bigg| \int_{H_\xi}  g(z+\lambda_{k_\ell} x) f(x) d\sigma(x) - \Ag{g} \int_{H_\xi} f(x) d\sigma(x) \bigg|.
	\end{aligned}
	\end{equation}
	Clearly, the first integral in the right-hand side of (\ref{est.ergo2}) is bounded by $\norm{g(y_{k_\ell} + \cdot) - g(z + \cdot)}_{C(\mathbb{T}^d)} \norm{f}_{L^1(H_\xi)} $, which tends to zero as $k_\ell \to \infty$. Also, (\ref{eq.ergo1}) implies that the second integral in the right-hand side of (\ref{est.ergo2}) tends to zero as $k_\ell \to \infty$. It follows that
	\begin{equation*}
	\bigg| \int_{H_\xi} g(y_{k_\ell}+ \lambda_{k_\ell} x) f(x) d\sigma(x) - \Ag{g} \int_{H_\xi} f(x) d\sigma(x) \bigg| \to 0, \quad \text{as } k_\ell \to \infty.
	\end{equation*}
	This contracts with (\ref{eq.ergo.contra}) and hence proves (\ref{eq.Irr.Ergodic}).
\end{proof}
	
Now, we may use Lemma \ref{lem.Ergodic.Plane} to prove a simplified version of Theorem \ref{thm.mainOscInt}.
\begin{theorem}\label{thm.Ergodic.S}
	Let $S$ be a closed Lipschitz surface and satisfy the non-resonance condition with respect to $\mathbb{Z}^d$. Then if $g\in C(\mathbb{T}^d)$ and $f\in L^1(S)$,
	\begin{equation}\label{eq.Ergodic.main}
	\lim_{\lambda \to \infty} \int_{S} g(\lambda x) f(x) d\sigma(x) = \Ag{g} \int_{S} f(x) d\sigma(x).
	\end{equation}
\end{theorem}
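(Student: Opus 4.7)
The plan is to pass from $C(\mathbb{T}^d)$ to a single Fourier mode, localize $S$ to a Lipschitz graph chart, push the resulting oscillatory integral forward to $\mathbb{R}$ via the coarea formula, and conclude by Riemann--Lebesgue; the non-resonance condition enters through the non-vanishing of the phase gradient.

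\emph{Reductions.} By Stone--Weierstrass, trigonometric polynomials are dense in $C(\mathbb{T}^d)$, and the operator $g \mapsto \int_S g(\lambda x) f(x)\,d\sigma$ is bounded by $\|f\|_{L^1}$ uniformly in $\lambda$. It therefore suffices to treat $g(y) = e^{2\pi i k\cdot y}$ with $k \in \mathbb{Z}^d\setminus\{0\}$, in which case $\Ag{g} = 0$ and the target becomes $\int_S e^{2\pi i\lambda k\cdot x} f(x)\,d\sigma \to 0$. Since $S$ is a closed (hence compact) Lipschitz surface, a finite atlas of Lipschitz graph charts $S\cap V_\alpha = \{(y,\psi_\alpha(y)) : y \in D_\alpha\}$ (over some coordinate hyperplane, say $\{x_d = 0\}$) together with a subordinate smooth partition of unity reduce the problem to showing, in one chart,
\[
J_\lambda := \int_D e^{2\pi i\lambda\Phi(y)}\,\tilde f(y)\,dy \;\longrightarrow\; 0,\qquad \Phi(y) := k'\cdot y + k_d\,\psi(y),
\]
where $k = (k',k_d) \in \mathbb{Z}^{d-1}\times\mathbb{Z}$ and $\tilde f(y) = (\chi f)(y,\psi(y))\sqrt{1+|\nabla\psi(y)|^2} \in L^1(D)$ bundles the cutoff and the surface-measure density.

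\emph{Coarea + Riemann--Lebesgue.} At every differentiability point of $\psi$ the unit normal to $S$ is $\pm(-\nabla\psi(y),1)/\sqrt{1+|\nabla\psi(y)|^2}$, and a short computation shows that $n\in \mathbb{R}\mathbb{Z}^d$ if and only if $\nabla\psi(y) \in \mathbb{Q}^{d-1}$. The non-resonance hypothesis thus translates to $\nabla\psi(y) \notin \mathbb{Q}^{d-1}$ for $\mathcal{L}^{d-1}$-a.e.\ $y\in D$. For $k \in \mathbb{Z}^d\setminus\{0\}$, the zero set $\{y : \nabla\Phi(y) = k' + k_d\nabla\psi(y) = 0\}$ is either empty (if $k_d = 0$, forcing $k'\neq 0$) or contained in $\{\nabla\psi = -k'/k_d\} \subset \{\nabla\psi \in \mathbb{Q}^{d-1}\}$, hence has measure zero. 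Consequently $|\nabla\Phi| > 0$ a.e.\ on $D$, and the Federer coarea formula for the Lipschitz function $\Phi$ yields
\[
J_\lambda \;=\; \int_{\mathbb{R}} e^{2\pi i\lambda t}\,\rho(t)\,dt, \qquad \rho(t) := \int_{\Phi^{-1}(t)} \frac{\tilde f(y)}{|\nabla\Phi(y)|}\,d\mathcal{H}^{d-2}(y),
\]
with $\|\rho\|_{L^1(\mathbb{R})} \le \|\tilde f\|_{L^1(D)} < \infty$ (another application of coarea). The classical Riemann--Lebesgue lemma on $\mathbb{R}$ then gives $J_\lambda \to 0$, completing the proof after summing over the finitely many charts.

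\emph{Main obstacle.} The crux of the argument is recasting the geometric non-resonance hypothesis on the normals of $S$ as the analytic statement $|\nabla\Phi| > 0$ a.e.\ on the parameter domain; once this is secured, the coarea / Riemann--Lebesgue mechanism absorbs the merely Lipschitz regularity of $\psi$ with no need for Lusin-type smoothing or an integration-by-parts argument. In particular, Lemma \ref{lem.Ergodic.Plane} is not directly invoked in this proof; its role (and especially the uniform-in-shift content) is reserved for the more general $(x,y)$-dependent Theorem \ref{thm.mainOscInt}, where an approximation of $f(x,\cdot)$ by Fourier polynomials interacts with the $x$-variable in a way that demands the full strength of the lemma.
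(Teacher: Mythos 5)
Your proof is correct, but it takes a genuinely different — and substantially more economical — route than the paper's. The paper proves this theorem directly (not via Fourier modes): it fixes a local graph chart, uses Egoroff twice to carve out good sets where the Lipschitz graph is uniformly close to its tangent plane at a controlled scale $\rho(\lambda)$, covers the good set by dyadic cubes of that scale, replaces the surface patch over each cube by a flat patch in the tangent hyperplane, and then invokes the quasi-periodic ergodic lemma on irrational hyperplanes (Lemma~\ref{lem.Ergodic.Plane}, with the uniform-in-shift conclusion being what makes the tangent-plane replacement legitimate). Your proof instead passes to a single character $e^{2\pi i k\cdot y}$ via Stone--Weierstrass, expresses the chart integral through the coarea formula as a genuine one-dimensional Fourier transform of the $L^1$ density $\rho(t)=\int_{\Phi^{-1}(t)}\tilde f/|\nabla\Phi|\,d\mathcal H^{d-2}$, and kills it with Riemann--Lebesgue. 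The nontrivial input is the translation of the non-resonance hypothesis into $|\nabla\Phi|>0$ a.e., which you carry out correctly: on a graph $x_d=\psi(y)$ the normal is rational iff $\nabla\psi(y)\in\mathbb Q^{d-1}$, and the critical set $\{k'+k_d\nabla\psi=0\}$ sits inside $\{\nabla\psi\in\mathbb Q^{d-1}\}$, hence is null. The $L^1$ bound $\|\rho\|_{L^1(\mathbb R)}\le\|\tilde f\|_{L^1(D)}$ is again coarea, so the Riemann--Lebesgue step is justified.

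What each approach buys: yours is shorter, makes no Lusin/Egoroff approximation, and sidesteps Lemma~\ref{lem.Ergodic.Plane} entirely — in particular it shows that lemma is not logically necessary for the equidistribution theorem (the paper's proof of the more general Theorem~\ref{thm.mainOscInt} only invokes Theorem~\ref{thm.Ergodic.S}, not Lemma~\ref{lem.Ergodic.Plane} directly, so your last remark about the lemma's role in Theorem~\ref{thm.mainOscInt} is not quite right). The trade-off is that you lean on the Federer coarea formula for Lipschitz maps, a heavier piece of geometric measure theory, whereas the paper's argument is elementary and self-contained once one accepts Weyl equidistribution on irrational hyperplanes. Two small points worth making explicit in a write-up: (1) the graph charts must be taken over coordinate hyperplanes of the \emph{original} axes (the paper stresses this), since a general rotation would distort $\mathbb Z^d$ and hence the non-resonance set; (2) to use coarea to rewrite $J_\lambda$ you need $|\nabla\Phi|>0$ a.e.\ \emph{before} forming $\rho$, because the coarea identity assigns zero to the critical set, so if $\tilde f$ put mass there you could not recover $J_\lambda$ from the layer-cake form — you have verified this, but it deserves emphasis as the precise place where non-resonance is used.
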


\begin{proof}
	{\bf Step 1: Reduction.}
	First of all, we may assume $g$ and $f$ are smooth. The general case follows by an approximation argument. By a partition of unity, we may restrict ourself to a local coordinate system (parallel to the original coordinate system so that the non-resonance condition is still valid with respect to $\mathbb{Z}^d$) such that for some $x_0 = (x_0',x_{0,d})\in S$,  $B(x_0,R_0) \cap S$ is the graph of
	\begin{equation*}
	x_d = \phi(x') \quad \text{for } x'\in Q(x_0',R_0) \subset \mathbb{R}^{d-1},
	\end{equation*}
	where $Q(x'_0,R_0)$ is a ($d-1$)-dimensional cube parallel to coordinates and centered at $x_0'$ with radius $R_0$. Put $Q_0 = Q(x'_0,R_0)$. Without loss of generality, we assume $|Q_0| = 1$. Thus, it suffices to concentrate on
	\begin{equation*}
	\begin{aligned}
	&\int_{\{(x',\phi(x')): x' \in Q_0  \}} g(\lambda x) f(x) d\sigma(x) \\
	& = \int_{Q_0} g(\lambda(x',\phi(x'))) f(x',\phi(x')) \sqrt{1+|\nabla \phi(x')|^2 } dx'
	\end{aligned}
	\end{equation*}
	We may reset $f(x',\phi(x')) \sqrt{1+|\nabla \phi(x')|^2 }$ to be $f_0(x')$, and assume again that $f_0$ is smooth. Then, we only need to show
	\begin{equation*}
	\lim_{\lambda \to \infty} \int_{Q_0} g(\lambda(x',\phi(x'))) f_0(x') dx' = \Ag{g} \int_{Q_0} f_0(x') dx'.
	\end{equation*}
	Or, more specifically, we would like to show that for any given $\eta>0$, there exists $\lambda_\eta >0$ such that for any $\lambda> \lambda_\eta$,
	\begin{equation}\label{est.ergodic.eta}
	\bigg| \int_{Q_0} g(\lambda(x',\phi(x'))) f_0(x') dx' - \Ag{g} \int_{Q_0} f_0(x') dx'\bigg| \le \eta.
	\end{equation}
	
	{\bf Step 2: Ruling out bad points.}
	By our assumption, $\phi(x')$ is a Lipschitz function and thus $\nabla \phi(x') $ is well-defined for almost every $x'\in Q_0$. Let $E \subset Q_0$ be the set that $\nabla \phi$ is not defined. Now, for $x'\in Q_0\setminus E$, define
	\begin{equation*}
	J(x',r) = \sup_{|y'-x'|\le r} \frac{|\phi(y') - \phi(x') - \nabla \phi(x') \cdot (y'-x')|}{|y'-x'|}
	\end{equation*}
	Since $\phi(x')$ is differentiable at $x'$, then
	\begin{equation*}
	\lim_{r\to 0} J(x',r) = 0, \quad \text{for any } x'\in Q_0\setminus E.
	\end{equation*}
	Now, by the Egoroff's theorem, for a given $\delta>0$ (to be determined), there exists a measurable subset $F \subset Q_0, E\subset F$ such that $|F| < \delta/2$ and
	\begin{equation*}
	J(x',r) \text{ converges to 0 uniformly for any } x'\in Q_0\setminus F.
	\end{equation*}	
	Define
	\begin{equation*}
	\omega_1(r) = \sup_{x'\in Q_0\setminus F} J(x',r).
	\end{equation*}
	Then, $\omega_1(r)$ is a increasing function and
	\begin{equation*}
	\lim_{r\to 0} \omega_1(r) = 0.
	\end{equation*}

	Next, we are going to use the non-resonance condition. Let $n(x')$ be the unit vector normal vector of the graph $x_d = \phi(x')$ at $(x',x_d)$, if there exists. Let
	\begin{equation*}
	G = \{(x',x_d)\in S: n(x') \text{ is well-defined and } n(x') \in \mathbb{R}\mathbb{Z}^d \}.
	\end{equation*}
	Clearly, by the non-resonance condition with respect to $\mathbb{Z}^d$, $|G| = 0$. Moreover, for $x'\in Q_0 \setminus G$, we know
	\begin{equation*}
	n(x') = \frac{(\nabla \phi(x'), -1)}{\sqrt{1+|\nabla \phi(x')|^2}} \ \text{ is irrational.}
	\end{equation*}
	Thus, we may define a good subset of $Q_0$,
	\begin{equation*}
	U = Q_0\setminus (F \cup G).
	\end{equation*}
	
	{\bf Step 3: Decomposition.}
	We construct a family of dyadic cubes in $Q_0$. Let $\{Q_k^j: j =1,2,3,\cdots, 2^{dk}\}$ be a collection of dyadic cubes at level $k$. Put
	\begin{equation*}
	P_k = \{ Q_k^j: U \cap Q_k^j \neq \emptyset \}.
	\end{equation*}
	
	Before we proceed, we claim that, there exists a decreasing step function $\rho(\lambda)$, taking discrete values in $\{ 2^{-k}: k\in \mathbb{N} \}$, such that $\rho(\lambda) \to 0$ as $\lambda \to \infty$. Moreover,	
	\begin{equation}\label{est.rho.condition}
	\lim_{\lambda \to \infty} \lambda \rho(\lambda) \omega_1( \sqrt{d} \rho(\lambda) ) = 0 \quad \text{and} \quad \lim_{\lambda \to \infty}\lambda \rho(\lambda) = \infty.
	\end{equation}
	
	The existence of such $\rho(\lambda)$ follows from a concrete construction. Actually, let
	\begin{equation}\label{eq.rho.def}
	\rho(\lambda):= \sup\Big\{ 2^{-k}: k\in \mathbb{N}, \lambda 2^{-k} \sqrt{\omega_1(\sqrt{d} 2^{-k})} \le 1 \Big\}.
	\end{equation}
	It is not hard to see $\rho(\lambda)$ is a well-defined decreasing step function, since $r \sqrt{\omega_1(\sqrt{d} r)}$, as a function of $r$, is increasing. Moreover, $\rho(\lambda) \to 0$ as $\lambda \to \infty$. Now, note that (\ref{eq.rho.def}) implies
	\begin{equation*}
	\lambda \rho(\lambda) \sqrt{\omega_1(\sqrt{d} \rho(\lambda))} \le 1.
	\end{equation*}
	It follows
	\begin{equation*}
	\lambda \rho(\lambda) \omega_1( \sqrt{d} \rho(\lambda) ) \le \sqrt{\omega_1(\sqrt{d} \rho(\lambda))} \to 0, \quad \text{as } \lambda \to \infty.
	\end{equation*}
	
	To see the second part of (\ref{est.rho.condition}), by the definition (\ref{eq.rho.def}), we know
	\begin{equation}\label{est.rho.ge1}
	\lambda 2 \rho(\lambda) \sqrt{\omega_1(\sqrt{d} 2 \rho(\lambda))} > 1.
	\end{equation}
	which yields
	\begin{equation*}
	\lambda \rho(\lambda) > \frac{1}{2 \sqrt{ \omega_1(\sqrt{d}2 \rho(\lambda))}} \to \infty, \quad \text{as } \lambda \to \infty.
	\end{equation*}
	This proves the claim as desired.
	
	Now, given any $\lambda\ge 1$, let $\rho(\lambda)$ be the decreasing step function as above such that $\rho(\lambda) = 2^{-k(\lambda)}$, where $k(\lambda) \in \mathbb{N}$. For $x'\in U$,
	\begin{equation*}
	Q(x',\lambda) = \text{the dyadic cube in } P_{k(\lambda)} \text{ containing } x'.
	\end{equation*}
	Note that $\rho(\lambda)$ is the side length of $Q(x',\lambda)$. Then, for each $\lambda$, $U$ may be covered by a sequence of dyadic cubes $Q(x',\lambda)$ with $x'\in U$, at level $k(\lambda)$.
	
	{\bf Step 4: Local approximation.}
	With $\rho(\lambda)$ and $Q(x',\lambda)$ as constructed above, we claim that for any $x'\in U$,
	\begin{equation}\label{est.Qrho}
	\lim_{\lambda \to 0}  \bigg| \dashint_{Q(x',\lambda)} g(\lambda(y',\phi(y'))) f_0(y') dy' - \Ag{g} \dashint_{Q(x',\lambda)} f_0(y') dy' \bigg| = 0 ,
	\end{equation}
	
	To prove (\ref{est.Qrho}), we temporarily fix $x'$ and $\lambda$ and consider
	\begin{equation}\label{eq.Kx'lambda}
	K(x',\lambda) := \dashint_{Q(x',\lambda)} g(\lambda(y',\phi(y'))) f_0(y') dy'.
	\end{equation}
	By the Taylor's expansion at $x'$, for $y'\in Q(x',\lambda)$
	\begin{equation}\label{eq.Taylor}
	\phi(y') = \phi(x') + \nabla \phi(x')(y'-x') + R(y',x')
	\end{equation}
	where, by the fact $x'\in Q_0\setminus F$ and the definition of $\omega_1$, the remainder satisfies
	\begin{equation*}
	|R(y',x')| \le |y'-x'|\omega_1(|y'-x'|).
	\end{equation*}
	Thus, by the smoothness of $g$, we have
	\begin{equation}\label{est.gExp}
	\begin{aligned}
	\big|g(\lambda(y',\phi(y'))) - g(\lambda (y', \phi(x') + \nabla \phi(x')\cdot (y'-x') ))  \big| &\le  L_g|y'-x'| \omega_1 (|y'-x'|) \\
	& \le L_g \sqrt{d} \rho(\lambda) \omega_1( \sqrt{d} \rho(\lambda) ),
	\end{aligned}
	\end{equation}
	where $L_g = \sup |\nabla g|$. Substituting (\ref{eq.Taylor}) and (\ref{est.gExp}) into (\ref{eq.Kx'lambda}), we have
	\begin{equation}\label{est.K}
	\begin{aligned}
	&\bigg|K(x',\lambda) - \dashint_{Q(x',\lambda)} g(\lambda(y',\phi(x') + \nabla \phi(x')(y'-x'))) f_0(x') dy' \bigg| \\
	& \le L_g M_f \sqrt{d} \lambda \rho(\lambda) \omega_1( \sqrt{d} \rho(\lambda) ) + M_g L_f \sqrt{d} \rho(\lambda),
	\end{aligned}
	\end{equation}
	where $M_f = \sup |f_0|, M_g = \sup |g|$ and $L_f = \sup|\nabla f_0|$. Observe that the above error is independent of $x'$. Thus, (\ref{est.K}) tends to zero as $\lambda \to \infty$ by the construction of $\rho(\lambda)$ and the first part of (\ref{est.rho.condition}).
	
	Next, we consider
	\begin{equation*}
	\begin{aligned}
	I_1(x',\lambda) & := f_0(x') \dashint_{Q(x',\lambda)} g(\lambda(y',\phi(x') + \nabla \phi(x')\cdot (y'-x')))  dy' \\
	& = f_0(x') \dashint_{Q(x',\lambda)} g(\lambda((x',\phi(x')) + ( y'-x', \nabla \phi(x')\cdot (y'-x')))  dy'.
	\end{aligned}
	\end{equation*}
	By a change of variable $w = (w',w_d)$ such that
	\begin{equation*}
	w' = \frac{y'-x'}{\rho(\lambda)} \quad \text{and} \quad w_d = \nabla \phi(x')\cdot w',
	\end{equation*}
	we see
	\begin{equation*}
	I_1(x',\lambda) = f_0(x') \dashint_{\widetilde{Q}(x',\lambda)} g(\lambda (x',\phi(x')) + \lambda \rho(\lambda) w)  d\sigma(w)
	\end{equation*}
	where $\widetilde{Q}(x',\lambda)$, the image of $Q(x',\lambda)$ under $w$, is some parallel polyhedron on the irrational hyperplane (perpendicular to the irrational direction $(\nabla\phi (x'), -1)$, since $x'\in U$). The position of $\widetilde{Q}(x',\lambda)$ may depends on $\lambda$; but its shape and size, depending only on $\nabla \phi(x')$, are fixed. Moreover, $|\widetilde{Q}(x')| = (\sqrt{1+|\nabla \phi(x')|^2})^{-1}$. Since we have $\lambda \rho(\lambda) \to \infty$ as $\lambda \to \infty$, by Lemma \ref{lem.Ergodic.Plane},
	\begin{equation}\label{est.I1.limit}
	\lim_{\lambda \to \infty } |I_1(x',\lambda) - \Ag{g} f_0(x')| = 0.
	\end{equation}
	
	On the other hand, it is clear that
	\begin{equation}\label{est.fz'}
	\lim_{\lambda\to 0}\bigg|\Ag{g} f_0(x') - \Ag{g} \dashint_{Q(x',\lambda)} f_0(y') dy' \bigg| = 0,
	\end{equation}
	by the smoothness of $f_0$.
	
	Finally, combining (\ref{est.K}), (\ref{est.I1.limit}) and (\ref{est.fz'}), we obtain (\ref{est.Qrho})
	
	{\bf Step 5: Completing the proof.} Define
	\begin{equation*}
	T(x',\lambda) := \bigg| \dashint_{Q(x',\lambda)} g(\lambda(y',\phi(y'))) f_0(y') dy' - \Ag{g} \dashint_{Q(x',\lambda)} f_0(y') dy' \bigg|
	\end{equation*}
	It has been prove in Step 4 that
	\begin{equation*}
	\lim_{\lambda\to \infty} T(x',\lambda) = 0, \quad \text{for any } x'\in U.
	\end{equation*}
	By the Egoroff's theorem, for a given $\delta>0$, there exists $H \subset U$ such that $|H| \le \delta/2$ and
	\begin{equation}\label{est.Lx'lambda}
	\omega_2(\lambda): = \sup_{x'\in V} T(x',\lambda) \to 0 \quad \text{as } \lambda \to 0,
	\end{equation}
	where $V = U\setminus H$. Note that $|Q_0\setminus V| \le \delta$.
	
	Now given $\eta>0$, choose $\delta>0$ small enough so that
	\begin{equation}\label{est.delta.eta}
	\delta < \frac{\eta}{4 M_f M_g}.
	\end{equation}
	By (\ref{est.Lx'lambda}), we can choose $\lambda_\eta $ large enough such that
	\begin{equation}\label{est.L.eta}
	T(x',\lambda)< \eta/2 \qquad \text{for any } x'\in V \text{ and } \lambda>\lambda_\eta.
	\end{equation}
	For any given $\lambda>\lambda_\eta$, there exists a collection of $Q(x',\lambda), x'\in V$ such that
	\begin{equation*}
	V \subset \bigcup_{x'\in V} Q(x',\lambda).
	\end{equation*}
	Therefore, we can find finite $Q(x'_j,\lambda)$ with $1\le j \le N_0$ and $x'_j\in V$, such that
	\begin{equation}\label{eq.W.Qj}
	V \subset W:=\bigcup_{1\le j\le N_0} Q(x'_j,\lambda) \quad \text{and} \quad Q(x'_i,\lambda)\cap Q(x'_j,\lambda) = \emptyset, \text{ if } i\neq j.
	\end{equation}
	Obviously,
	\begin{equation}\label{est.Wdelta}
	|Q_0 \setminus W| \le |Q_0 \setminus V| \le \delta.
	\end{equation}
	
	It  follows that
	\begin{equation*}
	\begin{aligned}
	&\bigg| \int_{Q_0} g(\lambda(x',\phi(x'))) f_0(x') dx' - \Ag{g} \int_{Q_0} f_0(x') dx'\bigg| \\
	& \le \bigg| \int_{Q_0\setminus W} g(\lambda(x',\phi(x'))) f_0(x') dx' - \Ag{g} \int_{Q_0 \setminus W} f_0(x') dx'\bigg| \\
	& \qquad + \sum_{j=1}^{N_0} \bigg| \int_{Q(x'_j,\lambda)} g(\lambda(x',\phi(x'))) f_0(x') dx' - \Ag{g} \int_{Q(x'_j,\lambda)} f_0(x') dx'\bigg| \\
	& \le 2 M_f M_g |Q_0\setminus W| +  \sum_{j=1}^{N_0}|Q(x'_j,\lambda)| T(x'_j,\lambda)\\
	& \le 2\delta M_f M_g + \frac{\eta |W|}{2} \\
	& \le \eta,
	\end{aligned}
	\end{equation*}
	where we have used (\ref{est.L.eta}), (\ref{eq.W.Qj}) and (\ref{est.Wdelta}) in the third inequality and used (\ref{est.delta.eta}) in the last inequality. This finishes the proof of (\ref{est.ergodic.eta}).
\end{proof}

\begin{proof}[Proof of Theorem \ref{thm.mainOscInt}]
	We use an approximation argument. Let $\phi \in C_0^\infty(B_1(0))$ such that $\int \phi =1$ and let $\phi_\delta(x) = \delta^{-d} \phi(\delta^{-1} x)$ with $0<\delta<1$. Define
	\begin{equation*}
	f_\delta(x,y) = \int_{\RD} f(x,y-z) \phi_{\delta}(z) dz.
	\end{equation*}
	Then, it is easy to see that
	\begin{equation}\label{est.fd.Cm}
	\norm{f_\delta(x,\cdot)}_{C^m(\mathbb{T}^d)} \le C_{m,\delta} \norm{f(x,\cdot)}_{C(\mathbb{T}^d)}, \qquad \text{for any } x\in S, m\in \mathbb{N},
	\end{equation}
	and
	\begin{equation}\label{est.fd-f}
	|f_\delta(x,y) - f(x,y)| \le C\omega(x,\delta), \qquad \text{for any } (x,y) \in S\times \mathbb{T}^d,
	\end{equation}
	where $\omega(x,\delta)$ is the modulo of the continuity of $f(x,\cdot)$, and $C$ is independent of $\delta$. By the fact $\omega(x,\delta) \to 0$ as $\delta \to 0$ for each $x\in S$, and the dominant convergence theorem, one obtains
	\begin{equation*}
	\lim_{\delta \to 0} \int_{S} \omega(x,\delta) d\sigma(x) = 0.
	\end{equation*}
	Consequently, in view of (\ref{est.fd-f}), to see (\ref{est.Ergodic.fxy}), it suffices to show
	\begin{equation}\label{est.fxy.delta}
	\lim_{\lambda \to \infty} \int_{S} f_\delta(x,\lambda x) d\sigma(x) = \int_{S} \int_{\mathbb{T}^d}  f_\delta(x,y) dy d\sigma(x).
	\end{equation}
	
	By Fourier series expansion of $f_\delta(x,y)$ in terms of $y$, we have
	\begin{equation*}
	f_\delta(x,y) = \sum_{k\in \mathbb{Z}^d} a_k(x) e^{2\pi i k \cdot y}, \qquad \text{where } a_k(x) = \int_{\mathbb{T}^d} f_\delta(x,y) e^{2\pi i k \cdot y} dy.
	\end{equation*}
	In view of (\ref{est.fd.Cm}), we have
	\begin{equation*}
	|a_k(x)| \le C_{m,\delta} |k|^{-m} \norm{f(x,\cdot)}_{C(\mathbb{T}^d)}, \qquad \text{for every } m\in \mathbb{N}.
	\end{equation*}
	It follows that
	\begin{equation*}
	\int_{S} f_\delta(x,\lambda x) d\sigma(x) = \sum_{|k| \le M} \int_{S} a_k(x) e^{2\pi i \lambda k\cdot x} d\sigma(x) + \sum_{|k| > M} \int_{S} a_k(x) e^{2\pi i \lambda k\cdot x} d\sigma(x).
	\end{equation*}
	Observe that the second term in the last equality is bounded by
	\begin{equation*}
	C_{d+1,\delta} \sum_{|k|> M} |k|^{-(d+1)} \int_{S} \norm{f(x,\cdot)}_{C(\mathbb{T}^d)} d\sigma(x) \le \frac{C_{d+1,\delta}}{M} \int_{S} \norm{f(x,\cdot)}_{C(\mathbb{T}^d)} d\sigma(x).
	\end{equation*}
	On the other hand, by Theorem \ref{thm.Ergodic.S}, we know
	\begin{equation*}
	\lim_{\lambda \to \infty} \sum_{|k| \le M} \int_{S} a_k(x) e^{2\pi i \lambda k\cdot x} d\sigma(x) = \int_{S} a_0(x) d\sigma(x) = \int_{S} \int_{\mathbb{T}^d}  f_\delta(x,y) dy d\sigma(x).
	\end{equation*}
	As a result,
	\begin{equation*}
	\bigg| \lim_{\lambda \to \infty} \int_{S} f_\delta(x,\lambda x) d\sigma(x) - \int_{S} \int_{\mathbb{T}^d}  f_\delta(x,y) dy d\sigma(x) \bigg| \le \frac{C_{d+1,\delta}}{M} \int_{S} \norm{f(x,\cdot)}_{C(\mathbb{T}^d)} d\sigma(x).
	\end{equation*}
	Since $M$ may be chosen arbitrarily large, the above estimate implies (\ref{est.fxy.delta}) by letting $M\to \infty$. This ends the proof.
\end{proof}

Observe that the convergence in (\ref{eq.Ergodic.main}) may depend on the function $f(x,y)$. However, one may have a fixed convergence rate, provided some compactness on the underlying function spaces. Let 
\begin{equation*}
L^1(S; C(\mathbb{T}^d) ) := \{ f(x,y):  \int_{S} \norm{f(x,\cdot)}_{C(\mathbb{T}^d)} d\sigma(x) < \infty\}.
\end{equation*}

\begin{theorem}\label{thm.rateXY}
	Let $X$ be a Banach space compactly embedded into $L^1(S; C(\mathbb{T}^d) )$, endowed with norm $\norm{\cdot}_X$. Then there exists a rate function $\omega(\lambda)$, depending only on $d,S$ and $X$, such that $\omega(\lambda) \downarrow 0$ as $\lambda \to \infty$, and
	\begin{equation}\label{est.fgXY}
	\bigg| \int_{S} f(x,\lambda x) d\sigma(x) - \int_{S} \int_{\mathbb{T}^d}  f(x,y) dy d\sigma(x) \bigg| \le \omega(\lambda) \norm{f}_{X},
	\end{equation}
	for any $f\in X$.
\end{theorem}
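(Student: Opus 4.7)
\textbf{Proof proposal for Theorem \ref{thm.rateXY}.} The plan is to reinterpret the two sides of (\ref{est.fgXY}) as the evaluation of a family of uniformly bounded linear functionals on $X$, and then upgrade the pointwise convergence supplied by Theorem \ref{thm.mainOscInt} to uniform convergence on $X$-bounded sets via the compactness of the embedding. Concretely, for each $\lambda \ge 1$ define the functional
\begin{equation*}
T_\lambda(f) := \int_{S} f(x,\lambda x)\, d\sigma(x) - \int_{S}\int_{\mathbb{T}^d} f(x,y)\, dy\, d\sigma(x), \qquad f \in L^1(S; C(\mathbb{T}^d)).
\end{equation*}
A direct estimate gives $|T_\lambda(f)| \le 2 \int_S \|f(x,\cdot)\|_{C(\mathbb{T}^d)}\, d\sigma(x) \le 2C_0 \|f\|_X$, where $C_0$ is the norm of the embedding $X \hookrightarrow L^1(S; C(\mathbb{T}^d))$. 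Thus the quantity
\begin{equation*}
\omega_0(\lambda) := \sup_{\|f\|_X \le 1} |T_\lambda(f)|
\end{equation*}
is well-defined and uniformly bounded by $2 C_0$.

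The core step is to show $\omega_0(\lambda) \to 0$ as $\lambda \to \infty$. Fix $\eta > 0$. Since the unit ball $B_X \subset X$ is precompact in $L^1(S; C(\mathbb{T}^d))$ by hypothesis, there exist finitely many functions $f_1,\dots, f_N \in B_X$ such that for every $f \in B_X$ one can find an index $j = j(f)$ with $\|f - f_j\|_{L^1(S; C(\mathbb{T}^d))} < \eta$. Splitting $T_\lambda(f) = T_\lambda(f_j) + T_\lambda(f - f_j)$ and applying the uniform bound to the second term yields
\begin{equation*}
|T_\lambda(f)| \le \max_{1 \le j \le N} |T_\lambda(f_j)| + 2\eta.
\end{equation*}
By Theorem \ref{thm.mainOscInt}, for each fixed $f_j$ we have $T_\lambda(f_j) \to 0$ as $\lambda \to \infty$, so there exists $\lambda_\eta$ such that $\max_j |T_\lambda(f_j)| < \eta$ whenever $\lambda > \lambda_\eta$. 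Consequently $\omega_0(\lambda) \le 3\eta$ for all $\lambda > \lambda_\eta$, and letting $\eta \to 0$ gives $\omega_0(\lambda) \to 0$.

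Finally, to obtain a monotone rate function as stated, set
\begin{equation*}
\omega(\lambda) := \sup_{\mu \ge \lambda} \omega_0(\mu).
\end{equation*}
Then $\omega$ is nonincreasing, $\omega(\lambda) \to 0$ as $\lambda \to \infty$, and by construction $|T_\lambda(f)| \le \omega(\lambda) \|f\|_X$ for every $f \in X$, which is exactly (\ref{est.fgXY}). The main (and only real) obstacle is verifying that one may pass from pointwise-in-$f$ convergence to uniform convergence on the unit ball; this is handled cleanly by the compactness of the embedding together with the uniform bound $|T_\lambda(f)| \le 2 C_0 \|f\|_X$, which controls the tail error on elements outside the finite $\eta$-net.
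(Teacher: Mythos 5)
Your proof is correct, and it rests on the same two ingredients as the paper's: the qualitative convergence from Theorem \ref{thm.mainOscInt} and the compactness of $B_X$ in $L^1(S;C(\mathbb{T}^d))$. The only real difference is the packaging. The paper argues by contradiction: if $\omega_0(\lambda_k) > \delta$ along some sequence $\lambda_k \to \infty$, it picks $f_k \in B_X$ nearly attaining the supremum, extracts an $L^1(S;C(\mathbb{T}^d))$-convergent subsequence $f_{k_\ell} \to f_\infty$, and derives a contradiction from Theorem \ref{thm.mainOscInt} applied to $f_\infty$. You instead make the argument direct via a finite $\eta$-net in $B_X$, which is logically equivalent (both are the standard ``pointwise convergence plus total boundedness gives uniform convergence'' device) but arguably cleaner, since it avoids diagonalization and yields the estimate $\omega_0(\lambda) \le 3\eta$ explicitly. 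One minor thing you do that the paper leaves implicit: the paper defines $\omega(\lambda)$ as the supremum in (\ref{eq.w.lambda}) and asserts $\omega(\lambda) \downarrow 0$, but that quantity is not obviously monotone in $\lambda$; your monotonization $\omega(\lambda) = \sup_{\mu\ge\lambda}\omega_0(\mu)$ closes that small gap. One sanity check worth keeping in mind (which your proof handles correctly but quietly): the net centers $f_j$ need only lie in $L^1(S;C(\mathbb{T}^d))$ for Theorem \ref{thm.mainOscInt} to apply to them, so taking them in $B_X$, as you do, is more than enough.
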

\begin{proof}
	Let $B_X = \{f\in X: \norm{f}_X \le 1  \}$. To prove (\ref{est.fgXY}), it suffices to show
	\begin{equation}\label{eq.w.lambda}
	\omega(\lambda):= \sup_{f\in B_X} \bigg| \int_{S} f(x,\lambda x) d\sigma(x) - \int_{S} \int_{\mathbb{T}^d}  f(x,y) dy d\sigma(x) \bigg| \to 0, \quad \text{as } \lambda \to \infty.
	\end{equation}	
	
	The above statement may be proved by contradiction. Suppose (\ref{eq.w.lambda}) is not true. Then there exist $\delta>0$ and a sequence $\{ \lambda_k: k\in \mathbb{N}\}$ such that $\lim_{k\to \infty} \lambda_k = \infty$ and $\omega(\lambda_k) > \delta$. This implies that there exists a sequence $\{f_k: k\in \mathbb{N}\} \subset B_X$ such that, for any $k>0$
	\begin{equation}\label{est.S.Contra}
	\bigg|  \int_{S} f_k(x,\lambda_k x) d\sigma(x) - \int_{S} \int_{\mathbb{T}^d}  f_k(x,y) dy d\sigma(x) \bigg| > \frac{\delta}{2}.
	\end{equation}
	By the compactness of $B_X \subset L^1(S; C(\mathbb{T}^d) )$, we may choose a subsequence $\{ k_\ell \} \subset \mathbb{N}$ and $f_\infty \in L^1(S; C(\mathbb{T}^d) )$ such that $f_{k_\ell} \to f_\infty$ in $L^1(S; C(\mathbb{T}^d) )$, as $k_\ell \to \infty$. Now, observe that
	\begin{equation*}
		\begin{aligned}
			&\bigg| \int_{S} f_{k_\ell}(x,\lambda_{k_\ell} x) d\sigma(x) - \int_{S} \int_{\mathbb{T}^d}  f_{k_\ell}(x,y) dy d\sigma(x) \bigg| \\
			& \le \bigg| \int_{S} \big( f_{k_\ell}(x,\lambda_{k_\ell} x) - f_\infty(x,\lambda_{k_\ell} x)\big) d\sigma(x)  \bigg| \\
			& \qquad + \bigg| \int_{S} f_{\infty}(x,\lambda_{k_\ell} x) d\sigma(x) -  \int_{S} \int_{\mathbb{T}^d}  f_{\infty}(x,y) dy d\sigma(x) \bigg| \\
			& \qquad + \bigg| \int_{S} \int_{\mathbb{T}^d}  f_{k_\ell}(x,y) dy d\sigma(x) - \int_{S} \int_{\mathbb{T}^d}  f_{\infty}(x,y) dy d\sigma(x) \bigg|\\
			& \le \bigg| \int_{S} f_{\infty}(x,\lambda_{k_\ell} x) d\sigma(x) -  \int_{S} \int_{\mathbb{T}^d}  f_{\infty}(x,y) dy d\sigma(x) \bigg| + 2\int_{S} \norm{f_{k_\ell}(x,\cdot) - f_\infty(x,\cdot)}_{C(\mathbb{T}^d)} d\sigma(x).
		\end{aligned}
	\end{equation*}
	As $k_\ell \to\infty$, the first term in the last inequality tends to zero by Theorem \ref{thm.mainOscInt}, while the second term tends to zero since $\{ f_{k_\ell} \}$ converge to $f_\infty$ in $L^1(S; C(\mathbb{T}^d) )$. This contradicts to (\ref{est.S.Contra}) and proves (\ref{eq.w.lambda}).
\end{proof}

As a straightforward application, we use Theorem \ref{thm.mainOscInt} to derive a homogenization theorem for harmonic functions with oscillating boundary data in Lipschitz domains satisfying the non-resonance condition.

\begin{theorem}\label{thm.Laplace.L2}
	Let $\Omega$ be a Lipschitz domain satisfying the non-resonance condition with respect to $\mathbb{Z}^d$. Assume  $g(x,y) : \partial\Omega\times \mathbb{T}^d \mapsto \mathbb{R}$ is 1-periodic and continuous in $y$ for each $x\in \partial\Omega$. Moreover, assume
	\begin{equation}\label{est.fxy.CL2}
	\int_{\partial\Omega} \norm{g(x,\cdot)}_{C(\mathbb{T}^d)}^2 d\sigma(x) < \infty.
	\end{equation}
	Let $u_\e$ be the solution of
	\begin{equation}\label{eq.Laplace.ue}
	\left\{
	\aligned
	- \Delta u_\e & = 0 \quad &\text{ in }& \Omega,\\
	u_\e(x) & =g(x,x/\e) \quad &\text{ on }& \partial\Omega.
	\endaligned
	\right.
	\end{equation}
	Then, $u_\e$ converges to $u_0$ pointwise (and in $L^2(\Omega)$), where $u_0$ is the solution of
	\begin{equation}\label{eq.Laplace.u0}
	\left\{
	\aligned
	-\Delta u_0 & = 0 \quad &\text{ in }& \Omega,\\
	u_0(x) & =\bar{g}(x) \quad &\text{ on }& \partial\Omega,
	\endaligned
	\right.
	\end{equation}
	where $\bar{g}(x) = \int_{\mathbb{T}^d} g(x,y) dy$.
\end{theorem}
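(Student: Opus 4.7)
The plan is to represent $u_\e$ and $u_0$ via the Poisson integral on the Lipschitz domain $\Omega$ and reduce the statement to a direct application of Theorem \ref{thm.mainOscInt} on the surface $S = \partial\Omega$. Concretely, for $x \in \Omega$,
\[
u_\e(x) = \int_{\partial\Omega} P(x,y)\, g(y, y/\e)\, d\sigma(y), \qquad u_0(x) = \int_{\partial\Omega} P(x,y)\, \bar{g}(y)\, d\sigma(y),
\]
where $P(x,\cdot)$ is the Poisson kernel of $\Omega$. The key observation is that for every fixed interior point $x \in \Omega$, $P(x,\cdot) \in L^\infty(\partial\Omega)$, since $\mathrm{dist}(x,\partial\Omega) > 0$ and the Poisson kernel on a bounded Lipschitz domain is bounded on the boundary away from $x$.

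First I would establish pointwise convergence $u_\e(x) \to u_0(x)$ for each $x \in \Omega$. Fix such an $x$ and set $f_x(y, z) := P(x, y)\bigl(g(y, z) - \bar{g}(y)\bigr)$. Then $f_x(y, \cdot)$ is $1$-periodic and continuous on $\mathbb{T}^d$ for each $y \in \partial\Omega$, with $\int_{\mathbb{T}^d} f_x(y, z)\, dz = 0$. The hypothesis (\ref{est.fxy.CL2}), combined with Cauchy--Schwarz and $\sigma(\partial\Omega) < \infty$, gives
\[
\int_{\partial\Omega} \|f_x(y, \cdot)\|_{C(\mathbb{T}^d)}\, d\sigma(y) \le 2\, \|P(x, \cdot)\|_{L^\infty(\partial\Omega)}\, \sigma(\partial\Omega)^{1/2} \Bigl( \int_{\partial\Omega} \|g(y, \cdot)\|_{C(\mathbb{T}^d)}^2\, d\sigma \Bigr)^{1/2} < \infty,
\]
so $f_x$ satisfies the hypotheses of Theorem \ref{thm.mainOscInt}. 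Applying that theorem with $\lambda = 1/\e$ yields
\[
u_\e(x) - u_0(x) = \int_{\partial\Omega} f_x(y, y/\e)\, d\sigma(y) \longrightarrow \int_{\partial\Omega} \int_{\mathbb{T}^d} f_x(y, z)\, dz\, d\sigma(y) = 0.
\]

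Next I would upgrade pointwise convergence to strong $L^2(\Omega)$ convergence by dominated convergence. Set $G(y) := \|g(y, \cdot)\|_{C(\mathbb{T}^d)}$, which lies in $L^2(\partial\Omega)$ by (\ref{est.fxy.CL2}). Since $|g(y, y/\e)| \le G(y)$ pointwise and the Poisson kernel is nonnegative, $|u_\e(x)| \le P[G](x) =: U(x)$ uniformly in $\e$, and likewise $|u_0(x)| \le U(x)$. The Poisson extension is bounded $L^2(\partial\Omega) \to L^2(\Omega)$ on bounded Lipschitz domains (either via Dahlberg's nontangential maximal function estimate, or via the embedding $L^2(\partial\Omega) \hookrightarrow H^{-1/2}(\partial\Omega)$ and standard Dirichlet theory), so $U \in L^2(\Omega)$. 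Hence $|u_\e - u_0|^2 \le 4U^2 \in L^1(\Omega)$, and the dominated convergence theorem combined with the pointwise statement from the previous step delivers $\|u_\e - u_0\|_{L^2(\Omega)} \to 0$.

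The main obstacle is more conceptual than technical: one must absorb the Poisson kernel into the amplitude of the oscillatory integral in order to cast $u_\e(x) - u_0(x)$ in the exact form required by Theorem \ref{thm.mainOscInt}. The $L^1(\partial\Omega; C(\mathbb{T}^d))$ estimate for the amplitude degenerates as $x$ approaches $\partial\Omega$ (since $\|P(x, \cdot)\|_{L^\infty}$ blows up), so this approach gives only pointwise convergence in the interior, not uniform convergence up to the boundary. Fortunately, this is already sufficient, provided one combines the elementary pointwise domination $|u_\e| \le P[G]$ with the standard $L^2$-boundedness of the Poisson extension on Lipschitz domains.
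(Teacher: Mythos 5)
Your overall architecture matches the paper's proof: represent $u_\e$ via the harmonic measure density (Poisson kernel) $k(\cdot,z)$, absorb it into the amplitude, invoke Theorem~\ref{thm.mainOscInt} for pointwise convergence, and upgrade to $L^2$ via the harmonic majorant of $\norm{g(\cdot,\cdot)}_{C(\mathbb{T}^d)}$ and dominated convergence. However, there is a genuine error in your key estimate: the claim that $P(x,\cdot)\in L^\infty(\partial\Omega)$ for each fixed interior $x$ is \emph{false} for general Lipschitz domains. The singularity of the Poisson kernel is not localized near the pole $x$; it also blows up at boundary points where $\partial\Omega$ has reentrant Lipschitz corners. (In two dimensions, near a corner of opening angle $\theta\in(\pi,2\pi)$ the kernel behaves like $r^{\pi/\theta - 1}$, which is unbounded although still in $L^2$ since $\theta<2\pi$.) What is true — and what Dahlberg's theorem gives you, as the paper uses — is $k(\cdot,z)\in L^2(\partial\Omega,d\sigma)$ for each interior $z$. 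Your $L^1(\partial\Omega;C(\mathbb{T}^d))$ bound is therefore obtained incorrectly: the factorization $L^\infty\cdot L^2\cdot\mathbf{1}\to L^1$ does not apply, but the correct factorization $L^2\cdot L^2\to L^1$ (Cauchy--Schwarz applied to $k(\cdot,z)$ and $\norm{g(\cdot,\cdot)}_{C(\mathbb{T}^d)}$, both of which are in $L^2(\partial\Omega)$ by Dahlberg and by hypothesis~\ref{est.fxy.CL2}) gives exactly what you need, with no change to the rest of the argument. Your closing remark that the $L^1$ bound ``degenerates as $x$ approaches $\partial\Omega$'' misdiagnoses the issue: it is not a question of $x$ being close to the boundary, but of the Lipschitz geometry away from $x$. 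Once the $L^2$--$L^2$ pairing is substituted, the dependence on $x$ is through $\norm{k(\cdot,x)}_{L^2(\partial\Omega)}$, which is finite for every interior $x$ by Dahlberg's theorem, and pointwise convergence in the interior follows.

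One secondary imprecision: your alternative route to $L^2$-boundedness of the Poisson extension via $L^2(\partial\Omega)\hookrightarrow H^{-1/2}(\partial\Omega)$ is not standard, since the variational Dirichlet theory requires data in $H^{1/2}(\partial\Omega)$, not $H^{-1/2}$. Stick with the nontangential maximal function estimate (Dahlberg, Verchota), which is what the paper invokes; that route is correct.
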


\begin{proof}
	First of all, observe that $g(x,x/\e) \in L^2(\partial\Omega)$. By the $L^2$ theory for elliptic equations in Lipschitz domains (see, e.g., \cite{Dahl79,Verchota84} or \cite{KenigBook}), the Dirichlet problem (\ref{eq.Laplace.ue}) is solvable and
	\begin{equation*}
	\norm{\mathcal{N}(u_\e)}_{L^2(\partial\Omega)} \le \norm{g(\cdot,\cdot/\e)}_{L^2(\partial\Omega)} \le  \bigg( \int_{\partial\Omega} \norm{g(x,\cdot)}_{C(\mathbb{T}^d)}^2 d\sigma(x) \bigg)^{1/2},
	\end{equation*}
	where $\mathcal{N}(u_\e)$ is the non-tangential maximal function. Moreover, the harmonic measure $\omega^z(x)$, with $z\in \Omega$, is absolutely continuous with respect to $\sigma(x)$ and
	\begin{equation}\label{eq.def.k}
	k(\cdot,z):= \frac{d\omega^z}{d\sigma} \in L^2(\partial\Omega,d\sigma),
	\end{equation}
	and the solution $u_\e$ may be represented by
	\begin{equation*}
	u_\e(z) = \int_{\partial\Omega} g(x,x/\e) k(x,z) d\sigma(x), \qquad \text{for } z\in \Omega.
	\end{equation*}
	Let $f_z(x,y) = g(x,y) k(x,z)$ and note that by (\ref{est.fxy.CL2}) and (\ref{eq.def.k}),
	\begin{equation*}
	\begin{aligned}
	\int_{\partial\Omega} \norm{ f_z(x,\cdot)}_{C(\mathbb{T}^d)} d\sigma(x) & = \int_{\partial\Omega} \norm{ g(x,\cdot)}_{C(\mathbb{T}^d)} k(x,z) d\sigma(x) \\
	& \le \bigg( \int_{\partial\Omega} \norm{ g(x,\cdot)}_{C(\mathbb{T}^d)}^2 d\sigma(x) \bigg)^{1/2} \bigg( \int_{\partial\Omega} k^2(x,z) d\sigma(x) \bigg)^{1/2} \\
	& \le C_z
	\end{aligned}
	\end{equation*}
	It follows from Theorem \ref{thm.mainOscInt} that
	\begin{equation}\label{eq.ue2u0}
	\begin{aligned}
	\lim_{\e \to 0} u_\e(z) & = \lim_{\e \to 0} \int_{\partial\Omega} f_z(x,x/\e) d\sigma(x) \\
	& = \int_{\partial\Omega} \int_{\mathbb{T}^d} f_z(x,y) dy d\sigma(x) \\
	& = \int_{\partial\Omega} \bigg( \int_{\mathbb{T}^d} g(x,y) dy \bigg) k(x,z) d\sigma(x) \\
	& = u_0(z),
	\end{aligned}
	\end{equation}
	for each $z\in \Omega$, where $u_0$ is exactly the solution of (\ref{eq.Laplace.u0}). This gives the pointwise convergence of $u_\e$.
	
	Finally, we show that
	\begin{equation}\label{est.ue2u0.L2}
	\lim_{\e\to 0} \norm{u_\e - u_0}_{L^2(\Omega)} = 0.
	\end{equation}
	Actually, by the maximal principle, we have
	\begin{equation}\label{est.unif.bound}
	-\bar{u}(z) \le u_\e(z) \le \bar{u}(z),
	\end{equation}
	where $\bar{u}$ is the solution of
	\begin{equation}\label{eq.Delta.baru}
	\left\{
	\aligned
	-\Delta \bar{u} & = 0 \quad &\text{ in }& \Omega,\\
	\bar{u} (x) & =\norm{g(x,\cdot)}_{C(\mathbb{T}^d)} \quad &\text{ on }& \partial\Omega.
	\endaligned
	\right.
	\end{equation}
	By (\ref{est.fxy.CL2}) and the $L^2$ solvability of (\ref{eq.Delta.baru}) in Lipschitz domains (see \cite{Dahl79,Verchota84}), we know $\norm{\bar{u}}_{L^2(\Omega)} < \infty$. Therefore, (\ref{est.ue2u0.L2}) follows readily from the pointwise convergence (\ref{eq.ue2u0}), uniform estimate (\ref{est.unif.bound}) and the dominant convergence theorem.
\end{proof}

\section{Qualitative Homogenization}
In this section, we will apply Theorem \ref{thm.mainOscInt} to establish the qualitative homogenization theorem for the Robin boundary value problem (\ref{Robin}) in Lipschitz domains with a non-resonance condition. We begin with the definitions of correctors and the homogenized coefficient matrix \cite{BLP78,ShenBook}. For each $1\le j\le d, 1\le \beta\le m$, let $\chi = (\chi_j^\beta) = (\chi_j^{1\beta},\chi_j^{2\beta},\cdots,\chi_j^{m\beta})$ denote the correctors for $\mathcal{L}_\e$, which are 1-periodic functions satisfying the cell problem
\begin{equation*}
\left\{
\begin{aligned}
\mathcal{L}_1 (\chi^{\beta}_{j} + P_j^\beta) &= 0 \qquad  \text{ in } \mathbb{T}^d, \\
\int_{\mathbb{T}^d} \chi_j^\beta & = 0,
\end{aligned}
\right.
\end{equation*}
where $P_j^\beta(x) = x_je^\beta $ with $e^\beta$ being $\beta$th Cartesian basis in $\RM$. Recall that the homogenized matrix $\widehat{A} = (\hat{a}^{\alpha\beta}_{ij})$ is defined by
\begin{equation*}
\hat{a}^{\alpha\beta}_{ij} = \int_{\mathbb{T}^d} \bigg[ a^{\alpha\beta}_{ij} + a^{\alpha\gamma}_{ik} \frac{\partial}{\partial x_k} (\chi^{\gamma\beta}_j) \bigg] dx,
\end{equation*}
and the homogenized operator is given by $\mathcal{L}_0 = -\text{div}(\widehat{A} \nabla )$.

The following is the main theorem of this section.
\begin{theorem}\label{thm.Quali.Homo}
	Let $\{A_\ell, b_\ell\}$ be a sequence of coefficient matrices satisfying (\ref{cond.ellipticity}) -- (\ref{cond.continuity}). Moreover, we assume $\{b_\ell \}$ are equicontinuous. Assume $\Omega$ is a bounded Lipschitz domain whose boundary satisfies the non-resonance condition with respect to $\mathbb{Z}^d$, $\{ F_\ell \} \subset H^{-1}(\Omega;\RM)$ and $\{ g_\ell\} \subset H^{1/2}(\partial\Omega;\RM)$. Suppose that $\{ u_\ell\}$ are the weak solutions of
	\begin{equation}\label{eq.ul}
	\left\{
	\aligned
	-{\rm div}(A_\ell(x/\e_{\ell}) \nabla u_\ell) &= F_\ell &\quad& \text{ in } \Omega,\\
	n\cdot A_\ell(x/\e_\ell)\nabla u_\ell+b_\ell(x/{\e_\ell})u_{\ell} & =g_\ell &\quad &\text{ on } \partial\Omega,
	\endaligned
	\right.
	\end{equation}
	where $\e_\ell \to 0$ as $\ell \to \infty$, $u_\ell \in H^1(\Omega;\RM)$. We further assume, as $\ell \to \infty$,
	\begin{equation}
	\left\{
	\aligned
	F_\ell &\to F  &\quad & \text{in } H^{-1}(\Omega;\RM),\\
	g_\ell &\to g &\quad & \text{in } H^{-\frac{1}{2}}(\partial\Omega;\RM),\\
	u_\ell &\rightharpoonup u  &\quad &\text{weakly in } H^1(\Omega;\RM), \\
	\widehat{A_\ell} &\to A^0,& & \\
	\overline{b_\ell} &\to b^0,& &
	\endaligned
	\right.
	\end{equation}
	where $\widehat{A_\ell}$ denotes the effective coefficient matrix of $A_\ell$ and $\overline{b_\ell}$ denotes the effective diffusive matrix of $b_\ell$. Then
	\begin{equation}\label{eq.Al2A0}
	A_\ell(x/\e_{\ell})  \nabla u_\ell \rightharpoonup A^0 \nabla u \qquad \text{weakly in } L^2(\Omega;\mathbb{R}^{m\times d}),
	\end{equation}
	Moreover, $u$ is a weak solution of
	\begin{equation}
	\left\{
	\aligned
	-{\rm div}(A^0 \nabla u) &= F &\quad& \text{ in } \Omega,\\
	n\cdot A^0 \nabla u+b^0 u & =g &\quad &\text{ on } \partial\Omega.
	\endaligned
	\right.
	\end{equation}
\end{theorem}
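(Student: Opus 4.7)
The plan is to start from the weak formulation of \eqref{eq.ul} tested against $\phi\in C^\infty(\overline{\Omega};\RM)$ (extending to $H^1(\Omega;\RM)$ by density at the end), namely
\begin{equation*}
\int_\Omega A_\ell(x/\e_\ell)\nabla u_\ell\cdot\nabla\phi\,dx+\int_{\partial\Omega}b_\ell(x/\e_\ell)u_\ell\cdot\phi\,d\sigma=\int_{\partial\Omega}g_\ell\cdot\phi\,d\sigma+\langle F_\ell,\phi\rangle,
\end{equation*}
and pass to the limit term by term. The right-hand side converges to $\int_{\partial\Omega}g\cdot\phi\,d\sigma+\langle F,\phi\rangle$ by the assumed convergence of $F_\ell$ and $g_\ell$. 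The two terms on the left must be handled by distinct techniques: the volume integral by Tartar's method of oscillating test functions, and the boundary integral by the equidistribution Theorem~\ref{thm.mainOscInt}.

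For the volume term, I would introduce the adjoint correctors $\chi^{*,\beta}_j$ and set
\begin{equation*}
\psi_\ell^{\beta,j}(x)=P_j^\beta(x)+\e_\ell\chi^{*,\beta}_j(x/\e_\ell),\qquad 1\le j\le d,\ 1\le\beta\le m,
\end{equation*}
which satisfy $\mathcal{L}_\ell^{*}\psi_\ell^{\beta,j}=0$ in $\RD$, are uniformly bounded in $H^1(\Omega;\RM)$, converge strongly in $L^2(\Omega;\RM)$ to $P_j^\beta$, and satisfy $A_\ell^{*}(\cdot/\e_\ell)\nabla\psi_\ell^{\beta,j}\rightharpoonup(\widehat{A_\ell})^{T}e_j e^\beta\to(A^0)^{T}e_j e^\beta$ weakly in $L^2(\Omega;\mathbb{R}^{m\times d})$. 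Testing the equation for $u_\ell$ against $\phi\psi_\ell^{\beta,j}$ with $\phi\in C_0^\infty(\Omega)$, testing $\mathcal{L}_\ell^{*}\psi_\ell^{\beta,j}=0$ against $\phi u_\ell$, and subtracting after using the pointwise identity $A_\ell\nabla u_\ell\cdot\nabla\psi_\ell^{\beta,j}=A_\ell^{*}\nabla\psi_\ell^{\beta,j}\cdot\nabla u_\ell$, a standard div--curl/Tartar argument passes weak limits through the remaining products (each involving one weakly and one strongly convergent factor). This yields $A_\ell(\cdot/\e_\ell)\nabla u_\ell\rightharpoonup A^0\nabla u$ in $L^2(\Omega;\mathbb{R}^{m\times d})$, which already gives \eqref{eq.Al2A0} and in particular the convergence $\int_\Omega A_\ell(x/\e_\ell)\nabla u_\ell\cdot\nabla\phi\,dx\to\int_\Omega A^0\nabla u\cdot\nabla\phi\,dx$.

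For the boundary integral I would split
\begin{equation*}
\int_{\partial\Omega}b_\ell(x/\e_\ell)u_\ell\cdot\phi\,d\sigma=\int_{\partial\Omega}b_\ell(x/\e_\ell)(u_\ell-u)\cdot\phi\,d\sigma+\int_{\partial\Omega}\bigl(b_\ell(x/\e_\ell)-\overline{b_\ell}\bigr)u\cdot\phi\,d\sigma+\int_{\partial\Omega}\overline{b_\ell}\,u\cdot\phi\,d\sigma.
\end{equation*}
The weak convergence $u_\ell\rightharpoonup u$ in $H^1$ together with the compact trace embedding $H^1(\Omega)\hookrightarrow L^2(\partial\Omega)$ gives $u_\ell\to u$ strongly in $L^2(\partial\Omega;\RM)$, so the first piece tends to $0$ (using $\norm{b_\ell}_{L^\infty}\le\mu^{-1}$), and the last converges to $\int_{\partial\Omega}b^0 u\cdot\phi\,d\sigma$ since $\overline{b_\ell}\to b^0$. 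For the middle piece, equicontinuity and uniform boundedness of $\{b_\ell\}$ let me extract via Arzel\`a--Ascoli a subsequence along which $b_\ell\to b_\infty$ uniformly on $\mathbb{T}^d$, necessarily with $\overline{b_\infty}=b^0$; replacing $b_\ell$ by $b_\infty$ introduces an error controlled by $\norm{b_\ell-b_\infty}_{C(\mathbb{T}^d)}\norm{u\phi}_{L^1(\partial\Omega)}\to 0$, and Theorem~\ref{thm.mainOscInt} applied with $S=\partial\Omega$, $\lambda=1/\e_\ell$ and $f(x,y)=(b_\infty(y)-\overline{b_\infty})u(x)\phi(x)$ (continuous in $y$ with zero mean, and in $L^1(\partial\Omega;C(\mathbb{T}^d))$ via the trace of $u$) shows that the main part converges to $0$ under the non-resonance hypothesis.

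Combining the two steps produces the effective variational identity
\begin{equation*}
\int_\Omega A^0\nabla u\cdot\nabla\phi\,dx+\int_{\partial\Omega}b^0 u\cdot\phi\,d\sigma=\int_{\partial\Omega}g\cdot\phi\,d\sigma+\langle F,\phi\rangle,
\end{equation*}
which is exactly the weak formulation of the effective Robin problem; uniqueness of its solution promotes subsequential convergence to convergence of the full sequence. The step I expect to be the main obstacle is the middle boundary piece above: Theorem~\ref{thm.mainOscInt} treats a \emph{fixed} integrand in $y$, while here the coefficient $b_\ell$ varies with $\ell$ simultaneously with the oscillation scale $\e_\ell$, so the equicontinuity hypothesis together with the Arzel\`a--Ascoli compactness on $\mathbb{T}^d$ is essential to decouple these two asymptotic processes and reduce to the fixed-coefficient setting in which Theorem~\ref{thm.mainOscInt} is formulated.
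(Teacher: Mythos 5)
Your proposal is correct and follows essentially the same strategy as the paper: obtain (\ref{eq.Al2A0}) and the interior equation from the standard homogenization of $\mathcal{L}_\e$ (the paper simply cites \cite[Theorem 2.3.2]{ShenBook} where you sketch Tartar's argument), pass to the limit on the boundary term by combining the compact trace embedding $H^1(\Omega)\hookrightarrow L^2(\partial\Omega)$ with the equicontinuity of $\{b_\ell\}$, and invoke the equidistribution theorem to kill the oscillating piece. The only technical difference is in how Arzel\`a--Ascoli is deployed: you extract a uniformly convergent subsequence $b_\ell\to b_\infty$ and apply Theorem~\ref{thm.mainOscInt} to the fixed integrand $(b_\infty(y)-\overline{b_\infty})u(x)\phi(x)$, closing via the every-subsequence-has-a-further-subsequence argument, whereas the paper observes that $\{b_\ell(y)u(x)\cdot\phi(x)\}$ is precompact in $L^1(\partial\Omega;C(\mathbb{T}^d))$ and invokes the uniform-rate version Theorem~\ref{thm.rateXY} directly; since Theorem~\ref{thm.rateXY} is itself proved by the compactness/contradiction argument you carry out by hand, the two routes are equivalent in substance.
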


\begin{proof}
	Thanks to the qualitative homogenization for the operator $\mathcal{L}_\e$ (see \cite[Theorem 2.3.2]{ShenBook}), our assumptions implies (\ref{eq.Al2A0}) and the equation $-{\rm div}(A^0 \nabla u) = F$ in $\Omega$. Thus, the key point is to verify the boundary condition. Note that the variational form of (\ref{eq.ul}) gives
	\begin{equation*}
	\int_{\Omega} A_\ell(x/\e_{\ell}) \nabla u_\ell \cdot \nabla \phi + \int_{\partial\Omega} b_\ell(x/\e_\ell) u_\ell \cdot \phi = \int_{\partial\Omega} g_\ell \cdot \phi + \int_{\Omega} F_\ell \cdot \phi,
	\end{equation*}
	for any $\phi\in C^\infty_0(\RD;\mathbb{R}^m)$. Clearly, by (\ref{eq.Al2A0}), we have
	\begin{equation*}
	\int_{\Omega} A_\ell(x/\e_{\ell}) \nabla u_\ell \cdot \nabla \phi \to \int_{\Omega} A^0 \nabla u \cdot \nabla \phi, \qquad \text{as } \ell \to \infty.
	\end{equation*}
	To verify the variational form of (\ref{eq.A0u}), it suffices to show
	\begin{equation}\label{eq.Bl2B0}
	\int_{\partial\Omega} b_\ell(x/\e_\ell) u_\ell \cdot \phi \to \int_{\partial\Omega} b^0 u \cdot \phi.
	\end{equation}
	
	First of all, by the equicontinuity and periodicity of $b_\ell$, as well as the convergence $\overline{b_\ell} \to b^0$, we know $\sup_{\ell} |b_\ell(y)| \le C.$
	Since $u_\ell$ converges to $u$ weakly in $H^1(\Omega;\RM)$, $u_\ell$ converges to $u$ strongly in $H^s(\Omega;\RM)$ with $\frac{1}{2}<s<1$. By the trace theorem,
	\begin{equation*}
	\norm{u_\ell - u}_{L^2(\partial\Omega)} \to 0,\quad \text{as } \ell \to \infty.
	\end{equation*}
	
	On the other hand, the Arzelà - Ascoli theorem implies that the set $\{ b_\ell(y)u(x)\cdot \phi(x) \}$ is compact in $L^1(S; C(\mathbb{T}^d) )$. Hence, by Theorem \ref{thm.rateXY},
	\begin{equation*}
	\bigg| \int_{\partial\Omega} b_\ell(x/\e_\ell) u \cdot \phi - \int_{\partial\Omega} \overline{b_\ell} u \cdot \phi \bigg| \le \omega(\e_\ell),
	\end{equation*}
	where $\omega(\e_\ell) \to 0$ as $\ell \to \infty$, and this rate depends at most on $\Omega, \norm{u\cdot \phi}_{H^1(\Omega)}$ and the modulus of equicontinuity of $\{b_\ell\}$. Consequently,
	\begin{equation*}
	\begin{aligned}
	&\bigg| \int_{\partial\Omega} b_\ell(x/\e_\ell) u_\e \cdot \phi - \int_{\partial\Omega} b^0 u \cdot \phi \bigg| \\
	& \qquad \le C\norm{u_\ell - u}_{L^2(\partial\Omega)} \norm{\phi}_{L^2(\partial\Omega)} + \omega(\e_\ell) + |\overline{b_\ell} - b^0| \norm{u}_{L^2(\partial\Omega)} \norm{\phi}_{L^2(\partial\Omega)},
	\end{aligned}
	\end{equation*}
	where the right-hand side converges to $0$ as $\ell \to \infty$. This proves (\ref{eq.Bl2B0}) and hence
	\begin{equation*}
	\int_{\Omega} A^0 \nabla u \cdot \nabla \phi + \int_{\partial\Omega} b^0 u \cdot \phi = \int_{\partial\Omega} g\cdot \phi + \int_{\Omega} f\cdot \phi,
	\end{equation*}
	which ends the proof.
\end{proof}

\begin{proof}[Proof of Theorem \ref{thm.main}]
	This is a simple corollary of Theorem \ref{thm.Quali.Homo}.
\end{proof}

\begin{remark}\label{rmk.layered}
Many materials in practice may have special microscopic structures beyond periodicity, such as layered materials (lamina) and directional materials (fiber, wood). These additional stronger structures may allows us to weaken the non-resonance condition that is indispensable for homogenization to take place on the boundary. Consider a material with a certain physical property,  described by a function $B$, in a fixed coordinate system. Assume that $B$ depends only on $k$ orthogonal directions $p_1, p_2,\cdots, p_k$, and remains constant along the rest orthogonal directions $p_{k+1}, \cdots, p_d$. Let $P = (p_1, p_2,\cdots, p_k)$ be a $d\times k$ matrix. Then, the previous assumption on $B$ is equivalent to the following structure equation
\begin{equation}\label{eq.B.layer}
	B(x) = B(PP^T x), \qquad \text{for any } x\in \RD.
\end{equation}
In particular, the layered and directional materials mentioned above are corresponding to the special cases $k = 1$ and $k=d-1$, respectively. We identify the structure equation for these two interesting cases:
\begin{equation}\label{eq.Layer.Direct}
\begin{aligned}
\text{Layered materials:} &\quad B(x) = B((p_1\otimes p_1) x);\\
\text{Directional materials:} &\quad B(x) = B((I - p_d\otimes p_d)x).
\end{aligned}
\end{equation}
Now, for such a material with structure (\ref{eq.B.layer}), the periodicity condition for $B$ will be imposed only on the linear subspace spanned by $\{p_1, p_2,\cdots, p_k \}$ (which is equivalent to $PP^T\RD$). As a result, we may redefine a weaker non-resonance condition as follows.
\begin{definition}\label{def.non-resonance.Gamma}
	Let $\Gamma \subset PP^T \RD$ be a periodic lattice. We say a closed Lipschitz surface $S$ satisfies the non-resonance condition with respect to $\Gamma$ if
	\begin{equation}\label{eq.non-resonance.Gamma}
		\sigma( \{x\in S: n(x) \text{ is well-defined and } n(x) \in \mathbb{R}\Gamma \} ) = 0.
	\end{equation}
\end{definition}
Definition \ref{def.non-resonance.Gamma} is weaker than Definition \ref{def.non-resonance} since we only need to verify (\ref{eq.non-resonance.Gamma}) for ``much less" directions. For example, we only need to verify a single direction $p_1$ (see (\ref{eq.Layer.Direct})) for a given layered material. Therefore, by a similar argument as before, the homogenization theorem may be established on a larger class of domains for directional or layered materials. The details will be omitted.

\end{remark}

\section{Auxiliary Neumann Problems}

This section and next one will be devoted to the quantitative convergence rates. As we have noticed, the main difficulty in Robin boundary value problems is the analysis of the integral in a form of
\begin{equation}\label{eq.osc.g}
	\int_{\partial\Omega} f(x/\e)\cdot \phi(x) d\sigma(x),
\end{equation}
where $f(y)$ is a $1$-periodic $\RM$-valued function. In this paper, we will use a ``duality approach'' to analyze (\ref{eq.osc.g}) quantitatively. Precisely, let $v_\e$ be the solution of the following auxiliary Neumann problem
\begin{equation}\label{eq.ve.Neumann}
\left\{
\aligned
-\text{div}(\widehat{A} \nabla v_\e) & =0 \quad &\text{ in }& \Omega,\\
n\cdot \widehat{A} \nabla v_\e & =f(x/\e) - M_\e \quad &\text{ on }& \partial\Omega.
\endaligned
\right.
\end{equation}
where $\widehat{A}$ is the (constant) homogenized matrix and $M_\e$ is a constant vector such that the compatibility condition is satisfied, i.e.,
\begin{equation*}
\int_{\partial\Omega} \big( f(x/\e) - M_\e \big)d\sigma(x) = 0.
\end{equation*}
Now, observing that by (\ref{eq.ve.Neumann}) and the integration by parts, one has
\begin{equation}\label{eq.osc.dual}
	\begin{aligned}
	\int_{\partial\Omega} f(x/\e)\cdot\phi(x) d\sigma(x) &= \int_{\Omega} \widehat{A}\nabla v_\e(x) \cdot \nabla \phi(x) dx + M_\e \int_{\partial\Omega} \phi(x)d\sigma(x)\\
	& = - \int_{\Omega} v_\e(x) \cdot \text{div}(\widehat{A}^*\nabla \phi(x)) dx + \int_{\partial\Omega} v_\e(x) (n\cdot\widehat{A}^* \nabla \phi(x))d\sigma(x) \\
	& \qquad+ M_\e \int_{\partial\Omega} \phi(x)d\sigma(x).
	\end{aligned}
\end{equation}
Thus, the estimate of (\ref{eq.osc.g}) is effectively reduced to the estimates of $v_\e$ or $\nabla v_\e$.

In this section, we focus on the estimates of $v_\e$ given by (\ref{eq.ve.Neumann}). Actually, (\ref{eq.ve.Neumann}) has been studied in \cite{ASS15} and we include their results (adapted to our situation) in the following theorem.
\begin{theorem}[\cite{ASS15}, Theorem 5.1 \& 5.3]\label{thm.ve.g}
	Let $\Omega$ be strictly convex and smooth. If $v_\e$ is the solution of (\ref{eq.ve.Neumann}) satisfying $\int_{\partial\Omega} v_\e d\sigma = 0$, then
	\begin{equation}\label{est.ve.Lp}
	\norm{v_\e }_{L^p(\Omega)} \le \left\{
	\aligned
	&C\e^{\frac{1}{p}} \qquad &\text{if } d =3, \\
	&C\e^{\frac{3}{2p}} \qquad &\text{if } d=4,\\
	&C\e^{\frac{2}{p}} |\ln \e|^{\frac{1}{p}} \qquad &\text{if } d\ge 5.
	\endaligned
	\right.
	\end{equation}
	and
	\begin{equation}\label{est.ve.W1q}
	\norm{\nabla v_\e}_{L^p(\Omega)} \le C_{\sigma} \e^{\frac{1}{p} - \sigma} , \qquad d\ge 3
	\end{equation}
	for any $1\le p<\infty$ and $\sigma>0$.
\end{theorem}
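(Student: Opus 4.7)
The plan is to exploit the strict convexity of $\partial\Omega$ via the classical method of stationary phase, applied after a duality identity that transfers the boundary oscillation onto a smooth amplitude in $\Omega$. Given a test function $\phi\in L^\infty(\Omega)$ (with mean adjusted so that the adjoint Neumann problem is solvable), let $w$ solve
\begin{equation*}
-\text{div}(\widehat{A}^*\nabla w) = \phi \quad \text{ in } \Omega, \qquad n\cdot \widehat{A}^*\nabla w = 0 \quad \text{ on } \partial\Omega, \qquad \int_\Omega w = 0.
\end{equation*}
Integration by parts, together with the compatibility condition built into $M_\e$, yields the duality identity
\begin{equation*}
\int_\Omega v_\e(x)\, \phi(x)\, dx = \int_{\partial\Omega} [f(y/\e) - M_\e]\, w(y)\, d\sigma(y).
\end{equation*}
Thus the $L^1(\Omega)$ bound for $v_\e$ reduces to an oscillatory boundary integral against a smooth amplitude $w$ that, by standard Schauder theory, is uniformly bounded in $C^{1,\alpha}(\overline{\Omega})$.

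The second step is to evaluate the boundary oscillatory integral via stationary phase. Expanding $f(y)=\sum_{k\neq 0} c_k e^{2\pi i k\cdot y}$ with rapidly decaying Fourier coefficients, it suffices to bound each $\int_{\partial\Omega} w(y)\, e^{2\pi i k\cdot y/\e}\, d\sigma(y)$. Strict convexity implies that for every nonzero $k$, the linear phase $y\mapsto k\cdot y$ has exactly two nondegenerate critical points on $\partial\Omega$ (the antipodal points where the Gauss map hits $\pm k/|k|$); combined with the nonvanishing Gaussian curvature, the stationary phase theorem yields
\begin{equation*}
\Bigl|\int_{\partial\Omega} w(y)\, e^{2\pi i k\cdot y/\e}\, d\sigma(y)\Bigr| \le C\, (\e/|k|)^{(d-1)/2}\, \norm{w}_{C^N(\partial\Omega)},
\end{equation*}
for suitable $N=N(d)$. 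Alternatively, $m$-fold integration by parts delivers $C(\e/|k|)^m\norm{w}_{C^m}$; summing in $k$ transfers these bounds to $f$.

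The third step matches the stationary phase gain $\e^{(d-1)/2}$ against the regularity of $w$ available from elliptic theory. For $d=3$ only one derivative is needed and $w\in C^1$ delivers the full rate $\e$; for $d=4$, since $w\in C^{1,\alpha}$ encompasses $C^{3/2}$, interpolating between the one- and two-fold integration by parts gives $\e^{3/2}$; for $d\ge 5$ the stationary phase would require more than two derivatives of $w$, which are unavailable for $\phi\in L^\infty$, so one integrates by parts only twice and obtains $\e^2$ with a logarithmic loss arising at the critical borderline in the kernel integration. Combined with the uniform bound $\norm{v_\e}_{L^\infty(\Omega)}\le C$ from standard boundary regularity for the Neumann problem with bounded data, log-convexity of $L^p$ norms ($\norm{v_\e}_{L^p}\le \norm{v_\e}_{L^1}^{1/p}\norm{v_\e}_{L^\infty}^{1-1/p}$) delivers all the stated $L^p$ rates. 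The gradient bound $\norm{\nabla v_\e}_{L^p}\le C_\sigma \e^{1/p-\sigma}$ follows from the parallel analysis applied to the Poisson-type operator with kernel $\nabla_x N(x,y)$, whose heightened singularity at $y=x$ (by one order) produces the arbitrarily small $\e^\sigma$ loss traded for dimension-independence of the exponent. The main technical obstacle will be tracking the boundary regularity of $w$ uniformly as the duality is iterated across all dimensions, and in particular pinpointing the origin of the logarithmic factor that appears in the critical dimension $d=5$.
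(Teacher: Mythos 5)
The theorem you are asked to prove is not proved in this paper; it is cited from \cite{ASS15}, and the paper only sketches the strategy in one sentence (integral representation via the Neumann function $N_0$ plus boundary oscillatory integral estimates) before giving the pointwise kernel bound (\ref{est.N0xy}). Your duality formulation is in fact the \emph{same} strategy in disguised form: since $w(y)=\int_\Omega N_0^*(y,x)\phi(x)\,dx$, your identity $\int_\Omega v_\e\phi=\int_{\partial\Omega}(f(y/\e)-M_\e)\,w(y)\,d\sigma(y)$ is precisely the Neumann-function representation $v_\e(x)=\int_{\partial\Omega}N_0(x,y)(f(y/\e)-M_\e)\,d\sigma(y)$ after Fubini. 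The singularity of $N_0$ along the diagonal is not eliminated by dualizing; it simply reappears as the fact that $w$ is only $W^{2,p}(\Omega)$ for $p<\infty$ (equivalently $C^{1,\alpha}$ for all $\alpha<1$, but not $C^2$). All the analytic difficulty that the \cite{ASS15} proof addresses by localizing the oscillatory integral, introducing cutoffs at scale $\e^{1/2}$, and splitting into a cone decomposition near the stationary points (exactly as in the $L^\infty$ proof given later in the paper) still has to be carried out in your framework.

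This is where the proposal has real gaps rather than just omitted routine steps. First, the stationary phase bound $|\int_{\partial\Omega}a\,e^{i\lambda\phi}d\sigma|\le C\lambda^{-(d-1)/2}\norm{a}_{C^N}$ and the integration-by-parts bound $C\lambda^{-m}\norm{a}_{C^m}$ cannot simply be interleaved: near the two critical points of the linear phase the gradient vanishes, so pure integration by parts fails there, while away from them the amplitude's limited regularity controls the rate. Making this rigorous with an amplitude that is only $C^{1,\alpha}$ (and whose $\nabla^2 w$ is merely $L^p$ on $\partial\Omega$) for $d\ge 5$ is exactly the part you defer at the end, and it is where the logarithmic factor arises: $|\nabla^2 w(y)|\lesssim\int_\Omega|x-y|^{-d}\,dx$ diverges logarithmically near the boundary, mirroring the borderline integral $\int r^{-1}\,dr$ that appears when one pairs $|\nabla^2 N_0(x,y)|\le C|x-y|^{-d}$ with the $\e^{1/2}$-scale cutoff. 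You have correctly identified the qualitative picture, but the proposal does not extract the $|\ln\e|^{1/p}$ factor from it and concedes as much. Second, the gradient estimate (\ref{est.ve.W1q}) is only gestured at; the statement that the ``heightened singularity $\ldots$ produces the arbitrarily small $\e^\sigma$ loss'' is a description of the answer, not an argument. Third, a small but necessary repair: the normalization is $\int_{\partial\Omega}v_\e=0$, not $\int_\Omega v_\e=0$, so when you test with $\phi\in L^\infty$ you must split off the constant mode and bound $\int_\Omega v_\e$ by solving the adjoint problem with data $1$ (and compensating constant Neumann data); that adjoint solution is genuinely smooth, so the constant mode is harmless, but the proposal should say so rather than assuming mean-zero $\phi$ is enough.

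In summary: the route you chose is the right one and is morally equivalent to the cited proof, but the proposal is a plan rather than a proof. To close it you would need the precise localization/cutoff/cone decomposition (of the kind performed in the paper's own $L^\infty$ proof of (\ref{est.ve.infty})) adapted to each dimension, a quantitative accounting of the regularity of $w$ (or of $N_0$) that produces the logarithm for $d\ge5$, and an explicit argument for the $W^{1,p}$ bound.
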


	The proof of Theorem \ref{thm.ve.g} is based on the integral representation for the solution and the estimates of oscillatory integrals on the boundary.
	Precisely, the solution of (\ref{eq.ve.Neumann}) may be given by
	\begin{equation}\label{eq.ve.N0}
	v_\e(x) = \int_{\partial\Omega} N_0(x,y)(f(y/\e) - M_\e) d\sigma(y).
	\end{equation}
	where $N_0(x,y)$ is the Neumann function of $-\text{div}(\widehat{A} \nabla)$ in $\Omega$. Recall that the Neumann function $N_0(x,y) = (N_0^{\alpha\beta}(x,y))$ is a matrix such that \cite{KLS14}
	\begin{equation*}
	\left\{
	\begin{aligned}
		&\mathcal{L}_0(N^\beta(\cdot, y)) = e^\beta \delta_y \qquad &\text{in } \Omega,\\
		&\frac{\partial}{\partial \nu_0}(N_0(\cdot,y) ) = -e^\beta |\partial\Omega|^{-1} \qquad &\text{on } \partial\Omega,\\
		&\int_{\partial\Omega} N_0(x,y) d\sigma(x) = 0&,
	\end{aligned}
	\right.
	\end{equation*}
	where $N^\beta_0 = (N_0^{1\beta}, N_0^{2\beta},\cdots, N_0^{m\beta})$ and $e^\beta = (0,\cdots, 1,\cdots, 0)$ is the $\beta$th Cartesian basis in $\RM$. Moreover, since $\Omega$ is smooth, we have
	\begin{equation}\label{est.N0xy}
	|\nabla_x^i \nabla_y^j N_0(x,y)| \le \frac{C_{i,j}}{|x-y|^{d-2+i+j}}.
	\end{equation}
	for any $i,j\ge 0$.
	
\begin{remark}
	If $x$ is an interior point of $\Omega$, the oscillatory integral theory for non-degenerate phase implies $|v_\e(x)| \le C\e^{\frac{d-1}{2}}$ (see \cite{ASS13}), which shows that the estimate in (\ref{est.ve.Lp}) should be sharp for $p = 1$ and $3\le d\le 5$. On the other hand, however, if $p>1$, the estimates in (\ref{est.ve.Lp}) do not give the sharp convergence rates. In particular, (\ref{est.ve.W1q}) and a Sobolev inequality lead to
	\begin{equation}\label{est.ve.Imp}
	\norm{v_\e}_{L^p(\Omega)} \le C\e^{\frac{1}{p}+\frac{1}{d} - \sigma},
	\end{equation}
	for any $\frac{d}{d-1}\le p\le \infty$ and any $\sigma>0$. Compared to (\ref{est.ve.Lp}), this is a better convergence rate for lower dimensions or larger $p$. In the most interesting case with $p = 2$, combining (\ref{est.ve.Lp}) (for $d\ge 4$) and (\ref{est.ve.Imp}) (for $d=3$), we arrive at
	\begin{equation}\label{est.ve.L2}
	\norm{v_\e }_{L^2(\Omega)} \le \left\{
	\aligned
	&C\e^{\frac{5}{6}-\sigma} \qquad &\text{if } d =3, \\
	&C\e^{\frac{3}{4}} \qquad &\text{if } d=4,\\
	&C\e |\ln \e|^{\frac{1}{2}} \qquad &\text{if } d\ge 5.
	\endaligned
	\right.
	\end{equation}
\end{remark}

In the following theorem, we make a key improvement on the estimate of $v_\e$.

\begin{theorem}
	Let $\Omega$ and $v_\e$ be the same as Theorem \ref{thm.ve.g}. Then, for $d\ge 3$,
	\begin{equation}\label{est.ve.infty}
	\norm{v_\e}_{L^\infty(\Omega)} \le C\e^{\frac{1}{2}}.
	\end{equation}
\end{theorem}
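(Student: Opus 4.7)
The plan is to start from the boundary integral representation
\begin{equation*}
v_\e(x) = \int_{\partial\Omega} N_0(x,y)(f(y/\e) - M_\e)\, d\sigma(y), \qquad x\in\Omega,
\end{equation*}
together with the Neumann kernel bound $|N_0(x,y)|\le C|x-y|^{-(d-2)}$ recorded in (\ref{est.N0xy}) with $i=j=0$. The crucial structural point is that this singularity is \emph{one order weaker} than the Poisson kernel $|x-y|^{-(d-1)}$ that would govern a Dirichlet representation; this extra power of $|x-y|$ is exactly what separates a logarithmically divergent near-diagonal surface integral from an integrable one.

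For $x\in\Omega$ fixed, I would choose the scale $r=\e^{1/2}$, introduce a smooth cutoff $\chi_r$ equal to $1$ on $\{|y-x|\le r\}$ and supported in $\{|y-x|\le 2r\}$, and decompose $v_\e(x)=I_1(x)+I_2(x)$, where $I_1$ is the contribution of $\chi_r$ and $I_2$ that of $1-\chi_r$. For the near part, the boundedness of $f(\cdot/\e)-M_\e$ and the $(d-1)$-Ahlfors regularity of $\partial\Omega$ give
\begin{equation*}
|I_1(x)|\le C\int_{\partial\Omega\cap B(x,2r)} |x-y|^{-(d-2)}\, d\sigma(y)\le C\int_0^{2r}\rho^{d-2}\cdot\rho^{-(d-2)}\, d\rho\le Cr=C\e^{1/2}.
\end{equation*}
Note that the analogous estimate with $|x-y|^{-(d-1)}$ would yield only $|\log\e|$, which is why the gain of one order in the Neumann kernel is decisive.

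For the far part, the amplitude $\psi_x(y):=N_0(x,y)(1-\chi_r(|y-x|))$ is smooth on $\partial\Omega$ and supported in $\{|y-x|\ge r\}$. I would expand $f-\overline{f}=\sum_{k\ne 0} c_k e^{2\pi i k\cdot y}$ with rapidly decaying $|c_k|$ and write
\begin{equation*}
I_2(x)=\sum_{k\ne 0}c_k\int_{\partial\Omega}\psi_x(y)\,e^{2\pi i k\cdot y/\e}\, d\sigma(y).
\end{equation*}
Since $\partial\Omega$ is smooth and strictly convex, the classical stationary-phase analysis on a surface of non-vanishing Gaussian curvature gives, for each $k$, an expansion whose leading term is a multiple of $(\e/|k|)^{(d-1)/2}\psi_x(y_{k,*})$, where $y_{k,*}\in\partial\Omega$ is one of the two critical points (those at which $k/|k|$ is parallel to the outer normal). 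On $\{|y-x|\ge r\}$ the derivatives of $\psi_x$ satisfy $|\nabla^j\psi_x|\le Cr^{-(d-2+j)}$, so every term in the stationary-phase expansion of order $j$ contributes at most $(\e/|k|)^{(d-1)/2+j}\cdot r^{-(d-2+2j)}$, which with $r=\e^{1/2}$ is exactly $\e^{1/2}/|k|^{(d-1)/2+j}$. Summing the resulting rapidly convergent series in $k$ yields $|I_2(x)|\le C\e^{1/2}$, and combining with the bound on $I_1$ completes the proof.

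The main technical obstacle is the last step: a naive bound of the remainder of the stationary phase expansion by $(\e/|k|)^N\|\psi_x\|_{C^{2N}}$ would cost a factor $r^{-2N}=\e^{-N}$, which is catastrophic. The point that makes the argument work is the exact algebraic identity $(d-1)/2 + j - (d-2+2j)/2 = 1/2$ valid for every $j\ge 0$: each extra $\e^{-1/2}$ produced by a derivative of $\psi_x$ at the critical point is precisely compensated by one power of $\e^{1/2}$ from the stationary-phase subleading term. It is this matching, together with the $|x-y|^{-(d-2)}$ (not $|x-y|^{-(d-1)}$) decay of $N_0$ used in $I_1$, that makes the choice $r=\e^{1/2}$ produce the common bound $\e^{1/2}$ on both pieces.
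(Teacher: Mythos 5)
Your structural setup mirrors the paper's argument closely: the boundary integral representation via the Neumann kernel, the decisive bound $|N_0(x,y)|\le C|x-y|^{-(d-2)}$ (one order better than a Poisson kernel), the cutoff at scale $r=\e^{1/2}$, the reduction to a single Fourier mode, and the exponent-counting identity $(d-1)/2 + j - (d-2+2j)/2 = 1/2$ are all exactly the mechanism the paper exploits. The near-field estimate $|I_1|\le C\e^{1/2}$ via Ahlfors regularity is correct and matches the paper's $R_3$.

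The gap is in the far-field estimate $I_2$. You treat the stationary-phase series as though it converges and sum the bounds $\e^{1/2}/|k|^{(d-1)/2+j}$ over $j$, but the expansion is only asymptotic: one must truncate at some order $N$ and control the remainder. In every standard formulation (Hörmander Thm.\ 7.7.5; Stein, Ch.\ VIII, Prop.\ 6) the remainder after $N$ terms is bounded by $C\lambda^{-N}\|\psi_x\|_{C^{2N}}$, or at best by $C\lambda^{-N-(d-1)/2}\|\psi_x\|_{C^{2N+M}}$ with an overhead of $M\ge\lceil (d-1)/2\rceil$ extra derivatives (needed to pass from the Gaussian Fourier estimate to a pointwise control). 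Either way, with $\|\psi_x\|_{C^m}\sim r^{-(d-2+m)}$ and $r=\e^{1/2}$, this overhead is not absorbed by your identity: the first form gives $\e^{-(d-2)/2}/|k|^N$, the second gives $\e^{1/2-M/2}/|k|^{\dots}$, both of which blow up. Your identity controls the \emph{formal} terms (which evaluate $\psi_x$ only at the critical point), but the remainder involves $\sup$-norms of derivatives over the whole support, and the two are not interchangeable. So the argument as written does not close.

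What rescues the paper is that it does not invoke a black-box stationary-phase theorem at all. After localizing and decomposing into cones around the (possible) critical point $w'$, it integrates by parts exactly twice and then estimates the resulting \emph{singular integral} $\int_{\{|z'|\ge \e^{1/2},\,|z'-w'|\ge\e^{1/2}\}}|\widetilde N_0(z)|\,|z'-w'|^{-a}\e^{-b/2}\,dz'$ ($a+b=4$) directly over the annulus. Because $|\widetilde N_0(z)|\sim|z'|^{-(d-2)}$ is \emph{integrable} against the $(d-1)$-dimensional surface measure, this is an $L^1$-type control of the amplitude, and every term comes out to exactly $\e^{1/2}/|k|^2$. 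Replacing the $L^\infty$ norm $\|D^\alpha\psi_x\|_\infty\sim r^{-(d-2+|\alpha|)}$ by $\|D^\alpha\psi_x\|_{L^1}\sim r^{1-|\alpha|}$ is a gain of a full factor $r^{d-1}$ (the volume of the annulus), and it is precisely this gain that a ready-made stationary-phase remainder estimate cannot see. To repair your proof you would either need a stationary-phase remainder bound phrased in terms of $L^1$ norms of the amplitude on cones, or simply carry out the two integrations by parts and the radial integral directly, which is what the paper does.
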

\begin{proof}
	The proof follows the idea of \cite{ASS15} and relies on the integral representation (\ref{eq.ve.N0}). By subtracting a constant, we may simply assume the mean of $f$ is zero. Also, since $f$ is $1$-periodic and smooth, by the Fourier series expansion, we may first consider $f(x) = g_k e^{2\pi i k\cdot x}$ with $k\in \mathbb{Z}^d\setminus\{ 0 \}$ and $f_k\in \RM$ is a constant vector. In this case
	\begin{equation*}
	M_\e = \frac{f_k}{|\partial\Omega|}\int_{\partial\Omega} e^{2\pi i\e^{-1} k\cdot x} d\sigma(x).
	\end{equation*}
	Since $\Omega$ is smooth and strictly convex, $|M_\e| \le C|f_k|\e^{\frac{d-1}{2}}$; see \cite{ASS13,Stein93}. 
	
	Hence, (\ref{eq.ve.N0}) gives
	\begin{equation}\label{eq.Sing.Osc}
	\begin{aligned}
	v_\e(x) & = \int_{\partial\Omega} N_0(x,y)f_k \big( e^{2\pi i \e^{-1} k\cdot y} - M_\e \big) d\sigma(y) \\
	& = \int_{\partial\Omega} N_0(x,y)f_k e^{2\pi i \e^{-1} k\cdot y}  d\sigma(y) + O(|f_k|\e^{\frac{d-1}{2}}).
	\end{aligned}
	\end{equation}
	To obtain the pointwise estimate of the integral in (\ref{eq.Sing.Osc}), it suffices to consider a typical case that $x\in \partial\Omega$. Fix such an $x\in \partial\Omega$. Let $\eta$ be a smooth cut-off function such that $\eta(y) = 1$ for $y\in B_{r_0}(x)$ and $\eta(y) = 0$ for $y\notin B_{2r_0}(x)$, where $r_0>0$ is an appropriately chosen radius (depending only on $d$ and $\Omega$) that $\partial\Omega$ may be localized near $x$. Moreover, $|\nabla^\ell \eta| \le C_\ell$. To deal with the singularity of the Neumann function at $x$, we introduce another cut-off function to rule out a small neighborhood of $x$. Let $\theta_\e(y) = 0$ in $B_{\e^{1/2}}(x)$ and $\theta_\e (y) = 1$ in $\RD\setminus B_{5\e^{1/2}}(x)$, and $|\nabla^\ell \theta_\e| \le C_\ell \e^{-\frac{\ell}{2}}$. Consequently, we have
	\begin{align*}
	\int_{\partial\Omega} N_0(x,y) f_k e^{2\pi i \e^{-1} k\cdot y}  d\sigma(y) &= \int_{\partial\Omega}\eta(y) \theta_\e(y)  N_0(x,y) f_k e^{2\pi i \e^{-1} k\cdot y}  d\sigma(y) \\
	& \qquad + \int_{\partial\Omega} (1 - \eta(y))  N_0(x,y) f_k e^{2\pi i \e^{-1} k\cdot y}  d\sigma(y) \\
	& \qquad + \int_{\partial\Omega} (1 - \theta_\e(y))  N_0(x,y) f_k e^{2\pi i \e^{-1} k\cdot y}  d\sigma(y) \\
	& = R_1 + R_2 + R_3.
	\end{align*}
	Note that $(1 - \eta(y))  N_0(x,y)$ has no singularity on the boundary and therefore, $|R_2| \le C|f_k| \e^{\frac{d-1}{2}}$. To estimate $R_3$, note that $1-\theta_\e$ is supported in $B_{5\e^{1/2}}(x)$ and thus
	\begin{equation*}
	|R_3| \le |f_k| \int_{\partial\Omega \cap B_{5\e^{1/2}}(x)} |N_0(x,y)| d\sigma(y) \le C|f_k| \e^{\frac{1}{2}},
	\end{equation*}
	where we have used (\ref{est.N0xy}) with $i=j=0$ in the last inequality. Hence, it suffices to estimate $R_1$. To do so, we first transform the surface integral to the usual one in $\mathbb{R}^{d-1}$. Precisely, we assume $z = Q^t(y-x)$ moves $x\in \partial\Omega$ to the origin and transform the tangent plane at $x$ to $z_d = 0$, where $Q\in \mathbb{R}^{d\times d}$ is an orthogonal matrix. As a result, $\partial\Omega \cap B(x,2r_0)$ is transformed to the local graph $z_d = \phi(z')$ which satisfies $\phi(0) = 0$ and $\nabla \phi(0) = 0$. It follows that
	\begin{equation*}
	\begin{aligned}
	R_1 &= \int_{\{ |z'|< 2r_0\} } \eta(x+Qz) \theta_\e(x+Qz) N_0(x,x+Qz) f_k e^{2\pi i \e^{-1} k\cdot (x+Qz)} \sqrt{1+|\nabla \phi(z')|^2} dz' \\
	& = K_x \int_{\{ |z'|< 2r_0\} } \tilde{\eta}(z) \tilde{\theta}_\e(z) \widetilde{N}_0(z)f_k  e^{2\pi i \e^{-1} Q^t k\cdot z} dz',
	\end{aligned}
	\end{equation*}
	where $K_x = e^{2\pi i \e^{-1} k\cdot x}, \tilde{\eta}(z) =\eta(x+Qz) \sqrt{1+|\nabla \phi(z')|^2},\ \tilde{\theta}_\e(z) = \theta_\e(x+Qz)$ and $\widetilde{N}_0(z) = N_0(x,x+Qz)$. By our construction, one knows $\tilde{\eta}(z)$ is a smooth cut-off function supported in $B_{2r_0}(0)$; $\tilde{\theta}_\e(z)$ is a smooth cut-off function vanishing in $B_{\e^{1/2}}(0)$ and $|\nabla^\ell \tilde{\theta}_\e| \le C\e^{-\frac{\ell}{2}}$. Moreover, (\ref{est.N0xy}) implies
	\begin{equation}\label{est.tN0}
	|\nabla^\ell \widetilde{N}_0(z)| \le \frac{C}{|z'|^{d-2+\ell}}, \qquad \ell \ge 0.
	\end{equation}
	
	Now, let $n = (n',n_d) = \frac{Q^t k}{|Q^t k|}$. Using $z_d = \phi(z')$, we have
	\begin{equation}\label{eq.R1}
	R_1 = K_x \int_{\{ |z'|< 2r_0\} } \tilde{\eta}(z) \tilde{\theta}_\e(z) \widetilde{N}_0(z)f_k  e^{2\pi i \e^{-1}|k| (n'\cdot z' + n_d \phi(z'))} dz'.
	\end{equation}
	Let $F(z') = n'\cdot z' + n_d \phi(z')$. We need to discuss two cases separately.
	
	{\bf Case 1:} $|n'|>C_0|n_d|$ for some $C_0>0$. In this case, there exists $n_j$ with some $1\le j\le d-1$ so that $|n_j|\ge |n'|/(d-1) > C_0|n_d|/(d-1)$. Thus, if $C_0$ is large enough,
	\begin{equation}\label{est.D_jF}
	\Big|\frac{\partial}{\partial z_j} F(z')\Big| = \Big|n_j + n_d \frac{\partial}{\partial z_j} \phi(z') \Big| \ge \frac{|n_j|}{2} \ge \frac{1}{4(d-1)}.
	\end{equation}
	for any $z'$ with $|z'|< 2r_0$. Then, by an integration by parts, (\ref{eq.R1}) gives
	\begin{equation*}
	R_1  = -K_x \int_{\{ |z'|< 2r_0\} } \frac{\partial}{\partial z_j} \bigg[ \tilde{\eta}(z) \tilde{\theta}_\e(z) \widetilde{N}_0(z)f_k \Big( 2\pi i \e^{-1}|k| \frac{\partial}{\partial z_j} F(z') \Big)^{-1} \bigg] e^{2\pi i \e^{-1}|k| F(z') } dz'.
	\end{equation*}
	It follows from (\ref{est.D_jF}) and (\ref{est.tN0}) that
	\begin{equation*}
	\begin{aligned}
	|R_1| &\le \frac{C|f_k| \e}{|k|}\int_{\{ \e^{1/2} < |z'| < 2r_0\}} |\widetilde{N}_0(z)|dz' + \frac{C|f_k|\e^{\frac{1}{2}}}{|k|}\int_{ \{\e^{1/2} < |z'| < 5\e^{1/2} \} } |\widetilde{N}_0(z)|dz' \\
	& \qquad + \frac{C|f_k|\e}{|k|}\int_{\{ \e^{1/2} < |z'| < 2r_0\} } |\nabla \widetilde{N}_0(z)|dz' \\
	& \le \frac{C|f_k|\cdot \e |\ln \e|}{|k|},
	\end{aligned}
	\end{equation*}
	which gives a desired bound for the first case.
	
	{\bf Case 2:} $|n'|\le C_0 |n_d|$ for some $C_0$. Note that this implies $|n_d| \ge 1/\sqrt{1+C_0^2}$. It turns out that $\nabla^2 F(z') = n_d \nabla^2 \phi(z')$ is non-degenerate in $\{ |z'| < 2r_0 \}$. Therefore, there exists at most one point $w' \in \{ |z'|<2r_0 \}$ such that $\nabla F(w') = 0$. Without loss of generality, we may assume $\nabla^2 F(w')$ is diagonal with a minimum eigenvalue $\lambda_0>0$ depending only on the domain $\Omega$ (This can be done by making a rotation for $z'$). It follows that there exists $c_0>0$ such that if $|z' - w'|<c_0$,
	\begin{equation*}
	\frac{\partial^2 F(z')}{\partial {z}_j^2} \ge \frac{99}{100}\lambda_0, \qquad \frac{\partial^2 F(z')}{\partial z_i \partial z_j } \le \frac{1}{100d}\lambda_0, \qquad \text{if } i\neq j.
	\end{equation*}
	This implies that
	\begin{equation}\label{est.Fzj}
	\Big|\frac{\partial F(z')}{\partial z_j}\Big| \ge c|z_j|,
	\end{equation}
	if $|z'|<2r_0$ and $z'-w'\in \mathcal{C}_j$, where $\mathcal{C}_j$ is a cone defined by
	\begin{equation}\label{eq.Cj}
	\mathcal{C}_j: = \Big\{z'\in \mathbb{R}^{d-1}: |z_j|> \frac{1}{2\sqrt{d-1}} |z|  \Big\}.
	\end{equation}
	For each $j = 1,2,\cdots, d-1$, there exists smooth $\rho_j$ supported in $\mathcal{C}_j \setminus B_{\e^{1/2}}(0)$ such that
	\begin{equation*}
	\sum_{j = 1}^{d-1} \rho_j(z') = 1,\qquad \text{for any } z'\in \mathbb{R}^{d-1}\setminus B_{2 \e^{1/2}}(0),
	\end{equation*}
	and $|\nabla^\ell \rho_j(z')|\le C\e^{-\frac{\ell}{2}}$. Let $\rho_0 = 1 - \sum_{j=1}^{d-1} \rho_j$.
	
	To proceed, we write
	\begin{equation*}
	R_1 = \sum_{j =0}^{d-1} K_x \int_{\{ |z'|< 2r_0\} } \rho_j(z'-w') \tilde{\eta}(z) \tilde{\theta}_\e(z) \widetilde{N}_0(z)f_k  e^{2\pi i \e^{-1}|k| F(z')} dz'.
	\end{equation*}
	Let $R_{1}^j$ be the integral above with $j\ge 0$. Since $\rho_0(z'-w')$ is supported in $B_{2\e^{1/2}}(w')$, it is easy to see
	\begin{equation*}
	|R_1^0| \le C|f_k| \int_{B_{2\e^{1/2}}(w')} |\widetilde{N}_0(z)|dz' \le C|f_k| \e^{\frac{1}{2}}
	\end{equation*}
	For $1\le j \le d-1$, applying the integration by parts twice, one has
	\begin{equation*}
	\begin{aligned}
	R_1^j 
	=& K_x (2\pi i \e^{-1}|k| )^{-2} \int_{\{ |z'|< 2r_0\} } e^{2\pi i \e^{-1}|k| F(z')} \\
	&\qquad \times \frac{\partial}{\partial z_j}\bigg[ \Big( \frac{\partial F(z')}{\partial z_j}\Big)^{-1} \frac{\partial}{\partial z_j}\bigg[ \Big( \frac{\partial F(z')}{\partial z_j}\Big)^{-1} \rho_j(z'-w') \tilde{\eta}(z) \tilde{\theta}_\e(z) \widetilde{N}_0(z)f_k \bigg]\bigg]   dz'
	\end{aligned}
	\end{equation*}
	It follows from (\ref{est.Fzj}) and (\ref{eq.Cj}) that
	\begin{equation*}
	\begin{aligned}
	|R_1^j|  &\le \frac{C|f_k| \e^2}{|k|^2} \int_{ \{ \e^{\frac{1}{2}} \le |z'|<2r_0,\ |z'-w'|\ge \e^{\frac{1}{2}} \}} \frac{|\widetilde{N}_0(z')|}{|z'-w'|^4}  dz' \\
	& \qquad + \frac{C|f_k| \e^2}{|k|^2} \int_{ \{ \e^{\frac{1}{2}} \le |z'|<2r_0,\ |z'-w'|\ge \e^{\frac{1}{2}} \}} \frac{\e^{-\frac{1}{2}} |\widetilde{N}_0(z')| + |\nabla \widetilde{N}_0(z')|}{|z'-w'|^3} dz' \\
	& \qquad + \frac{C|f_k| \e^2}{|k|^2} \int_{ \{ \e^{\frac{1}{2}} \le |z'|<2r_0,\ |z'-w'|\ge \e^{\frac{1}{2}} \}} \frac{\e^{-1} |\widetilde{N}_0(z')| + \e^{-\frac{1}{2}}|\nabla \widetilde{N}_0(z')| +  |\nabla^2 \widetilde{N}_0(z')|}{|z'-w'|^2} dz'
	\end{aligned}
	\end{equation*}
	By using the (\ref{est.tN0}), we see that each term above is bounded by $C|k|^{-2} |f_k| \e^{ \frac{1}{2}}$. This gives a desired estimate of $R_1$ in the second case.
	
	Finally, combining the estimates for $R_1, R_2$ and $R_3$, as well as the estimate of $M_\e$, we obtain $|v_\e(x)| \le C|f_k| \e^{\frac{1}{2}}$ in the particular situation that $f(x) = f_k e^{2\pi i k\cdot x}$. For general smooth $1$-periodic function $f = \sum_{k\neq 0} f_k e^{2\pi i k\cdot x}$, the previous estimate leads to
	\begin{equation*}
	|v_\e(x)| \le C\e^{\frac{1}{2}}\sum_{k\neq 0} |f_k| \le C\e^{\frac{1}{2}},
	\end{equation*}
	where the last inequality is valid because $f$ is smooth.
\end{proof}

\begin{theorem}\label{thm.ve.improve}
	Let $\Omega$ and $v_\e$ be the same as Theorem \ref{thm.ve.g}. Then,
	\begin{equation}\label{est.veLp.improved}
	\norm{v_\e }_{L^p(\Omega)} \le \left\{
	\aligned
	&C\e^{\min \{ \frac{1}{2} + \frac{3}{4p}, 1\}- \sigma} \qquad &\text{if } d =3, \\
	&C\e^{ \frac{1}{2} + \frac{1}{p} } \qquad &\text{if } d=4,\\
	&C\e^{ \frac{1}{2} + \frac{3}{2p} } |\ln \e|^{\frac{1}{p}} \qquad &\text{if } d\ge 5,
	\endaligned
	\right.
	\end{equation}
	where $1\le p\le \infty$ and $\sigma>0$ may be arbitrarily small ($\sigma = 0$ if $p = 1$ or $\infty$).
\end{theorem}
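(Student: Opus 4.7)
The plan is to interpolate between endpoint $L^p$ bounds. At $p=\infty$, the estimate \eqref{est.ve.infty} already provides $\norm{v_\e}_{L^\infty(\Omega)}\le C\e^{1/2}$. At $p=1$, Theorem \ref{thm.ve.g} supplies $C\e$, $C\e^{3/2}$, and $C\e^{2}|\ln\e|$ for $d=3,4,\ge 5$ respectively. Since both endpoints are attained without a loss $\sigma$, the parenthetical remark in the statement is automatic, and the heart of the proof is to treat $1<p<\infty$.

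For $d\ge 4$ and $1<p<\infty$, I will use the elementary interpolation inequality
\begin{equation*}
\norm{v_\e}_{L^p(\Omega)}\le \norm{v_\e}_{L^1(\Omega)}^{1/p}\,\norm{v_\e}_{L^\infty(\Omega)}^{1-1/p}.
\end{equation*}
Substituting the endpoint bounds, one obtains for $d=4$ the exponent $\tfrac{3}{2p}+\tfrac{1}{2}(1-\tfrac{1}{p})=\tfrac{1}{2}+\tfrac{1}{p}$, and for $d\ge 5$ the exponent $\tfrac{2}{p}+\tfrac{1}{2}(1-\tfrac{1}{p})=\tfrac{1}{2}+\tfrac{3}{2p}$ with an extra factor $|\ln\e|^{1/p}$, which matches \eqref{est.veLp.improved}.

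For $d=3$ this naive two-endpoint interpolation gives only $\e^{1/2+1/(2p)}$, which is weaker than the claimed $\e^{1/2+3/(4p)-\sigma}$. The extra gain of $\e^{1/(4p)}$ should come from the gradient estimate \eqref{est.ve.W1q} at $p=1$, namely $\norm{\nabla v_\e}_{L^1(\Omega)}\le C_\sigma\e^{1-\sigma}$. I would insert an intermediate bound at $p=3/2$: since $d/(d-1)=3/2$ when $d=3$, the Sobolev embedding $W^{1,1}(\Omega)\hookrightarrow L^{3/2}(\Omega)$ (valid on bounded smooth domains) yields
\begin{equation*}
\norm{v_\e}_{L^{3/2}(\Omega)}\le C\bigl(\norm{\nabla v_\e}_{L^1(\Omega)}+\norm{v_\e}_{L^1(\Omega)}\bigr)\le C_\sigma\e^{1-\sigma},
\end{equation*}
using also $\norm{v_\e}_{L^1(\Omega)}\le C\e$ from Theorem \ref{thm.ve.g}. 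Then for $1\le p\le 3/2$ H\"older's inequality gives $\norm{v_\e}_{L^p(\Omega)}\le C\norm{v_\e}_{L^{3/2}(\Omega)}\le C_\sigma\e^{1-\sigma}$, agreeing with the capped value $\min\{\tfrac{1}{2}+\tfrac{3}{4p},1\}=1$ on this range. For $3/2<p<\infty$, interpolating between $L^{3/2}$ and $L^\infty$,
\begin{equation*}
\norm{v_\e}_{L^p(\Omega)}\le \norm{v_\e}_{L^{3/2}(\Omega)}^{3/(2p)}\,\norm{v_\e}_{L^\infty(\Omega)}^{1-3/(2p)}\le C_\sigma\e^{(1-\sigma)\cdot 3/(2p)+(1/2)(1-3/(2p))}=C_\sigma\e^{1/2+3/(4p)-\sigma'},
\end{equation*}
which is precisely the stated bound (after absorbing $\sigma$).

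The routine part is the algebra of the interpolation exponents; the only conceptual point is the choice of the intermediate space $L^{3/2}$ for $d=3$, dictated by the borderline Sobolev exponent $d/(d-1)$ for which the $L^1$ gradient estimate is available. No further ingredient beyond Theorem \ref{thm.ve.g}, the new $L^\infty$ estimate \eqref{est.ve.infty}, and standard Sobolev/H\"older inequalities is required.
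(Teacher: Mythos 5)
Your proof is correct and follows essentially the same route as the paper's: for $d\ge 4$ interpolate directly between the $L^1$ bound of Theorem \ref{thm.ve.g} and the new $L^\infty$ bound \eqref{est.ve.infty}, and for $d=3$ first upgrade to $L^{3/2}$ via the Sobolev embedding $W^{1,1}\hookrightarrow L^{3/2}$ (using \eqref{est.ve.W1q} at $p=1$), then interpolate with $L^\infty$; the algebraic bookkeeping of exponents and of the $\sigma$ loss (only for $d=3$) matches the paper.
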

\begin{proof}
	If $d=3$, the estimate $\norm{\nabla v_\e}_{L^1(\Omega)} \le C\e^{1-\sigma}$ and the Sobolev inequality implies that
	\begin{equation}\label{est.L1.5}
	\norm{v_\e}_{L^{\frac{3}{2}}(\Omega)} \le C\e^{1-\sigma}.
	\end{equation}
	Thus, the same estimate holds for $\norm{v_\e }_{L^p(\Omega)}$ for any $1< p<\frac{3}{2}$. If $\frac{3}{2}<p<\infty$, interpolating the estimate (\ref{est.L1.5}) and (\ref{est.ve.infty}) gives
	\begin{equation*}
	\norm{v_\e}_{L^p(\Omega)} \le C\e^{\frac{1}{2}+\frac{3}{4p}}.
	\end{equation*}
	
	For $d = 4$ or $d\ge 5$, an interpolation between (\ref{est.ve.Lp}) with $p = 1$ and (\ref{est.ve.infty}) gives the desired estimates.
\end{proof}

\begin{remark}
	For the special case $p = 2$, the above theorem gives
	\begin{equation}\label{est.veL2.sharp}
	\norm{v_\e }_{L^2(\Omega)} \le \left\{
	\aligned
	&C\e^{\frac{7}{8}- \sigma} \qquad &\text{if } d =3, \\
	&C\e \qquad &\text{if } d= 4,\\
	&C\e^{\frac{5}{4}}|\ln \e|^{\frac{1}{2}} \qquad &\text{if } d\ge 5,
	\endaligned
	\right.
	\end{equation}
	which is a significant improvement of (\ref{est.ve.L2}). In this paper, the expected convergence rates for the Robin problem (\ref{Robin}) cannot be better than (\ref{est.veL2.sharp}) in any dimensions. 
\end{remark}

The following estimate for the trace of $v_\e$ will also be useful.
\begin{lemma}
	Let $\Omega$ and $v_\e$ be the same as Theorem \ref{thm.ve.g}. Then for any $1\le p\le \infty$,
	\begin{equation}\label{est.ve34d}
		\norm{v_\e}_{L^p(\partial\Omega)} \le \left\{ 
		\begin{aligned}
		 &C\e^{\frac{1}{2}+\frac{1}{2p}-\sigma}, \quad &\text{if }& d = 3,4,\\
		 &C\e^{\min \{ \frac{1}{2}+\frac{1}{p}, 1\} -\sigma}, \quad &\text{if }& d\ge 5.
		\end{aligned}
		\right.
	\end{equation}
\end{lemma}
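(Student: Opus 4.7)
The strategy is to establish the two endpoint estimates $p=1$ and $p=\infty$, supplemented by an intermediate $L^2$ estimate in the range $d\ge 5$, and then deduce the general claim by log-convex interpolation of $L^p$ norms on $\partial\Omega$.

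The $L^\infty$ endpoint is immediate: since $\|v_\e\|_{L^\infty(\Omega)} \le C\e^{1/2}$ was just proved and $v_\e$ is continuous on $\overline{\Omega}$, restriction to the boundary yields $\|v_\e\|_{L^\infty(\partial\Omega)} \le C\e^{1/2}$. For the $L^1$ endpoint, the standard trace inequality
\begin{equation*}
\|v_\e\|_{L^1(\partial\Omega)} \le C\bigl(\|v_\e\|_{L^1(\Omega)} + \|\nabla v_\e\|_{L^1(\Omega)}\bigr),
\end{equation*}
combined with (\ref{est.ve.W1q}) and the $p=1$ case of (\ref{est.veLp.improved}), gives $\|v_\e\|_{L^1(\partial\Omega)} \le C\e^{1-\sigma}$ in every dimension $d\ge 3$. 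For $d=3,4$ these two endpoints already suffice: log-convex interpolation produces
\begin{equation*}
\|v_\e\|_{L^p(\partial\Omega)} \le \|v_\e\|_{L^1(\partial\Omega)}^{1/p}\|v_\e\|_{L^\infty(\partial\Omega)}^{1-1/p} \le C\e^{1/2 + 1/(2p) - \sigma/p},
\end{equation*}
which is the stated rate after relabelling $\sigma$.

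For $d\ge 5$ the $L^1$--$L^\infty$ interpolation is insufficient (it only gives $\e^{3/4-\sigma}$ at $p=2$), so a separate estimate $\|v_\e\|_{L^2(\partial\Omega)} \le C\e^{1-\sigma}$ is needed. Starting from the trace identity
\begin{equation*}
\int_{\partial\Omega} v_\e^2 \, d\sigma \le C\int_\Omega |v_\e|\,|\nabla v_\e|\, dx + C\int_\Omega v_\e^2\, dx,
\end{equation*}
(obtained by applying the divergence theorem to $v_\e^2\vec{\eta}$, where $\vec{\eta}$ is a smooth extension of the outward unit normal from $\partial\Omega$ into $\Omega$), H\"older on the first integral with conjugate exponents $(r',r)$ together with $\|v_\e\|_{L^{r'}(\Omega)} \le C\e^{1/2 + 3/(2r')}|\ln\e|^{1/r'}$ from Theorem \ref{thm.ve.improve} and $\|\nabla v_\e\|_{L^r(\Omega)} \le C\e^{1/r-\sigma}$ from Theorem \ref{thm.ve.g} produces a product whose $\e$-exponent equals $3/2 + 1/(2r') - \sigma$. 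This approaches $2$ as $r'\downarrow 1$ (equivalently $r\uparrow\infty$); choosing $r$ large enough absorbs the residual $O(1/r)$ error into $\sigma$. Since the bulk term $\|v_\e\|_{L^2(\Omega)}^2 \le C\e^{5/2}|\ln\e|$ is smaller, one obtains $\|v_\e\|_{L^2(\partial\Omega)} \le C\e^{1-\sigma}$.

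With this third anchor point in hand, the claim for $d\ge 5$ is completed by two interpolations: on $1\le p\le 2$, between $L^1(\partial\Omega)$ and $L^2(\partial\Omega)$ (both bounded by $C\e^{1-\sigma}$), giving $C\e^{1-\sigma}$ throughout (which agrees with $\min\{1/2+1/p,1\}=1$ in that range); on $2\le p\le \infty$, between $L^2(\partial\Omega) \le C\e^{1-\sigma}$ and $L^\infty(\partial\Omega) \le C\e^{1/2}$, yielding the exponent $1/2 + 1/p - 2\sigma/p$, matching $\min\{1/2+1/p,1\} = 1/2 + 1/p$. The main obstacle is the $L^2$ upgrade for $d\ge 5$, which depends critically on the higher-dimensional interior $L^1$ improvement $\|v_\e\|_{L^1(\Omega)}\le C\e^2|\ln\e|$ characteristic of sharper oscillatory decay in higher dimensions; without it the $p=2$ boundary rate could not exceed $\e^{3/4-\sigma}$.
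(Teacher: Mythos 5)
Your proof is correct and essentially the same as the paper's: both establish the endpoints $\|v_\e\|_{L^1(\partial\Omega)}\le C\e^{1-\sigma}$ via the trace inequality $\|g\|_{L^1(\partial\Omega)}\le C(\|g\|_{L^1(\Omega)}+\|\nabla g\|_{L^1(\Omega)})$ and $\|v_\e\|_{L^\infty(\partial\Omega)}\le C\e^{1/2}$ from (\ref{est.ve.infty}), interpolate for $d=3,4$, and for $d\ge 5$ insert the intermediate bound $\|v_\e\|_{L^2(\partial\Omega)}\le C\e^{1-\sigma}$ obtained from a trace inequality of the form $\|g\|_{L^2(\partial\Omega)}^2\lesssim\|g\|_{L^q(\Omega)}\|\nabla g\|_{L^{q'}(\Omega)}+\|g\|_{L^2(\Omega)}^2$ with the Hölder exponent pushed toward $1$. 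The only cosmetic difference is that you derive the $L^2$-trace inequality directly from the divergence theorem applied to $v_\e^2\vec\eta$, whereas the paper states it as (\ref{est.trace.f}); the interpolation bookkeeping on both ranges $[1,2]$ and $[2,\infty]$ is carried out correctly.
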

\begin{proof}
	The lemma is a straightforward of Theorem \ref{thm.ve.improve} and (\ref{est.ve.W1q}). First of all, for any $g \in C_0^\infty(\RD)$, one has
	\begin{equation*}
	\norm{g}_{L^1(\partial\Omega)} \le C\norm{g}_{L^1(\Omega)} + C\norm{\nabla g}_{L^1(\Omega)}.
	\end{equation*}
	Applying $g = v_\e$ to the above inequality and using Theorem \ref{thm.ve.g}, we have $\norm{v_\e}_{L^1(\partial\Omega)} \le C\e^{1-\sigma}$. Then the general estimates of $\norm{v_\e}_{L^p(\Omega)}$ for $d = 3$ or $4$ follows from an interpolation with (\ref{est.ve.infty}).

	To handle the case $d\ge 5$, we claim that for any $q\in (1,\infty)$,
	\begin{equation}\label{est.trace.f}
		\norm{g}_{L^2(\partial\Omega)} \le C\norm{g}_{L^2(\Omega)} + C\norm{g}_{L^{q}(\Omega)}^{\frac{1}{2}}  \norm{\nabla g}_{L^{q'}(\Omega)}^{\frac{1}{2}}.
	\end{equation}
	It turns out that the above trace estimate implies $\norm{v_\e}_{L^2(\partial\Omega)} \le C\e^{1-\sigma}$ for $d\ge 5$. Actually, applying (\ref{est.trace.f}) to $v_\e$ and using Theorem \ref{thm.ve.g}, we obtain that
	\begin{equation*}
		\begin{aligned}
			\norm{v_\e}_{L^2(\partial\Omega)} & \le C\norm{v_\e}_{L^2(\Omega)} + C\norm{v_\e}_{L^{q}(\Omega)}^{\frac{1}{2}}  \norm{\nabla v_\e}_{L^{q'}(\Omega)}^{\frac{1}{2}} \\
			& \le C\e|\ln \e|^{\frac{1}{2}} + C \e^{\frac{1}{q}} |\ln \e|^{\frac{1}{2q}} \e^{\frac{1}{2q'}} \\
			& \le C \e^{\frac{1}{q}}.
		\end{aligned}
	\end{equation*}
	The desired estimate follows by choosing $q$ sufficiently close to $1$ such that $\frac{1}{q} = 1-\sigma$. Finally, the estimate of $\norm{v_\e}_{L^p(\partial\Omega)}$ for $p>2$ follows from the interpolation with (\ref{est.ve.infty}).
\end{proof}

\section{Convergence Rates}
This section is devoted to the convergence rates for the Robin problem (\ref{Robin}) under perfect conditions. Precisely, we assume that $b \in C^\infty(\mathbb{T}^d;\mathbb{R}^{m\times m})$ and $\Omega$ is a smooth and strictly convex domain. Since we do not impose any regularity on the matrix $A$, as usual, we need a smoothing operator to deal with this situation. Let $\phi\in C^\infty_0(B_1(0))$ such that $\int \phi = 1$. Define $\phi_\e(x) = \e^{-d} \phi(\e^{-1} x)$ and
\begin{equation*}
S_\e(f)(x) = \int_{\RD} \phi_\e(y) f(x-y) dy.
\end{equation*}
Many useful properties of operator $S_\e$ may be found in, e.g., \cite{Shen17,ShenBook}.

Assume $u_\e$ and $u_0$ are the solutions of (\ref{Robin}) and the corresponding homogenized system (\ref{eq.A0u}). First of all, we construct the first-order expansion and establish the convergence rate in $H^1$. Let $\eta_\e$ be a cut-off function such that $\eta_\e = 1$ in $\{x\in \Omega: \text{dist}(x,\partial\Omega) > 2\e \}$, $\eta = 0$ in $\{x\in \Omega: \text{dist}(x,\partial\Omega) < \e \}$ and $|\nabla \eta_\e| \le C\e^{-1}$. Define the error of the first-order approximation by
\begin{equation}\label{eq.we}
w_\e = u_\e - u_0 - \e \chi(x/\e) S_\e(  \eta_\e\nabla u_0) .
\end{equation}
By the variational equations for $u_\e$ and $u_0$, for any $\varphi\in H^1(\Omega;\RM)$, one has
	\begin{equation*}
	\int_{\Omega} A_\e \nabla u_\e \cdot \nabla \varphi + \int_{\partial\Omega} b_\e u_\e \cdot \varphi = \int_{\Omega} \widehat{A} \nabla u_0 \cdot \nabla \varphi + \int_{\partial\Omega} \overline{b} u_0 \cdot \varphi,
	\end{equation*}
	where $A_\e(x) = A(x/\e)$. Then, a straightforward computation shows that $w_\e$ satisfies
	\begin{equation}\label{eq.splitInt}
	\begin{aligned}
	\int_{\Omega} A_\e \nabla w_\e \cdot \nabla \varphi + \int_{\partial\Omega} b_\e w_\e \cdot \varphi
	&  = \int_{\Omega} (\widehat{A} - A_\e - A_\e \nabla \chi(x/\e)) S_\e (\eta_\e \nabla u_0) \cdot \nabla \varphi \\
	& \qquad + \int_{\Omega} (\widehat{A} - A_\e) (\nabla u_0 -  S_\e( \eta_\e \nabla u_0 ))\cdot \nabla \varphi \\
	& \qquad + \int_{\Omega} \e A_\e \chi(x/\e) \nabla S_\e (\eta_\e \nabla u_0) \cdot \nabla \varphi \\
	& \qquad + \int_{\partial\Omega} (\overline{b} - b(x/\e)) u_0 \cdot \varphi \\
	& = I_1 + I_2 + I_3 + I_4.
	\end{aligned}
	\end{equation}
	The estimates for $I_1, I_2$ and $I_3$ follow from the same argument as Dirichlet or Neumann problems; see, e.g., \cite[Lemma 3.5]{SZ17}. Indeed, one can prove that
	\begin{equation}\label{est.I123}
	|I_1| + |I_2| + |I_3| \le C\e^{\frac{1}{2}} \norm{u_0}_{H^2(\Omega)} \big( \e^{\frac{1}{2}} \norm{\nabla \varphi}_{L^2(\Omega)} + \norm{\nabla \varphi}_{L^2(\Omega_{2\e})} \big),
	\end{equation}
	where $\Omega_{t} = \{ x\in \Omega: \text{dist}(x,\partial\Omega)<t \}$.
	
	The main difficulty caused by the Robin boundary condition is the estimate of $I_4$. To handle this term, we write
	\begin{equation}\label{eq.I4}
	I_4 = \int_{\partial\Omega} (\overline{b} - b(x/\e) - M_\e) u_0 \cdot \varphi +  \int_{\partial\Omega} M_\e u_0 \cdot \varphi,
	\end{equation}
	where $M_\e \in \mathbb{R}^{m\times m}$ is a constant matrix given by
	\begin{equation*}
	M_\e = \dashint_{\partial\Omega} (\overline{b} - b(x/\e) ) d\sigma(x).
	\end{equation*}
	Since $\Omega$ is strictly convex and smooth, we may employ the classical results in oscillatory integral theory (with non-degenerate phases) to obtain
	\begin{equation}\label{est.Me}
	|M_\e| \le C\e^{\frac{d-1}{2}},
	\end{equation}
	where the constant $C$ depends only on $b$ and $\Omega$. It follows that
	\begin{equation}\label{est.Meu0}
	\bigg| \int_{\partial\Omega} M_\e u_0 \cdot \varphi \bigg| \le C\e \norm{u_0}_{L^2(\partial\Omega)} \norm{\varphi}_{L^2(\partial\Omega)}.
	\end{equation}
	for $d\ge 3$.
	
	Thus, it is sufficient to consider the first integral of (\ref{eq.I4}), namely,
	\begin{equation*}
	I_{\text{osc}} = \int_{\partial\Omega} (\overline{b} - b(x/\e) - M_\e) u_0 \cdot \varphi.
	\end{equation*}
	As we have mentioned, this can be done by a ``duality approach" via a Neumann problem. For each $\beta$ with $1\le \beta \le m$, let $b^{\beta} = (b^{1\beta}, b^{2\beta}, \cdots, b^{m\beta})$. Let $v_\e^\beta$ be the solution of
	\begin{equation}\label{eq.ve.b}
	\left\{
	\aligned
	-\text{div}(\widehat{A} \nabla v_\e^\beta) & =0 \quad &\text{ in }& \Omega,\\
	n\cdot \widehat{A} \nabla v_\e^\beta & =\overline{b}^\beta - b^\beta(x/\e) - M_\e^\beta \quad &\text{ on }& \partial\Omega.
	\endaligned
	\right.
	\end{equation}
	where $\bar{b}^\beta$ and $M_\e^\beta$ are the $\beta$th column of $\bar{b}$ and $M_\e$. Under the assumption that $\Omega$ is strictly convex and smooth, all the estimates in the previous section are valid for $v_\e = (v_\e^1,v_\e^2,\cdots, v_\e^m)$.
	
	\begin{lemma}\label{lem.Iosc.H1}
		Let $\varphi \in H^1(\Omega;\RM)$.
		
		(i) If $u_0\in W^{1,d}(\Omega;\RM)$, then
		\begin{equation*}
		|I_{\rm{osc}}| \le C\e^{\frac{1}{2} - \sigma}  \norm{u_0}_{W^{1,d}(\Omega)}  \norm{ \varphi}_{H^1(\Omega)}.
		\end{equation*}
		
		(ii) If $u_0\in H^2(\Omega;\RM)$, then
		\begin{equation}
		|I_{\rm{osc}}| \le \left\{
		\aligned
		&C\e^{\frac{2}{3}- \sigma}\norm{u_0}_{H^2(\Omega)} \norm{\varphi}_{H^1(\Omega)} \qquad &\text{if } d =3, \\
		&C\e^{\frac{1}{2}-\sigma}\norm{u_0}_{H^2(\Omega)} \norm{\varphi}_{H^1(\Omega)} \qquad &\text{if } d = 4,\\
		&C\e^{\frac{1}{4} + \frac{3}{4(d-1)} - \sigma }\norm{u_0}_{H^2(\Omega)} \norm{\varphi}_{H^1(\Omega)} \qquad &\text{if } d \ge 5.
		\endaligned
		\right.
		\end{equation}
	\end{lemma}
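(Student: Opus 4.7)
The plan is to base both parts on the duality identity afforded by the auxiliary Neumann system (\ref{eq.ve.b}). For each $1\le\beta\le m$, testing (\ref{eq.ve.b}) componentwise against $\psi^{\alpha}=u_0^{\beta}\varphi^{\alpha}$ and summing on $\beta$ produces
\[
I_{\mathrm{osc}}=\int_{\Omega}\widehat{A}\nabla v_\e\cdot\nabla(u_0\varphi)\,dx=\int_{\Omega}\widehat{A}\nabla v_\e\cdot\varphi\nabla u_0\,dx+\int_{\Omega}\widehat{A}\nabla v_\e\cdot u_0\nabla\varphi\,dx=:J_1+J_2,
\]
valid because $u_0\varphi\in H^1(\Omega;\RM)$ in both settings. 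This transfers the oscillation entirely onto $\nabla v_\e$, whose sharp $L^p$-rates were established in Section 4.

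For part (i) I estimate $J_1$ and $J_2$ directly by H\"older. Using $\norm{\nabla v_\e}_{L^2(\Omega)}\le C\e^{1/2-\sigma}$ from (\ref{est.ve.W1q}), the embedding $H^1(\Omega)\hookrightarrow L^{2d/(d-2)}(\Omega)$ for $\varphi$, and $W^{1,d}(\Omega)\hookrightarrow L^q(\Omega)$ (every finite $q$) for $u_0$, both pieces are controlled by $C\e^{1/2-\sigma}\norm{u_0}_{W^{1,d}(\Omega)}\norm{\varphi}_{H^1(\Omega)}$.

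For part (ii) the term $J_1$ is again handled by the same H\"older scheme, now using $H^2\hookrightarrow W^{1,2d/(d-2)}$ together with the appropriate $L^p$-rate of $\nabla v_\e$ from (\ref{est.ve.W1q}); this already meets the claimed exponent for $J_1$ in every dimension. The term $J_2$ resists direct estimation because pairing $\nabla v_\e$ against $u_0\nabla\varphi$ cannot access the much stronger $L^p$-rates of $v_\e$ itself provided by Theorem~\ref{thm.ve.improve}. To bring those rates into play, I would regularize $\varphi$ to $\varphi^{\delta}=S_{\delta}\varphi\in C^{\infty}$, integrate by parts in $\int\widehat{A}\nabla v_\e\cdot u_0\nabla\varphi^{\delta}$ to move a derivative off $\nabla\varphi^{\delta}$ onto $v_\e$ (producing a bulk piece involving $v_\e u_0\,\mathrm{div}(\widehat{A}^T\nabla\varphi^{\delta})$ together with a boundary term), integrate by parts once more in that bulk piece to shift the derivative onto $v_\e u_0$, and then let $\delta\to0$. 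The two matching boundary contributions carrying $v_\e u_0\,(n\cdot\widehat{A}^T\nabla\varphi^{\delta})$ cancel, leaving a representation of the form
\[
I_{\mathrm{osc}}=-\int_\Omega v_\e\,(\mathrm{div}(\widehat{A}^T\nabla u_0))\varphi\,dx+\int_\Omega v_\e\,(\nabla u_0\cdot\widehat{A}^T\nabla\varphi)\,dx+\int_\Omega(\nabla v_\e)u_0\cdot\widehat{A}^T\nabla\varphi\,dx+\int_{\partial\Omega}v_\e\,\tfrac{\partial u_0}{\partial\nu_0^T}\varphi\,d\sigma.
\]
Each of the four pieces is then H\"older-estimated using the improved bulk rates for $v_\e$ from Theorem~\ref{thm.ve.improve}, the trace rates (\ref{est.ve34d}), the rates (\ref{est.ve.W1q}) for $\nabla v_\e$, and the Sobolev embeddings of $H^2$ and $H^1$.

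The principal obstacle is the interior term $\int(\nabla v_\e)u_0\cdot\nabla\varphi$: it is the only one still carrying $\nabla v_\e$ and therefore dictates the sharpness of the bound. In dimensions $d=3,4$ the integrability provided by the $H^2$-embedding ($L^\infty$ for $d=3$, $L^q$ for every $q<\infty$ for $d=4$) keeps it within the targets $\e^{2/3-\sigma}$ and $\e^{1/2-\sigma}$; for $d=3$ in particular one combines this with the $J_1$ split $\norm{\nabla v_\e}_{L^{3/2}}\le C\e^{2/3-\sigma}$ against $\varphi\nabla u_0\in L^3(\Omega)$. For $d\ge5$ the only integrability available for $u_0$ is $H^2\hookrightarrow L^{2d/(d-4)}$, and the balance between this weak integrability, the $L^{d/2}$-rate of $\nabla v_\e$, and the refined boundary bound (\ref{est.ve34d}) forces the dimension-dependent exponent $\e^{1/4+3/(4(d-1))-\sigma}$.
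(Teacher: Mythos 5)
Your treatment of part (i) coincides with the paper's: integrate by parts once, split $I_{\mathrm{osc}}=J_1+J_2$, and estimate by H\"older using (\ref{est.ve.W1q}). For part (ii) there is a genuine gap.

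\textbf{The integration-by-parts manoeuvre is circular.} Starting from $J_2=\int_\Omega \widehat{A}\nabla v_\e\cdot u_0\nabla\varphi^\delta$ and integrating by parts twice as you describe, the second integration moves the derivative off $\nabla^2\varphi^\delta$ back onto $v_\e u_0$; after expanding $\nabla(v_\e u_0)=u_0\nabla v_\e+v_\e\nabla u_0$ the bulk term $\int u_0\nabla v_\e\cdot\widehat{A}^T\nabla\varphi^\delta$ reappears verbatim, while the other new pieces merely cancel against the reformulation of $J_1$. Indeed your claimed representation
\[
I_{\mathrm{osc}}=-\int_\Omega v_\e\,\mathrm{div}(\widehat{A}^T\nabla u_0)\varphi+\int_\Omega v_\e(\nabla u_0\cdot\widehat{A}^T\nabla\varphi)+\int_\Omega(\nabla v_\e)u_0\cdot\widehat{A}^T\nabla\varphi+\int_{\partial\Omega}v_\e\,\tfrac{\partial u_0}{\partial\nu_0^T}\varphi\,d\sigma
\]
still carries $J_2$ intact as the third term (the first, second, and fourth terms are exactly one integration-by-parts of $J_1$), so no information beyond what H\"older on $J_1+J_2$ already gives has been extracted.

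\textbf{The exponent for $d\ge5$ is not attainable by these ingredients.} Once you are reduced to estimating $J_2=\int_\Omega\nabla v_\e\,u_0\,\nabla\varphi$ by H\"older, the only possible balance with $\nabla\varphi\in L^2$, $u_0\in L^{2d/(d-4)}$ (from $H^2$, $d\ge5$), and $\nabla v_\e\in L^{d/2}$ yields $\e^{2/d-\sigma}$, which is strictly smaller than $1/4+3/(4(d-1))$ for every $d\ge5$ (e.g.\ $0.4$ vs $0.4375$ at $d=5$). The trace estimate (\ref{est.ve34d}) cannot help here, since $J_2$ is an interior integral involving $\nabla v_\e$, not $v_\e$ on the boundary. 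Your assertion that the same H\"older scheme on $J_1$ "already meets the claimed exponent for $J_1$ in every dimension'' is also incorrect for $d\ge5$: it gives $\e^{2/d}$ as well. The claim that "the balance \dots\ forces the dimension-dependent exponent $\e^{1/4+3/(4(d-1))-\sigma}$'' is asserted, not shown, and is in fact false with these tools.

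\textbf{The missing idea.} The paper does not estimate $J_1$ and $J_2$ separately at all for $d\ge5$. Instead it introduces the solution $\Phi$ of the Dirichlet problem $-\mathrm{div}(\widehat A^*\nabla\Phi)=0$ with $\Phi=u_0\varphi$ on $\partial\Omega$, applies Green's second identity to write $I_{\mathrm{osc}}=\int_{\partial\Omega}v_\e\,(n\cdot\widehat A^*\nabla\Phi)\,d\sigma$, then uses the $W^{1,p_0}(\partial\Omega)\to L^{p_0}(\partial\Omega)$ boundedness of the Dirichlet-to-Neumann map together with the boundary-Sobolev product estimate $\|u_0\varphi\|_{W^{1,p_0}(\partial\Omega)}\le C\|u_0\|_{H^{3/2}(\partial\Omega)}\|\varphi\|_{H^1(\partial\Omega)}$, $1/p_0=1-3/(2(d-1))$, and finally interpolates against the trivial $L^2(\partial\Omega)$ bound. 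This is what produces $1/4+3/(4(d-1))$. Because all derivatives are transferred to $\Phi$, only the trace norm $\|v_\e\|_{L^{p_0'}(\partial\Omega)}$ from (\ref{est.ve34d}) is needed --- precisely the quantity your interior term $J_2$ cannot access. Without this duality-via-$\Phi$ step, the dimension-dependent exponent in (ii) is out of reach.
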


	\begin{proof}
(i) By the construction of $v_\e$ in (\ref{eq.ve.b}) and the integration by parts, one has
	\begin{equation*}
	\begin{aligned}
	|I_{\rm{osc}}| & = \bigg| \int_{\partial\Omega} (n\cdot \widehat{A}\nabla v_\e) u_0 \cdot \varphi \bigg| \\
	&  = \bigg| \int_{\Omega} \nabla v_\e \nabla u_0  \cdot \varphi + \int_{\Omega} \nabla v_\e u_0 \cdot \nabla\varphi \bigg|\\
	&  \le C\big(  \norm{\nabla v_\e}_{L^{2}(\Omega)} \norm{\nabla u_0}_{L^d(\Omega)} + \norm{\nabla v_\e}_{L^{2+\sigma}(\Omega)} \norm{ u_0}_{L^{\frac{4}{\sigma}+2}(\Omega)}\big) \norm{\varphi}_{H^1(\Omega)} \\
	& \le C\e^{\frac{1}{2} - \sigma} \norm{u_0}_{W^{1,d}(\Omega)} \norm{ \varphi}_{H^1(\Omega)},
	\end{aligned}
	\end{equation*}
	where we have used the Sobolev inequality and (\ref{est.ve.W1q}) with appropriate $\sigma$ in the last two inequalities.
	
	(ii) If $u_0\in H^2(\Omega;\RM)$, the argument in (i) only gives $|I_{\text{osc}}| \le C\e^{\frac{2}{d}-\sigma} \norm{u_0}_{H^2(\Omega)} \norm{\varphi}_{H^1(\Omega)}$, which gives the desired estimates for $d = 3,4$. In the following, we use a more careful argument to improve this estimate for $d\ge 5$. We first assume $\varphi \in H^1(\partial\Omega;\RM)$. Then the Sobolev inequality implies $u_0 \varphi \in W^{1,p_0}(\partial\Omega;\RM)$, where $\frac{1}{p_0} = 1-\frac{3}{2(d-1)}$, and
	\begin{equation}\label{est.u0varphi.Lp}
	\norm{u_0 \varphi}_{W^{1,p_0}(\partial\Omega)} \le C\norm{u_0}_{H^{\frac{3}{2}}(\partial\Omega)} \norm{\varphi}_{H^1(\partial\Omega)}.
	\end{equation}
	
	Now, we construct $\Phi = (\Phi^\beta)$ as the solution of
	\begin{equation}\label{eq.Phi}
	\left\{
	\aligned
	-\text{div}(\widehat{A}^* \nabla \Phi^\beta) & =0 \quad &\text{ in }& \Omega,\\
	\Phi^\beta & =u_0^\beta \varphi \quad &\text{ on }& \partial\Omega.
	\endaligned
	\right.
	\end{equation}
	By the regularity theory of (\ref{eq.Phi}) with $W^{1,p}$ Dirichlet boundary value in smooth domains, we have
	\begin{equation}\label{est.Phi.Lp}
	\norm{\nabla \Phi}_{L^{p_0}(\partial\Omega)} \le \norm{\mathcal{N}(\nabla \Phi)}_{L^{p_0}(\partial\Omega)} \le C\norm{\nabla_{\text{tan}} (u_0 \varphi)}_{L^{p_0}(\partial\Omega)}.
	\end{equation}
	where $\mathcal{N}(\nabla \Phi)$ is the non-tangential maximal function defined by
	\begin{equation*}
	\mathcal{N}(f)(Q) = \sup \{ |f(x)|: x\in \Omega, |x-Q|< (1+\alpha) \text{dist}(x,\partial\Omega) \},
	\end{equation*}
	where $\alpha>0$ is a fixed constant. Particularly, (\ref{est.Phi.Lp}) implies that the Dirichlet-to-Neumann map $\Phi^\beta \mapsto n\cdot \widehat{A}^* \nabla \Phi^\beta$ is bounded from $W^{1,p_0}(\partial\Omega;\RM)$ to $L^{p_0}(\partial\Omega;\RM)$. Also, it follows from the Green's second identity that
	\begin{equation*}
	I_{\text{osc}} = \int_{\partial\Omega} (n\cdot \widehat{A}\nabla v_\e) \cdot \Phi = \int_{\partial\Omega}  v_\e \cdot (n\cdot \widehat{A}^* \nabla \Phi).
	\end{equation*}
	As a result, if $d\ge 5$, (\ref{est.Phi.Lp}) and (\ref{est.u0varphi.Lp}) imply that
	\begin{equation}\label{est.Iosc.H1}
	\begin{aligned}
	|I_{\text{osc}}| &\le C\norm{v_\e}_{L^{p_0'}(\partial\Omega)} \norm{\nabla \Phi}_{L^{p_0}(\partial\Omega)} \\
	& \le C\e^{\frac{1}{2} + \frac{1}{p_0'} -\sigma} \norm{u_0}_{H^{\frac{3}{2}}(\partial\Omega)} \norm{\varphi}_{H^1(\partial\Omega)} \\
	& \le C\e^{\frac{1}{2} + \frac{3}{2(d-1)} -\sigma} \norm{u_0}_{H^{\frac{3}{2}}(\partial\Omega)} \norm{\varphi}_{H^1(\partial\Omega)},
	\end{aligned}
	\end{equation}
	where we have used (\ref{est.ve34d}) and the fact that $\frac{1}{p_0'} = \frac{3}{2(d-1)}$.
	
	Clearly, note that
	\begin{equation}\label{est.Iosc.L2}
	|I_{\text{osc}}| \le C \norm{u_0}_{L^2(\partial\Omega)} \norm{\varphi}_{L^2(\partial\Omega)}.
	\end{equation}
	Thus, an interpolation between (\ref{est.Iosc.H1}) and (\ref{est.Iosc.L2}) gives
	\begin{equation*}
	|I_\text{osc}| \le C\e^{\frac{1}{4} + \frac{3}{4(d-1)}} \norm{u_0}_{H^{\frac{3}{2}}(\partial\Omega)} \norm{\varphi}_{H^\frac{1}{2}(\partial\Omega)} \le C\e^{\frac{1}{4} + \frac{3}{4(d-1)}} \norm{u_0}_{H^2(\Omega)} \norm{\varphi}_{H^1(\Omega)}.
	\end{equation*}
	This ends the proof.
\end{proof}

\begin{theorem}\label{thm.we.H1}
	Let $u_\e$ and $u_0$ be the same as before. Then
	
	(i) If $u_0\in H^2(\Omega;\RM)$,
	\begin{equation*}
	\norm{w_\e}_{H^1(\Omega)} \le \left\{
	\aligned
	&C\e^{\frac{1}{2}}\norm{u_0}_{H^2(\Omega)} \qquad &\text{if } d =3, \\
	&C\e^{\frac{1}{2}-\sigma}\norm{u_0}_{H^2(\Omega)}  \qquad &\text{if } d = 4,\\
	&C\e^{\frac{1}{4} + \frac{3}{4(d-1)} - \sigma }\norm{u_0}_{H^2(\Omega)} \qquad &\text{if } d \ge 5.
	\endaligned
	\right.
	\end{equation*}
	
	(ii) If $d\ge 5$ and $u_0\in H^2 \cap W^{1,d}(\Omega;\RM)$,
	\begin{equation}\label{est.weH1.5d}
	\norm{w_\e}_{H^1(\Omega)} \le C\e^{\frac{1}{2} -\sigma} \big( \norm{u_0}_{W^{1,d}(\Omega)} + \norm{u_0}_{H^2(\Omega)} \big).
	\end{equation}
\end{theorem}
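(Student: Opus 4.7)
The plan is to test the variational identity~(\ref{eq.splitInt}) against $\varphi = w_\e$ and turn it into a coercive energy estimate. The starting observation is that both (\ref{cond.ellipticity}) and (\ref{cond.elliptic.b}) hold, so a standard Poincaré-type argument incorporating the boundary trace gives
\begin{equation*}
\int_\Omega A_\e \nabla w_\e \cdot \nabla w_\e + \int_{\partial\Omega} b_\e w_\e \cdot w_\e \;\geq\; c\,\norm{w_\e}_{H^1(\Omega)}^2
\end{equation*}
with $c>0$ depending only on $\mu$ and $\Omega$. Thus the left-hand side of~(\ref{eq.splitInt}) with $\varphi = w_\e$ dominates $c\norm{w_\e}_{H^1(\Omega)}^2$, and it remains to estimate the four terms $I_1,\dots,I_4$ on the right.

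For the interior terms, (\ref{est.I123}) with $\varphi=w_\e$ gives
\begin{equation*}
|I_1|+|I_2|+|I_3| \;\leq\; C\e^{\frac12}\norm{u_0}_{H^2(\Omega)}\bigl(\e^{\frac12}\norm{\nabla w_\e}_{L^2(\Omega)} + \norm{\nabla w_\e}_{L^2(\Omega_{2\e})}\bigr),
\end{equation*}
and Young's inequality (absorbing $\tfrac{c}{4}\norm{w_\e}_{H^1(\Omega)}^2$ into the coercive term) produces a contribution of order $\e^{1/2}\norm{u_0}_{H^2(\Omega)}$, with no $\sigma$ loss. For $I_4$ I would split it as in~(\ref{eq.I4}). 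The $M_\e$ piece is handled by~(\ref{est.Meu0}), which yields at most $C\e\,\norm{u_0}_{H^1(\Omega)}\norm{w_\e}_{H^1(\Omega)}$, a strictly higher-order contribution. The oscillatory piece $I_{\rm osc}$ is then controlled directly by Lemma~\ref{lem.Iosc.H1}: for part~(i) I apply Lemma~\ref{lem.Iosc.H1}(ii) to get the rates $\e^{2/3-\sigma}$, $\e^{1/2-\sigma}$, $\e^{1/4+3/(4(d-1))-\sigma}$ in dimensions $3,\,4,\,\geq 5$ respectively, while for part~(ii) I use Lemma~\ref{lem.Iosc.H1}(i) to obtain $\e^{1/2-\sigma}\norm{u_0}_{W^{1,d}(\Omega)}$.

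Combining these ingredients and dividing by $\norm{w_\e}_{H^1(\Omega)}$ (after a final application of Young's inequality) gives the claimed bounds. The dimension-dependent rate in the theorem is just a matter of selecting the slower of the two contributions: for $d=3$, $\e^{1/2}$ from $I_1+I_2+I_3$ dominates $\e^{2/3-\sigma}$ from $I_{\rm osc}$ once $\sigma<1/6$, so the clean $\e^{1/2}$ bound survives; for $d=4$, the two contributions match and the $\sigma$-loss is inherited from $I_{\rm osc}$; for $d\ge 5$ with only $H^2$-regularity, $\e^{1/4+3/(4(d-1))-\sigma}<\e^{1/2}$ so the boundary term determines the rate; and for $d\ge 5$ with the stronger $W^{1,d}$ hypothesis the boundary term improves to $\e^{1/2-\sigma}$, matching $I_1+I_2+I_3$ and giving~(\ref{est.weH1.5d}).

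There is no real conceptual obstacle, since all hard analytic work (the interior first-order expansion estimates, the $v_\e$ bounds of Section~4, and the duality estimate of $I_{\rm osc}$) has been done earlier; the only delicate point is the careful bookkeeping that identifies the dominant term in each dimension and verifies that the $M_\e$-contribution and the $I_{\rm osc}$-contribution in low dimensions are both strictly better than~$\e^{1/2}$, so that no spurious $\sigma$-loss appears in the $d=3$ case.
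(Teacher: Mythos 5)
Your proposal is correct and follows essentially the same path as the paper: test the variational identity (\ref{eq.splitInt}) against $\varphi = w_\e$, apply (\ref{est.I123}), (\ref{est.Meu0}), and Lemma \ref{lem.Iosc.H1}, and close with the coercivity of the form $\int_\Omega A_\e \nabla w_\e\cdot\nabla w_\e + \int_{\partial\Omega} b_\e w_\e\cdot w_\e$ (the paper expresses this via $\norm{w_\e}_{H^1(\Omega)} \simeq \norm{\nabla w_\e}_{L^2(\Omega)} + \norm{w_\e}_{L^2(\partial\Omega)}$ plus H\"older rather than your Young's-inequality phrasing, but the content is identical). Your dimension-by-dimension bookkeeping of which contribution dominates also matches the paper's reasoning.
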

\begin{proof}
	(i) In view of (\ref{eq.splitInt}), (\ref{est.I123}), (\ref{est.Meu0}) and Lemma \ref{lem.Iosc.H1} (ii), one has
		\begin{equation*}
			\begin{aligned}
			\bigg| \int_{\Omega} A_\e \nabla w_\e \cdot \nabla \varphi + \int_{\partial\Omega} b_\e w_\e \cdot \varphi \bigg| &\le C\norm{u_0}_{H^2(\Omega)} \big( \e \norm{\varphi}_{H^1(\Omega)} + \e^{\frac{1}{2}} \norm{\nabla \varphi}_{L^2(\Omega_{2\e})} \big) \\
			& \qquad + \left\{
			\aligned
			&C\e^{\frac{2}{3}- \sigma}\norm{u_0}_{H^2(\Omega)} \norm{\varphi}_{H^1(\Omega)} \qquad &\text{if } d =3, \\
			&C\e^{\frac{1}{2}-\sigma}\norm{u_0}_{H^2(\Omega)} \norm{\varphi}_{H^1(\Omega)} \qquad &\text{if } d = 4,\\
			&C\e^{\frac{1}{4} + \frac{3}{4(d-1)} - \sigma }\norm{u_0}_{H^2(\Omega)} \norm{\varphi}_{H^1(\Omega)} \qquad &\text{if } d \ge 5.
			\endaligned
			\right.
			\end{aligned}
		\end{equation*}
	Now, choosing $\varphi = w_\e$ in the above inequality and using (\ref{cond.ellipticity}) and (\ref{cond.elliptic.b}), we obtain
	\begin{equation*}
	\begin{aligned}
	\norm{\nabla w_\e}_{L^2(\Omega)}^2 + \norm{w_\e}_{L^2(\partial\Omega)}^2 \le \left\{
	\aligned
	&C\e^{\frac{1}{2}}\norm{u_0}_{H^2(\Omega)} \norm{w_\e}_{H^1(\Omega)} \qquad &\text{if } d =3, \\
	&C\e^{\frac{1}{2}-\sigma}\norm{u_0}_{H^2(\Omega)} \norm{w_\e}_{H^1(\Omega)} \qquad &\text{if } d = 4,\\
	&C\e^{\frac{1}{4} + \frac{3}{4(d-1)} - \sigma }\norm{u_0}_{H^2(\Omega)} \norm{w_\e}_{H^1(\Omega)} \qquad &\text{if } d \ge 5.
	\endaligned
	\right.
	\end{aligned}
	\end{equation*}
	The desired estimate follows from a simple observation $\norm{w_\e}_{H^1(\Omega)} \simeq \norm{\nabla w_\e}_{L^2(\Omega)} + \norm{w_\e}_{L^2(\partial\Omega)}$ and the H\"{o}lder inequality.

	(ii) The estimate (\ref{est.weH1.5d}) follows from Lemma \ref{lem.Iosc.H1} (i) and a similar argument.
\end{proof}

\begin{remark}\label{rmk.u0H2}
	Lemma \ref{lem.Iosc.H1} and Theorem \ref{thm.we.H1} shows that the estimate of $I_{\text{osc}}$ and the convergence rates of $w_\e$ are very sensitive to the regularity of $u_0$. For instance, $u_0\in H^2(\Omega;\RM)$ implies that $u_0\in W^{1,\frac{2d}{d-2}}(\Omega;\RM)$, which gives worse integrability as $d$ increases ($d\ge 5$) and hence causes lower convergence rates. On the other hand, $u_0 \in W^{1,d}(\Omega;\RM)$ is a very natural condition to guarantee good convergence rates for $d\ge 5$.
\end{remark}

\begin{lemma}\label{lem.5d}
		Assume $d\ge 5$. Let $\varphi \in H^2(\Omega;\RD)$ and $u_0\in W^{2,\frac{d}{2}}(\Omega;\RM)$.
	Then	
	\begin{equation*}
	\bigg| \int_{\Omega} A_\e \nabla w_\e \cdot \nabla \varphi + \int_{\partial\Omega} b_\e w_\e \cdot \varphi \bigg| \le C\e^{1-\sigma} \norm{u_0}_{W^{2,\frac{d}{2}}(\Omega)} \norm{\varphi}_{H^2(\Omega)},
	\end{equation*}
	for any $\sigma>0$.
\end{lemma}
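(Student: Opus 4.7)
The plan is to return to the decomposition (\ref{eq.splitInt})--(\ref{eq.I4}) and re-estimate each of the four pieces $I_1,I_2,I_3,I_4$ under the stronger hypotheses $\varphi\in H^2$ and $u_0\in W^{2,d/2}$. Since $d\ge 5$ implies the continuous embedding $W^{2,d/2}(\Omega)\hookrightarrow H^2(\Omega)$, the ``familiar'' interior pieces $I_1,I_2,I_3$ and the $M_\e$ remainder in $I_4$ need no new idea; all of the difficulty concentrates in the oscillatory boundary integral
\[
I_{\rm osc}=\int_{\partial\Omega}(\overline{b}-b(x/\e)-M_\e)\,u_0\cdot\varphi\,d\sigma.
\]

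For $I_1,I_2,I_3$ I would start from (\ref{est.I123}) and upgrade the thin-shell factor by the standard trace-type estimate
\[
\norm{\nabla\varphi}_{L^2(\Omega_{2\e})}\le C\e^{1/2}\bigl(\norm{\nabla\varphi}_{L^2(\partial\Omega)}+\e^{1/2}\norm{\nabla^2\varphi}_{L^2(\Omega)}\bigr)\le C\e^{1/2}\norm{\varphi}_{H^2(\Omega)},
\]
which is available now that $\varphi\in H^2$. This turns (\ref{est.I123}) into $|I_1|+|I_2|+|I_3|\le C\e\norm{u_0}_{H^2}\norm{\varphi}_{H^2}$, already better than the target. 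The $M_\e$ contribution in (\ref{eq.I4}) is even smaller: by (\ref{est.Me}) and $(d-1)/2\ge 2$ for $d\ge 5$, it is controlled by $C\e^2\norm{u_0}_{L^2(\partial\Omega)}\norm{\varphi}_{L^2(\partial\Omega)}$.

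For $I_{\rm osc}$ the key is to exploit the extra derivative in $\varphi$ by integrating by parts \emph{twice} against the auxiliary $v_\e$ of (\ref{eq.ve.b}), rather than using the Dirichlet-extension duality of Lemma \ref{lem.Iosc.H1}. Since $\mathrm{div}(\widehat A\nabla v_\e)=0$, this yields
\[
I_{\rm osc}=\int_{\partial\Omega} v_\e\cdot\bigl(n\cdot\widehat A^{*}\nabla(u_0\varphi)\bigr)d\sigma-\int_{\Omega} v_\e\cdot\mathrm{div}\bigl(\widehat A^{*}\nabla(u_0\varphi)\bigr)dx.
\]
For the boundary piece I would invoke (\ref{est.ve34d}), which in the regime $d\ge 5$ gives $\norm{v_\e}_{L^2(\partial\Omega)}\le C\e^{1-\sigma}$, paired with a Sobolev trace bound on $\nabla(u_0\varphi)$. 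For the interior piece I would use (\ref{est.veLp.improved}) at an exponent $p$ slightly larger than $2$ to obtain $\norm{v_\e}_{L^p(\Omega)}\le C\e^{5/4-\sigma}$, paired with the $L^{p'}$ estimate on $\mathrm{div}(\widehat A^{*}\nabla(u_0\varphi))$. The troublesome summand $u_0\nabla^2\varphi$ — the one that prevents $u_0\varphi$ from being in $H^2$ when $u_0$ lies only in the critical space $W^{2,d/2}$ — is still in $L^{2-\sigma''}(\Omega)$ because $u_0\in L^q(\Omega)$ for every finite $q$, and this is more than enough room against $v_\e\in L^{2+\sigma'}(\Omega)$. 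The interior piece is thus $O(\e^{5/4-\sigma})$, well below the target.

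The main obstacle, and the only one requiring careful bookkeeping, is verifying
\[
\norm{\nabla(u_0\varphi)}_{L^2(\partial\Omega)}\le C\norm{u_0}_{W^{2,d/2}(\Omega)}\norm{\varphi}_{H^2(\Omega)}.
\]
I would split $\nabla(u_0\varphi)=\varphi\nabla u_0+u_0\nabla\varphi$ and feed each summand into Sobolev-trace chains: $\nabla u_0\in W^{1,d/2}(\Omega)\hookrightarrow L^{d-1}(\partial\Omega)$ and $\varphi\in H^{3/2}(\partial\Omega)\hookrightarrow L^{2(d-1)/(d-4)}(\partial\Omega)$ handle the first summand by H\"older (both factors land in $L^2(\partial\Omega)$ with the correct norms); the borderline chain $u_0\in W^{2,d/2}(\Omega)\hookrightarrow W^{1,d}(\Omega)\hookrightarrow L^q(\partial\Omega)$ for every finite $q$, together with $\nabla\varphi\in H^1(\Omega)\hookrightarrow L^{2(d-1)/(d-2)}(\partial\Omega)$, handles the second summand the same way. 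Summing the boundary ($\e^{1-\sigma}$), interior ($\e^{5/4-\sigma}$), familiar ($\e$) and $M_\e$ ($\e^2$) contributions then yields the desired inequality.
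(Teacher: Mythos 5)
Your proposal is correct and follows essentially the same route as the paper: the oscillatory term $I_{\rm osc}$ is handled by the same duality with the auxiliary Neumann solution $v_\e$ of (\ref{eq.ve.b}), the same double integration by parts producing an interior piece (paper's $J_1$) and a boundary piece (paper's $J_2+J_3$), and the same inputs (\ref{est.veLp.improved}) and (\ref{est.ve34d}) for $d\ge 5$. The only minor difference is packaging — you pair $v_\e\in L^2(\partial\Omega)$ against $\nabla(u_0\varphi)\in L^2(\partial\Omega)$ with direct Sobolev-trace bookkeeping, while the paper uses $v_\e\in L^{2+\sigma}(\partial\Omega)$ against $W^{1,2-\sigma_1}(\Omega)$ bulk norms and the trace theorem — and both exponent choices close.
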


\begin{proof}
	As before, we still have (\ref{eq.splitInt}) and (\ref{est.I123}), which yields
	\begin{equation}\label{est.I123.H2}
	|I_1| + |I_2| + |I_3| \le C\e \norm{u_0}_{H^2(\Omega)} \norm{\varphi}_{H^2(\Omega)}, \quad \text{for any } \varphi\in H^2(\Omega;\RM).
	\end{equation}
	Therefore, the main difficulty is to estimate
	\begin{equation*}
	I_{\text{osc}} = \int_{\partial\Omega} (\overline{b}^{\alpha\beta} - b^{\alpha\beta}(x/\e) - M_\e^{\alpha\beta}) u_0^\beta \cdot \varphi^\alpha.
	\end{equation*}
	By the equations of $v_\e^\beta$ in (\ref{eq.ve.b}) and the integration by parts, we have
	\begin{equation*}
	\begin{aligned}
	I_{\text{osc}}
	& = \int_{\partial\Omega} n\cdot \widehat{A}^{\alpha\gamma} \nabla v_\e^{\gamma\beta} u_0^\beta \cdot \varphi^\alpha \\
	& = \int_{\Omega} \nabla v_\e^{\gamma\beta}\cdot \widehat{A}^{*\gamma\alpha}\nabla( u_0^\beta \varphi^\alpha) \\
	& = - \int_{\Omega} v_\e^{\gamma\beta} \cdot \text{div}( \widehat{A}^{*\gamma\alpha}\nabla (u_0^\beta \varphi^\alpha) ) + \int_{\partial\Omega} v_\e^{\gamma\beta} (n\cdot \widehat{A}^{*\gamma\alpha}\nabla u_0^\beta) \varphi^\alpha + \int_{\partial\Omega} v_\e^{\gamma\beta}  u_0^\beta (n\cdot \widehat{A}^{*\gamma\alpha}\nabla \varphi^\alpha) \\
	& = J_1 + J_2 + J_3.
	\end{aligned}
	\end{equation*}
	
	To handle $J_1$, by (\ref{est.veLp.improved}) and the Sobolev inequality, we have
	\begin{equation}\label{est.J1.H2}
	\begin{aligned}
	|J_1| &\le C\norm{v_\e}_{L^{2+\sigma}(\Omega)} \norm{u_0}_{W^{2,\frac{d}{2}}(\Omega)} \norm{\varphi}_{H^2(\Omega)} \\
	& \le C\e^{1-\sigma}\norm{u_0}_{W^{2,\frac{d}{2}}(\Omega)} \norm{\varphi}_{H^2(\Omega)},
	\end{aligned}
	\end{equation}
	provided $d\ge 4$.
	On the other hand, if $d\ge 5$, (\ref{est.ve34d}) and the trace theorem implies
	\begin{equation}\label{est.J23.H2}
	\begin{aligned}
	|J_2| + |J_3| & \le C\norm{v_\e}_{L^{2+\sigma}(\partial\Omega)} \big( \norm{\nabla u_0 \varphi}_{W^{1,2-\sigma_1}(\Omega)} + \norm{ u_0 \nabla\varphi}_{W^{1,2-\sigma_1}(\Omega)} \big) \\
	& \le C\e^{1-\sigma} \norm{u_0}_{W^{2,\frac{d}{2}}(\Omega)} \norm{\varphi}_{H^2(\Omega)},
	\end{aligned}
	\end{equation}
	where $\sigma,\sigma_1$ are small and $\frac{1}{2+\sigma} + \frac{1}{2-\sigma_1} = 1$.
	This finishes the proof by combining (\ref{est.J1.H2}), (\ref{est.J23.H2}) and (\ref{est.I123.H2}).
\end{proof}

\begin{lemma}\label{lem.34d}
	Assume $d= 3$ or $4$. Let $u_0\in H^2\cap W^{1,\infty}(\Omega;\RM)$. Let $\varphi \in H^2(\Omega;\RD)$ be the weak solution of
			\begin{equation}\label{eq.varphi.G}
			\left\{
			\aligned
			-{\rm div}(\widehat{A}^* \nabla \varphi) & = G \quad &\text{ in }& \Omega,\\
			n\cdot \widehat{A}^*\nabla \varphi + \bar{b}^* \varphi & =0 \quad &\text{ on }& \partial\Omega.
			\endaligned
			\right.
			\end{equation}
	Then	
	\begin{equation*}
	\bigg| \int_{\Omega} A_\e \nabla w_\e \cdot \nabla \varphi + \int_{\partial\Omega} b_\e w_\e \cdot \varphi \bigg| \le  \left\{ 
	\begin{aligned}
	 &C\e^{\frac{7}{8}-\sigma} \big(\norm{u_0}_{W^{1,\infty}(\Omega)} + \norm{\nabla^2 u_0}_{L^2(\Omega)}\big) \norm{\varphi}_{H^2(\Omega)},\\
	 &C\e^{1-\sigma} \big(\norm{u_0}_{W^{1,\infty}(\Omega)} + \norm{\nabla^2 u_0}_{L^2(\Omega)}\big) \norm{\varphi}_{H^2(\Omega)},
	 \end{aligned}
	 \right.
	\end{equation*}
	for any $\sigma>0$.
\end{lemma}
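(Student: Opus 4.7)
The plan is to imitate the scheme of Lemma~\ref{lem.5d}, starting from the decomposition (\ref{eq.splitInt}) into $I_1+I_2+I_3+I_4$, splitting $I_4 = I_{\text{osc}} + \int_{\partial\Omega} M_\e u_0\cdot \varphi$ via (\ref{eq.I4}), and then analyzing $I_{\text{osc}}$ by the duality identity (\ref{eq.osc.dual}) with the auxiliary Neumann solution $v_\e$ of (\ref{eq.ve.b}). The bound (\ref{est.I123}) together with the boundary-layer estimate $\norm{\nabla \varphi}_{L^2(\Omega_{2\e})} \le C\e^{1/2}\norm{\varphi}_{H^2}$ (which follows from $\varphi \in H^2$ by a standard trace-by-slicing argument) controls $I_1+I_2+I_3$ by $C\e\norm{u_0}_{H^2}\norm{\varphi}_{H^2}$; the $M_\e$-piece is $O(\e^{(d-1)/2})$ by (\ref{est.Me}) and is absorbed.

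Integrating by parts twice in $I_{\text{osc}}$ and using the equation and boundary data of $v_\e$ expresses $I_{\text{osc}} = J_1+J_2+J_3$ with $J_1 = -\int_\Omega v_\e\cdot \text{div}(\widehat{A}^*\nabla(u_0\varphi))$, $J_2 = \int_{\partial\Omega} v_\e(n\cdot \widehat{A}^*\nabla u_0)\varphi$, and $J_3 = \int_{\partial\Omega} v_\e u_0(n\cdot \widehat{A}^*\nabla \varphi)$. The key new ingredient here, absent in Lemma~\ref{lem.5d}, is that $\varphi$ satisfies the adjoint Robin condition $n\cdot \widehat{A}^*\nabla\varphi = -\bar{b}^*\varphi$ on $\partial\Omega$, which converts $J_3$ into the pure boundary product $-\int_{\partial\Omega} v_\e u_0 \bar{b}^*\varphi$, so no outward normal derivative of $\varphi$ ever appears in the final estimate.

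For $d=3$ the Sobolev embedding $H^2(\Omega) \hookrightarrow L^\infty(\Omega)$ is available (and by trace $H^2(\Omega) \hookrightarrow L^\infty(\partial\Omega)$), so I would estimate $J_2$ and $J_3$ in $L^1(\partial\Omega)$ against $L^\infty(\partial\Omega)$, giving each $\le C\norm{v_\e}_{L^1(\partial\Omega)}\norm{u_0}_{W^{1,\infty}}\norm{\varphi}_{H^2} \le C\e^{1-\sigma}\norm{u_0}_{W^{1,\infty}}\norm{\varphi}_{H^2}$ via (\ref{est.ve34d}). For the bulk term $J_1$ I would pair $v_\e$ with $\text{div}(\widehat{A}^*\nabla(u_0\varphi))$ in $L^2(\Omega)$: distributing derivatives and using $u_0,\nabla u_0\in L^\infty$, $\nabla^2 u_0\in L^2$, and $\varphi\in L^\infty$, one obtains $\norm{\text{div}(\widehat{A}^*\nabla(u_0\varphi))}_{L^2}\le C(\norm{u_0}_{W^{1,\infty}}+\norm{\nabla^2 u_0}_{L^2})\norm{\varphi}_{H^2}$, so by $\norm{v_\e}_{L^2(\Omega)}\le C\e^{7/8-\sigma}$ from (\ref{est.veLp.improved}) we obtain $|J_1|\le C\e^{7/8-\sigma}(\norm{u_0}_{W^{1,\infty}}+\norm{\nabla^2 u_0}_{L^2})\norm{\varphi}_{H^2}$. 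This exponent is the bottleneck and yields the first line of the claim.

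For $d=4$ the embedding $H^2\hookrightarrow L^\infty$ is critical and fails, so the plan is to perturb the exponents by $\sigma$: pair $v_\e$ in $L^{2+\sigma}(\Omega)$, which is $O(\e^{1-\sigma})$ by (\ref{est.veLp.improved}), against $\text{div}(\widehat{A}^*\nabla(u_0\varphi))$ in the dual $L^{(2+\sigma)'}(\Omega)$; the only delicate term $\varphi\nabla^2 u_0$ is handled by H\"older with $\nabla^2 u_0\in L^2$ and $\varphi\in L^q$ for any $q<\infty$, which holds for $H^2$ in $d=4$. Analogously, $J_2$ and $J_3$ are treated by $L^{1+\sigma}(\partial\Omega)$ against $L^{(1+\sigma)'}(\partial\Omega)$ duality, using trace from $H^2(\Omega)$ into $L^p(\partial\Omega)$ for every $p<\infty$; as $\sigma\to 0$ each contribution approaches $\e^{1-\sigma}$, matching the second line. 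The principal obstacle is exactly this $d=4$ criticality, which is responsible for the unavoidable $\sigma$-loss in the statement.
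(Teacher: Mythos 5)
Your proposal is correct and follows essentially the same route as the paper: you reduce to the decomposition $I_1+\cdots+I_4$, isolate $I_{\rm osc}=J_1+J_2+J_3$ via the duality with the auxiliary Neumann solution $v_\e$, use the Robin boundary condition of $\varphi$ to eliminate the normal derivative in $J_3$, and then invoke the dimension-dependent $L^p(\Omega)$ and $L^p(\partial\Omega)$ estimates of $v_\e$ (pairing in $L^2$ for $d=3$, giving the $\e^{7/8-\sigma}$ bottleneck from $J_1$, and perturbing exponents by $\sigma$ for the critical case $d=4$). The only cosmetic difference is that the paper invokes the intermediate bound (\ref{est.J1.H2}) from Lemma~\ref{lem.5d} rather than re-deriving the $L^2$ duality for $J_1$, but the underlying estimates are identical.
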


\begin{proof}
	The proof follows a line of Lemma \ref{lem.5d}. Note that (\ref{est.J1.H2}) still holds if $d =4$. If $d = 3$, it is replaced by
	\begin{equation*}
	|J_1| \le C\e^{\frac{7}{8} - \sigma} \norm{u_0}_{H^2(\Omega)} \norm{\varphi}_{H^2(\Omega)},
	\end{equation*}
	thanks to (\ref{est.veL2.sharp}).
	
	The estimates for $J_2$ and $J_3$ will be sightly different since we only have a better rate for $\norm{v_\e}_{L^1(\partial\Omega)}$ if $d = 3$ or $4$, instead of $\norm{v_\e}_{L^2(\partial\Omega)}$. Precisely, if $d = 3$, the Sobolev embedding theorem implies that $\norm{\varphi}_{L^\infty(\Omega)} \le C\norm{\varphi}_{H^2(\Omega)}$. Thus, (\ref{est.ve34d}) implies
	\begin{equation*}
	|J_2| \le C\norm{v_\e}_{L^1(\partial\Omega)} \norm{\nabla u_0}_{L^\infty(\partial\Omega)} \norm{\varphi}_{L^\infty(\partial\Omega)} \le C\e^{1-\sigma} \norm{\nabla u_0}_{L^\infty(\partial\Omega)} \norm{\varphi}_{H^2(\Omega)}.
	\end{equation*}
	For $J_3$, we take advantage of the boundary condition of $\varphi$ in (\ref{eq.varphi.G}) and obtain
	\begin{equation*}
	J_3 = -\int_{\partial\Omega} v_\e^{\gamma\beta}  u_0^\beta (\bar{b}^* \varphi)^\gamma,
	\end{equation*}
	which yields
	\begin{equation*}
	|J_3| \le C\e^{1-\sigma} \norm{ u_0}_{L^\infty(\partial\Omega)} \norm{\varphi}_{H^2(\Omega)}.
	\end{equation*}
	
	If $d=4$, $\varphi$ is not bounded. Nevertheless, we still have
	\begin{equation*}
	\norm{\varphi}_{L^q(\partial\Omega)} \le C\norm{\varphi}_{H^2(\Omega)}, \qquad \text{for any } q<\infty.
	\end{equation*}
	Consequently,
	\begin{equation*}
	\begin{aligned}
		|J_2| + |J_3| &\le C\norm{v_\e}_{L^{q'}(\partial\Omega)} \norm{u_0}_{W^{1,\infty}(\partial\Omega)} \norm{\varphi}_{L^q(\partial\Omega)}\\
		& \le C\e^{1-\sigma} \norm{u_0}_{W^{1,\infty}(\partial\Omega)} \norm{\varphi}_{H^2(\Omega)},
	\end{aligned}
	\end{equation*}
	where in the last inequality, we have chosen $q$ sufficiently large (hence, $q'$ is sufficiently close to $1$). Note that $\norm{v_\e}_{L^{q'}} \le C\e^{1-\sigma}$ if $q'$ is close enough to $1$, due to (\ref{est.ve34d}). These complete the proof.
\end{proof}

Now, we are in a position to prove Theorem \ref{thm.L2Rate}.

\begin{proof}[Proof of Theorem \ref{thm.L2Rate}]
	The proof is based on a duality argument. For a given function $G\in L^2(\Omega;\RM)$, let $h_\e$ be the solution of
	\begin{equation*}
	\left\{
	\aligned
	-\text{div}( A^*(x/\e)\nabla h_\e) & = G \quad &\text{ in }& \Omega,\\
	n\cdot A^*(x/\e) \nabla h_\e + b^*(x/\e) h_\e & =0 \quad &\text{ on }& \partial\Omega.
	\endaligned
	\right.
	\end{equation*}
	Let $h_0$ be the corresponding homogenized equation, namely,
	\begin{equation}\label{eq.h0.Neumann}
	\left\{
	\aligned
	-\text{div}( \widehat{A}^* \nabla h_0) & = G \quad &\text{ in }& \Omega,\\
	n\cdot \widehat{A}^* \nabla h_0 + \bar{b}^* h_0 & =0 \quad &\text{ on }& \partial\Omega.
	\endaligned
	\right.
	\end{equation}
	Since $\Omega$ is smooth, the $H^2$ regularity theory for (\ref{eq.h0.Neumann}) implies $h_0\in H^2(\Omega;\RM)$ and $\norm{h_0}_{H^2(\Omega)} \le C\norm{G}_{L^2(\Omega)}$. It follows from Theorem \ref{thm.we.H1} (i) that
	\begin{equation}\label{est.he.H1}
	\norm{h_\e - h_0 - \e \chi(x/\e) S_\e( \eta_\e\nabla h_0) }_{H^1(\Omega)} \le \left\{
	\aligned
	&C\e^{\frac{1}{2}}\norm{h_0}_{H^2(\Omega)} \qquad &\text{if } d =3, \\
	&C\e^{\frac{1}{2}-\sigma}\norm{h_0}_{H^2(\Omega)}  \qquad &\text{if } d = 4,\\
	&C\e^{\frac{1}{4} + \frac{3}{4(d-1)} - \sigma }\norm{h_0}_{H^2(\Omega)} \qquad &\text{if } d \ge 5.
	\endaligned
	\right.
	\end{equation}
	
	Now, let $w_\e$ be defined as (\ref{eq.we}) and consider
	\begin{equation*}
	\begin{aligned}
	\int_{\Omega} w_\e \cdot G &= \int_{\Omega} A_\e \nabla w_\e \cdot \nabla h_\e - \int_{\partial\Omega} w_\e\cdot (n\cdot A^*(x/\e) \nabla h_\e) d\sigma \\
	& = \int_{\Omega} A_\e \nabla w_\e \cdot \nabla ( h_\e - h_0 - \e \chi(x/\e) S_\e( \eta_\e\nabla h_0) ) \\
	& \qquad + \int_{\partial\Omega}  w_\e \cdot b^*(x/\e) (h_\e - h_0) \cdot \\
	& \qquad + \int_{\Omega} A_\e \nabla w_\e \cdot \nabla h_0 + \int_{\partial\Omega} b(x/\e) w_\e \cdot h_0 d\sigma \\
	& \qquad + \int_{\Omega} A_\e \nabla w_\e \cdot \nabla (\e \chi(x/\e) S_\e( \eta_\e\nabla h_0) ) \\
	&= K_1 + K_2 + (K_3+K_4) + K_5.
	\end{aligned}
	\end{equation*}
	
	To estimate $K_1$, we use Theorem \ref{thm.we.H1}  and (\ref{est.he.H1}) to obtain
	\begin{equation*}
	\begin{aligned}
	|K_1| & \le \norm{w_\e}_{H^1(\Omega)} \norm{h_\e - h_0 - \e \chi(x/\e) S_\e( \eta_\e\nabla h_0) }_{H^1(\Omega)} \\
	& \le \left\{
	\aligned
	&C\e \norm{u_0}_{H^2(\Omega)} \norm{h_0}_{H^2(\Omega)} \qquad &\text{if } d =3, \\
	&C\e^{1-\sigma}\norm{u_0}_{H^2(\Omega)} \norm{h_0}_{H^2(\Omega)}  \qquad &\text{if } d = 4,\\
	&C\e^{\frac{3}{4} + \frac{3}{4(d-1)} - \sigma } \big( \norm{u_0}_{W^{1,d}(\Omega)} + \norm{u_0}_{H^2(\Omega)} \big) \norm{h_0}_{H^2(\Omega)} \qquad &\text{if } d \ge 5.
	\endaligned
	\right.
	\end{aligned}
	\end{equation*}
	To estimate $K_2$, note that $\e \chi(x/\e) S_\e( \eta_\e\nabla h_0)$ vanishes on the boundary. Then, $K_2$ has the same bound as $K_1$, by the trace theorem.

	Next, using Lemma \ref{lem.5d} and Lemma \ref{lem.34d}, we bound $K_3+K_4$ by
	\begin{equation*}
	\begin{aligned}
	|K_3 + K_4| &= \bigg| \int_{\Omega} A_\e \nabla w_\e \cdot \nabla h_0 + \int_{\partial\Omega} b(x/\e) w_\e \cdot h_0 d\sigma \bigg| \\
	&\le \left\{
	\aligned
	&C\e^{\frac{7}{8} - \sigma } \big( \norm{u_0}_{W^{1,\infty}(\Omega)} + \norm{u_0}_{H^2(\Omega)} \big) \norm{h_0}_{H^2(\Omega)} \qquad &\text{if } d =3, \\
	&C\e^{1-\sigma}\big( \norm{u_0}_{W^{1,\infty}(\Omega)} + \norm{u_0}_{H^2(\Omega)} \big) \norm{h_0}_{H^2(\Omega)}  \qquad &\text{if } d = 4,\\
	&C\e^{1 - \sigma } \norm{u_0}_{W^{2,\frac{d}{2}}(\Omega)} \norm{h_0}_{H^2(\Omega)} \qquad &\text{if } d \ge 5.
	\endaligned
	\right.
	\end{aligned}
	\end{equation*}
	
	Finally, to estimate $K_5$, we use the fact that $\e \chi(x/\e) S_\e( \eta_\e\nabla h_0)$ vanishes on a boundary layer with thickness $2\e$, if we choose the cut-off function $\eta_\e$ appropriately. In view of (\ref{eq.splitInt}) and (\ref{est.I123}), we have
	\begin{equation*}
	\begin{aligned}
	|K_5| & = \bigg| \int_{\Omega} A_\e \nabla w_\e \cdot \nabla (\e \chi(x/\e) S_\e( \eta_\e\nabla h_0) ) + \int_{\partial\Omega} b(x/\e) w_\e\cdot \e \chi(x/\e) S_\e( \eta_\e\nabla h_0) d\sigma| \\
	& \le C\e^{\frac{1}{2}} \norm{u_0}_{H^2(\Omega)} \big( \e^{\frac{1}{2}} \norm{\nabla (\e \chi(x/\e) S_\e( \eta_\e\nabla h_0))}_{L^2(\Omega)} + \norm{\nabla (\e \chi(x/\e) S_\e( \eta_\e\nabla h_0))}_{L^2(\Omega_{2\e})} \big) \\
	& = C\e \norm{u_0}_{H^2(\Omega)} \norm{\nabla (\e \chi(x/\e) S_\e( \eta_\e\nabla h_0))}_{L^2(\Omega)} \\
	& \le C\e \norm{u_0}_{H^2(\Omega)} \norm{h_0}_{H^2(\Omega)},
	\end{aligned}
	\end{equation*}
	where in the last inequality, we have used a property of $S_\e$; see \cite[Proposition 3.1.5]{ShenBook} or \cite[Lemma 2.1]{Shen17}.
	
	As a result, we arrive at
	\begin{equation}\label{est.we.G}
	\bigg| \int_{\Omega} w_\e \cdot G \bigg| \le \left\{
	\aligned
	&C\e^{\frac{7}{8} - \sigma } \big( \norm{u_0}_{W^{1,\infty}(\Omega)} + \norm{u_0}_{H^2(\Omega)} \big) \norm{h_0}_{H^2(\Omega)} \qquad &\text{if } d =3, \\
	&C\e^{1-\sigma}\big( \norm{u_0}_{W^{1,\infty}(\Omega)} + \norm{u_0}_{H^2(\Omega)} \big) \norm{h_0}_{H^2(\Omega)}  \qquad &\text{if } d = 4,\\
	&C\e^{\frac{3}{4} + \frac{3}{4(d-1)} - \sigma } \norm{u_0}_{W^{2,\frac{d}{2}}(\Omega)} \norm{h_0}_{H^2(\Omega)} \qquad &\text{if } d \ge 5.
	\endaligned
	\right.
	\end{equation}
	Recall that $\norm{h_0}_{H^2(\Omega)} \le C\norm{G}_{L^2(\Omega)}$. Then, by duality, we see that $\norm{w_\e}_{L^2(\Omega)}$ is bounded by the right-hand side of (\ref{est.we.G}), which yields the desired estimate (\ref{est.L2Rates}), since $\e \chi(x/\e) S_\e( \eta_\e\nabla u_0)$ is clearly bounded by $C\e\norm{u_0}_{H^1(\Omega)}$.
\end{proof}

\begin{remark}\label{rmk.Lprate}
	In this paper, we focus only on the estimate of $\norm{u_\e - u_0}_{L^2(\Omega)}$. However, the sharp convergence rates may be obtained if we consider $\norm{u_\e - u_0}_{L^p(\Omega)}$ with some $p\in(1,2)$. For instance, if $d = 3$, (\ref{est.veLp.improved}) implies that $\norm{v_\e}_{L^\frac{3}{2}(\Omega)} \le C\e^{1-\sigma}$. This leads to the improvement for the estimate of $J_1$ in Lemma \ref{lem.34d} and eventually yields 
	\begin{equation*}
	\norm{u_\e - u_0}_{L^\frac{3}{2}(\Omega)} \le C\e^{1-\sigma} ( \norm{u_0}_{W^{1,\infty}(\Omega)} + \norm{u_0}_{H^2(\Omega)} ).
	\end{equation*}
	Similar results are also true for $d\ge 5$.
\end{remark}

\medskip

\noindent{\bf Acknowledgment.}
Jun Geng was Supported in part by the NNSF of China (no.11571152) and Fundamental
Research Funds for the Central Universities (lzujbky-2017-161). Jinping Zhuge is supported in part by National Science Foundation grant DMS-1600520. Both of the authors would like to thank Professor
Zhongwei Shen for helpful discussions and suggestions.

\bibliographystyle{abbrv}
\bibliography{mybib}

\bigskip

\begin{flushleft}
Jun Geng\\
School of Mathematics and Statistics,
Lanzhou University,
Lanzhou, P.R. China\\
E-mail:gengjun@lzu.edu.cn
\end{flushleft}

\begin{flushleft}
Jinping Zhuge\\
Department of Mathematics,
University of Kentucky,
Lexington, Kentucky, 40506,
USA.\\
E-mail: jinping.zhuge@uky.edu
\end{flushleft}

\noindent

\end{document}